\newtheorem{theorem}{Theorem} 
\newtheorem{proposition}[theorem]{Proposition}
\begin{document}

\begin{frontmatter}

\title{Scheduling on parallel machines with a common server in charge of loading and unloading operations}

\author[1]{Abdelhak Elidrissi\corref{mycorrespondingauthor}}
\cortext[mycorrespondingauthor]{Corresponding author}
\ead{abdelhak.elidrissi@uir.ac.ma}

\author[2]{Rachid Benmansour}

\author[3]{Keramat Hasani}

\author[3]{Frank Werner}

\address[1]{Rabat Business School, International University of Rabat, Rabat, Morocco}
\address[2]{Institut National de Statistique et d'Economie Appliquée (INSEA), Rabat, Morocco}
\address[3]{National University of Singapore, Singapore}
\address[3]{Fakult\"{a}t f\"{u}r Mathematik, Otto-von-Guericke-Universit\"{a}t Magdeburg,
PSF 4120, 39016 Magdeburg, Germany}

\begin{abstract}
This paper addresses the scheduling problem on two identical parallel machines with a single server in charge of loading and unloading operations of jobs. Each job has to be loaded by the server before being processed on one of the two machines and unloaded by the same server after its processing. No delay is allowed between loading and processing, and between processing and unloading. The objective function involves the minimization of the makespan.   This problem referred to
as $P2,S1|s_j, t_j|C_{max}$ generalizes the classical parallel machine scheduling problem with a single server which performs only the loading (i.e., setup) operation of each job. For this $\mathcal{NP}$-hard problem, no solution algorithm was proposed in the literature. Therefore, we present two mixed-integer linear programming (MILP) formulations, one with completion-time variables along with two valid inequalities and one with time-indexed variables. In addition, we propose some polynomial-time solvable cases and a tight theoretical lower bound. In addition, we show that the minimization of the makespan is equivalent to the minimization of the total idle times on the machines. To solve 
large-sized instances of the problem, an efficient General Variable Neighborhood Search (GVNS) metaheuristic with two mechanisms for finding an initial solution is designed. The GVNS is evaluated by comparing its performance with the results provided by the MILPs and another metaheuristic. The results show that the average percentage deviation from the theoretical lower-bound of GVNS is within 0.642\%. Some managerial insights are presented and our results are compared with the related literature. 

\end{abstract}

\begin{keyword}
Parallel machine scheduling \sep Single server \sep Loading operations \sep Unloading operations \sep  Mixed-integer linear program \sep General variable neighborhood search
\end{keyword}


\end{frontmatter}



\sloppy
\section{Introduction and literature review} \label{sec1}

Parallel machine scheduling problem with a single server (PMSSS problem) has received much attention over the last two decades. In the PMSSS problem, the server is in charge of the setup operation of the jobs. This setup operation can be defined as the time required to prepare the necessary resource (e.g., machines, people) to perform a task (e.g., job, operation) (\cite{allahverdi2008significance, bektur2019mathematical, hamzadayi2017modeling, kim2012mip}). Indeed, in the classical parallel machine scheduling problem, it is assumed that the jobs are to be executed without prior setup. However, this assumption is not always satisfied in practice, where industrial systems are more flexible (e.g., flexible manufacturing system). Under certain conditions, this assumption can lead also to a shortfall and/or waste of time. In addition, in the PMSSS problem, it is assumed that after the loading and processing operations, the job is automatically removed from the machine and no unloading operation is considered.

The PMSSS problem has many industrial applications.  In network computing, the network server sets up the workstations by loading the required software. In production applications, the setting up of machines involves the simultaneous use of a common resource which might be a robot or a human operator attending each setup (\cite{bektur2019mathematical}). In automated material handling systems, robotic cells or in the semiconductor industry (\cite{kim2012mip}), it is necessary to share a common server, for example a robot, by a number of machines to carry out the machine setups. Then the job processing is executed automatically and independently by the individual machines. 

The literature regarding the problem PMSSS can be classified into four main categories. $i)$ the first category, where only one single server is used for the setup operations \citep{kravchenko1997parallel,kravchenko1998scheduling,
hasani2014block,kim2012mip,hamzadayi2017modeling,bektur2019mathematical,elidrissi2021mathematical}; $ii)$  the second category, with multiple servers for unloading jobs (without considering the loading operations) \citep{ou2010parallel}; $iii)$ the third category, where two servers are considered, the first server is used for the loading operations and the second one for the unloading operations \citep{jiang2017scheduling,benmansour2021scheduling,
elidrissi2022general}; $iv)$ the last category, where only one single server is used for both loading and unloading operations \citep{xie2012scheduling,hu2013parallel,jiang2014optimal,
jiang2015single,jiang2015online}. Table~\ref{tab-lit} summarizes the papers included into the categories : $ii)$, $iii)$ and $iv)$. \cite{elidrissi2021mathematical} presented a short review of the papers considering the category $i)$.

In this paper, we address the scheduling problem with two identical parallel machines and a single server in charge of loading and unloading operations of jobs. The objective  involves the minimization of the makespan. The static version is considered, where the information about the problem is available before the scheduling starts. Following the standard three-field notation  \citep{graham1979optimization}, the considered problem can be denoted as $P2,S1|s_j,t_j|C_{max}$, where $P2$ represents the two identical parallel machines, $S1$ represents  the single server, $s_j$ is the loading time of job $J_j$, $t_j$ is the unloading time of job $J_j$ and $C_{max}$ is the objective to be minimized (i.e., the makespan). 

For the problem involving several identical servers,
\cite{kravchenko1998scheduling} studied the problem with~$k \geq 2$~servers in order to minimize the makespan.
The authors state that the multiple servers are in charge of only the loading operation of the jobs. In addition, they showed that the problem is unary $\mathcal{NP}$-hard for each $k < m $. Later, \cite{werner2010scheduling} showed that the problem with $k$ servers with an objective function involving the minimization of the makespan is binary $\mathcal{NP}$-hard. In the context of the milk run operations of a logistics company that faces limited unloading docks at the warehouse, \cite{ou2010parallel} studied the problem of scheduling an arbitrary number of identical parallel machines with multiple unloading servers, with an objective function involving the minimization of the total completion time. The authors showed that the shortest processing time (SPT) algorithm has a worst-case bound of 2 and proposed other heuristic algorithms as well as a branch-and-bound algorithm to solve the problem. Later, \cite{jiang2017scheduling} studied the problem  $P2,S2|s_j = t_j = 1|C_{max}$ with unit loading times and unloading times. They showed that the classical list scheduling (LS) and the largest processing time (LPT) heuristics have worst-case ratios of 8/5 and 6/5, respectively. Later, \cite{benmansour2021scheduling} suggested a mathematical programming formulation and a general variable neighborhood search (GVNS) metaheuristic for the general case of the problem $P2,S2|s_j,t_j|C_{max}$ with only two identical parallel machines. Recently, \cite{elidrissi2022general} addressed the problem $P,S2|s_j, t_j|C_{max}$ with an arbitrary number of machines. The authors considered the regular case of the problem, where $ \forall i,j \quad p_i < s_j+p_j+t_j$. They proposed two mathematical programming formulations and three versions of the GVNS metaheuristic with different mechanisms for finding an initial solution.

\begin{sidewaystable}
\caption{ An overview of the approaches in the literature on the identical parallel machine scheduling problem with
a single server or multiple servers involving loading or/and unloading operations.}
\centering
\renewcommand{\tabcolsep}{1.5pt}
{\scriptsize
\renewcommand{\arraystretch}{1.2}
\begin{tabular}{lcccccccccccccccl}
\hline
Publications &   & \multicolumn{3}{l}{Number of machines} &   & \multicolumn{5}{l}{Server (s) constraint} &   & \multicolumn{3}{l}{Methods} &   & Approaches \\
\cline{3-5}\cline{7-11}\cline{13-15}      &   & \multicolumn{1}{c}{$m = 2$} &   & $m \geq2$ &   & \multicolumn{1}{c}{$Sk$ unloading } &   & \multicolumn{1}{l}{$S2$ : loading server} &   & \multicolumn{1}{l}{$S1$ for loading } &   & \multicolumn{1}{c}{Exact} & \multicolumn{1}{c}{Approximate} & \multicolumn{1}{c}{Worst-case} &   &  \\
      &   &   &   &   &   & \multicolumn{1}{c}{ servers} &   & \multicolumn{1}{c}{and unloading server} &   & \multicolumn{1}{c}{ and unloading } &   &   &   & \multicolumn{1}{c}{analysis} &   &  \\
\hline
\cite{ou2010parallel} &   &   &   & \checkmark &   & \checkmark &   &   &   &   &   &   &   &  \checkmark &   & Worst-case analysis  \\
 &   &   &   &  &   &  &   &   &   &   &   &   &   &   &   &   \\
\cite{xie2012scheduling}  &   & \checkmark &   &   &   &   &   &   &   & \checkmark &   &   &   & \checkmark  &   & Worst-case analysis  \\
 &   &   &   &  &   &  &   &   &   &   &   &   &   &   &   &   \\
\cite{hu2013parallel}  &   &   &   &   &   &   &   &   &   & \checkmark &   &   &   & \checkmark  &   & Worst-case analysis  \\
 &   &   &   &  &   &  &   &   &   &   &   &   &   &   &   &   \\
\cite{jiang2014optimal}  &   & \checkmark &   &   &   &   &   &   &   & \checkmark &   &   &  \checkmark &   &   & O(nlogn) algorithm for the problem \\
 &   &   &   &  &   &  &   &   &   &   &   &   &   &   &   &   \\
\cite{jiang2015single}   &   & \checkmark &   &   &   &   &   &   &   & \checkmark &   &   &   & \checkmark  &   & Worst-case analysis  \\
 &   &   &   &  &   &  &   &   &   &   &   &   &   &   &   &   \\
\cite{jiang2015online}   &   & \checkmark &   &   &   &   &   &   &   & \checkmark &   &   &   & \checkmark  &   & Worst-case analysis for the online version of the problem \\
 &   &   &   &  &   &  &   &   &   &   &   &   &   &   &   &   \\
\cite{jiang2017scheduling} &   & \checkmark &   &   &   &   &   & \checkmark &   &   &   &   &   &  \checkmark &   & Worst-case analysis  \\
 &   &   &   &  &   &  &   &   &   &   &   &   &   &   &   &   \\
\cite{benmansour2021scheduling} &   & \checkmark &   &   &   &   &   & \checkmark &   &   &   &  \checkmark & \checkmark  &   &   & MIP formulation and GVNS algorithm \\
 &   &   &   &  &   &  &   &   &   &   &   &   &   &   &   &   \\
\cite{elidrissi2021mathematical} &   &   &   & \checkmark &   &   &   & \checkmark &   &   &   &  \checkmark & \checkmark  &   &   & MIP formulations and GVNS algorithm for the regular case  \\
 &   &   &   &  &   &  &   &   &   &   &   &   &   &   &   &   \\
\textbf{This paper.}  &   & \checkmark &   &   &   &   &   &   &   & \checkmark &   & \checkmark   &  \checkmark  &   &   & MILP formulations, polynomial solvable cases, lower bounds \\
   &   &  &   &   &   &   &   &   &   & &   &    &   &   &   & and metaheuristics  \\
\hline
\end{tabular}
}
\label{tab-lit}
\end{sidewaystable}

In the scheduling literature, the problem $P2,S1|s_j, t_j|C_{max}$ involving both loading and unloading operations has attracted the attention of the researchers. \cite{xie2012scheduling} addressed the problem $P2,S1|s_j, t_j|C_{max}$. They derived some optimal properties, and they showed that the LPT heuristic generates a tight worst-case bound of $3/2 - 1/2m$. \cite{hu2013parallel} considered the classical algorithms LS and LPT for the problem $P2,S1|s_j, t_j|C_{max}$ where $s_j=t_j=1$. They showed that LS and LPT  generate  tight worst case ratios of  12/7 and 4/3, respectively. \cite{jiang2014optimal} addressed the problem $P2,S1|pmpt,s_j= t_j=1|C_{max}$ with preemption, and unit loading and unloading times. They presented  an $O(n \log n)$ solution algorithm for the problem. Later, \cite{jiang2015online} considered the online version of the problem $P2,S1|s_j, t_j|C_{max}$. The authors suggested an algorithm with a competitive ratio of $5/3$. In another paper, \cite{jiang2015single} studied the problem $P2,S1|s_j=t_j=1|C_{max}$ with unit loading and unloading times. They showed that the LS and LPT algorithms have tight worst-case ratios of 12/7 and 4/3, respectively. As far as we know, no solution methods are proposed in the literature for the problem $P2,S1|s_j, t_j|C_{max}$. A goal of our paper aims at bridging this gap. We also compare our results with the literature regarding the problem $P2,S2|s_j, t_j|C_{max}$ involving a dedicated loading server and a dedicated unloading server.

The main contributions of this paper are as follows:

\begin{itemize}
\item To the best of our knowledge, no study proposes solution methods for the parallel machine scheduling problem with a single server in charge of loading and unloading operations of the jobs. Our study generalizes the classical parallel machine scheduling problem with a single server by considering the unloading operations. 

\item We present for the first time in the literature two  mixed-integer linear programming formulations for the problem $P2,S1|s_j,t_j|C_{max}$. The first one is based on completion-time variables and the second one is based on time-indexed variables. Two valid inequalities are suggested to enhance the completion-time variables formulation. 

\item We show that for the problem $P2,S1|s_j,t_j|C_{max}$, the minimization of the makespan is equivalent to the minimization of the idle times of the machines. In addition, three polynomial-time solvable cases and  a tight theoretical lower bound are proposed. 

\item We design an efficient GVNS algorithm with two mechanisms for finding an initial solution to solve large-sized instances of the problem. We provide a new data set and examine the solution quality of different problem instances. The performance of GVNS is compared with a greedy randomized adaptive search procedures metaheuristic. 

\item Some managerial insights are presented, and our results are compared with the literature regarding the problem $P2,S2|s_j,t_j|C_{max}$ involving two dedicated servers (one for the loading operations and one for the unloading operations). 
\end{itemize}

The rest of this paper is organized as follows. Section~\ref{sec2} presents a formal description of the problem. In Section~\ref{sec3}, we present two MILP formulations along with two valid inequalities for the addressed problem. A machines idle-time property, polynomial-time solvable cases and a lower bound are presented in Section~\ref{sec4}. In Section~\ref{sec5},  an iterative improvement procedure and two metaheuristics are presented. Numerical experiments are discussed in Section~\ref{sec6}. Section~\ref{sec7} presents some managerial insights and a comparison with the literature.  Finally, concluding remarks are given in Section~\ref{sec8}.


\section{Definition of the problem and notation} \label{sec2}

The aim of this section is to give a detailed description of the problem $P2,S1|s_j, t_j|C_{max}$. We are given a set  $M = \{1,2\}$ of two identical parallel machines that are available to process a set $\mathcal{N}_{1} = \{J_1,\ldots,J_n\}$ of $n$ independent jobs. Each job $J_j \in N$ is available at the beginning of the scheduling period and has a known integer processing time $p_j > 0$. Before its
processing, each job $J_j$ has to be loaded by the loading server, and the loading time is $s_j > 0$. After its processing, a job has to be unloaded from the machine by the unloading server, and the unloading time is $t_j > 0$. The processing operation starts immediately after the end of the loading operation, and the unloading operation starts immediately after the end of the processing operation. During the loading (resp. unloading) operation, both the machine and the loading server (respectively unloading server) are occupied and after loading (resp. unloading) a job, the loading server (resp. unloading server) becomes available for loading (resp. unloading) the
next job.  Furthermore, there is no precedence constraints among jobs, and preemption is not allowed. The objective is to find a feasible schedule that minimizes the makespan.

The following notation is used to define this problem:

\emph{Sets}

\begin{itemize}
	\item $n$: number of jobs
	\item $M= \{1,2\}$: set of two machines
	\item $\mathcal{N}_{1} = \{J_1,\ldots,J_n\} $:  set of jobs to be processed on the machines
	\item $\mathcal{N}_{2} = \{J_{n+1},\ldots,J_{2n}\}$:  set of loading dummy jobs  to be processed on the server
	\item $\mathcal{N}_{3} = \{J_{2n+1},\ldots,J_{3n}\}$:  set of unloading dummy jobs  to be processed on the server
	\item $\mathcal{N} = \mathcal{N}_{1} \cup \mathcal{N}_{2} \cup \mathcal{N}_{3} $

\end{itemize}

\emph{Parameters}

\begin{itemize}
\item $s_j$: loading time of job $J_j$
\item $p_j$: processing time of job $J_j$
\item $t_j$: unloading time of job $J_j$
\item $A_j$: length of job $J_j$ ($A_j = s_j+p_j+t_j $)
\item $B$: a large positive integer
\end{itemize}

\emph{Continuous decision variables}

\begin{itemize}
	\item $C_j$: completion time of job $J_j$
	\item $C_{j+n}$: completion time of the loading dummy job $J_{j+n}$
	\item $C_{j+2n}$: completion time of the unloading dummy job $J_{j+2n}$

\end{itemize}

For the purpose of modeling, we adopt the following notations, where  the parameter $\rho$ 
represents the duration of the jobs and dummy jobs, either on the machine or on the server.

$$ \rho_{j} = \left \{
\begin{array}{l c l}
A_j  &  & \forall j \in \mathcal{N}_{1} \\
s_{j-n} & & \forall j \in \mathcal{N}_{2} \\
t_{j-2n}  & & \forall j \in \mathcal{N}_{3} \\
\end{array}
\right .$$


\section{Mixed-integer linear programming (MILP) formulations}  \label{sec3}

MILP formulations are well studied in the literature for
different scheduling problems, such as a single machine, parallel machines, a flow shop, a job
shop and an open shop, etc. (see \cite{michael2018scheduling}). The main MILP formulations for scheduling problems can be classified according to the nature of the decision variables  \citep{unlu2010evaluation,kramer2021mathematical,
elidrissi2021mathematical}. In this section, we derive two MILP formulations based on completion-time variables and time-indexed variables for the problem $P2,S1|s_j,t_j|C_{max}$. Before presenting the two MILP formulations, we define our suggested dummy-job representation. 

\subsection{Dummy-job representation}

A dummy-job representation (see \cite{elidrissi2021mathematical}) is used in this paper to simplify the problem and make it possible to model the problem as a relatively neat MILP model. Indeed, in our modeling, we consider the single server as the $(m + 1)^{th}$ (i.e., third) machine. Each time the server is used to load (resp. unload)  job $J_j \in \mathcal{N}_1 $ on machine $k \in M$, then a dummy job $J_{j + n} \in \mathcal{N}_{2}$ (resp. $J_{j + 2n} \in \mathcal{N}_{3}$) is processed on the dummy machine $(m + 1)$ at the same time. This dummy job $J_{i + n}$ (resp. $J_{i + 2n}$) has a processing time equal to the loading time (resp. unloading time) of the job $J_j$ (i.e., $s_{j}=p_{j+n}$ and $t_{j}=p_{j+2n}$ $ \forall j \in \{1,\ldots,n\}$) (see~Figure~\ref{fig:my_label})~. To define the MILP formulations, we adopt the dummy-job representation. 

\begin{figure}[h!]
	\centering
	\includegraphics[width=14cm,height=4.5cm]{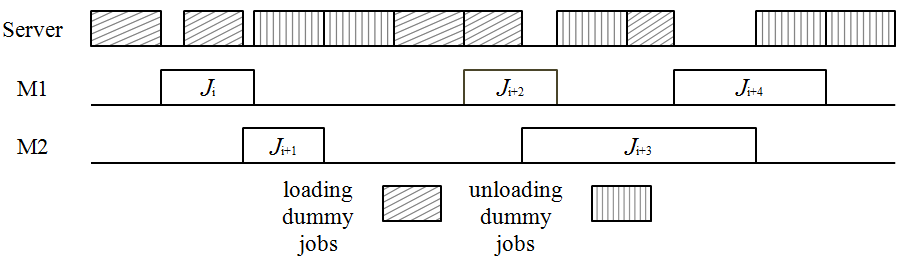}
	\caption{\label{fig:Capture2}Dummy-job representation.}
	\label{fig:my_label}
\end{figure}

\subsection{Formulation 1: Completion-time variables}

In this section, we propose a completion-time variables formulation $(CF)$ for the problem $P2,S1|s_j,t_j|C_{max}$.
A completion-times variables or disjunctive formulation (see \cite{balas1985facial})  has been widely used to model different scheduling problems  \citep{baker2010solving,keha2009mixed,elidrissi2022general}. In our formulation, we use the following decision variables :

Binary decision variables :\\

$x_{i,k} = \left \{
\begin{array}{r l l}
1  &$ if job $J_i$ is processed on machine $k$ $&\\
0  &$otherwise$ & \\
\end{array}
\right .$

$z_{i,j} = \left \{
\begin{array}{r l l}
1  &$ if job $J_i$ is processed before job $J_j$ $&\\
0  &$otherwise$ & \\
\end{array}
\right .$

$y_{i+n,j+n} = \left \{
\begin{array}{r l l}
1  &$ if the loading dummy job $J_{i+n}$ is processed before the loading dummy  $ \\
  &$  job $J_{j+n}$  on the server $ \\
0  &$otherwise$ & \\
\end{array}
\right .$

$y_{i+2n,j+2n} = \left \{
\begin{array}{r l l}
1  &$ if the  unloading dummy job $J_{i+2n}$ is processed before the unloading dummy  $&\\
  &$ job $J_{j+2n}$ on the server $&\\
0  &$otherwise$ & \\
\end{array}
\right .$

The objective function (\ref{ctv_1}) indicates that the makespan (i.e., the completion time of the last job that finishes its processing on the machines) is to be minimized. Constraint set (\ref{ctv_2}) represents the restriction that the makespan of an optimal schedule is greater than or equal to the completion time of the last executed job. Constraint set (\ref{ctv_3}) states that each job must be
processed on exactly one machine. Constraint set (\ref{ctv_4}) ensures that the completion time of each
job is at least greater than or equal to the sum of the loading, the unloading and the processing times of this job.  In addition,  the completion time of each
loading dummy job (resp. unloading dummy job) is at least greater than or equal to its loading time (resp. unloading time). Constraint sets (\ref{ctv_5}) and (\ref{ctv_6}) indicate that no two jobs $J_i$
and $J_j$, scheduled on the same machine (i.e., $x_{i,k} = x_{j,k} = 1$), can overlap in time. Constraint sets (\ref{ctv_7}) and (\ref{ctv_8}) state that no two dummy jobs $J_{j+n}$ (resp. $J_{j+2n}$)  and $J_{i+n}$ (resp. $J_{i+2n}$), scheduled on the single server can overlap in time. 

\begin{eqnarray}
(CF)&\min& C_{max} \label{ctv_1} \\
&s.t.&  C_{max} \geq C_{i} \quad \forall i \in \mathcal{N}_{1} \label{ctv_2}\\
&& \sum_{k \in M}^{} x_{i,k} = 1  \quad \forall i \in \mathcal{N}_{1} \label{ctv_3}\\
&& C_{i} \geq \rho_{i} \quad \forall i \in \mathcal{N}_1 \cup \mathcal{N}_2 \cup\mathcal{N}_3 \label{ctv_4}\\
&& C_{i} + \rho_{j} \leq C_{j} + B(3- x_{i,k}- x_{j,k}- z_{i,j}) \quad  \forall i,j \in N_{1}, i \ne j \label{ctv_5}\\
&& z_{i,j} + z_{j,i} = 1 \quad \forall i,j \in \mathcal{N}_{1}, i \ne j \label{ctv_6}\\
&& C_{i} + \rho_{j} \leq C_{j}  + B(1- y_{i,j}) \quad  \forall i,j \in \mathcal{N}_{2} \cup \mathcal{N}_{3}, i \ne j \label{ctv_7}\\
&& y_{i,j} + y_{j,i} = 1 \quad \forall i,j \in \mathcal{N}_{2} \cup N_{3}, i \ne j \label{ctv_8}\\
&& C_{i}   = C_{i+n} + \rho_{i} - \rho_{i+n} \quad  \forall i \in \mathcal{N}_{1}  \label{ctv_9}\\
&& C_{i}  = C_{i+2n}  \quad  \forall i \in \mathcal{N}_{1}  \label{ctv_10}\\
&& z_{i,j} \in \{0,1\} \quad  \forall i \in \mathcal{N}_{1}  \label{ctv_11} \\
&& x_{i,k} \in \{0,1\} \quad  \forall i \in \mathcal{N}_{1}, \forall k \in M \label{ctv_12} \\
&& y_{i,j} \in \{0,1\} \quad  \forall i,j \in \mathcal{N}_{2} \cup \mathcal{N}_{3} \label{ctv_13}
\end{eqnarray}

Constraints (\ref{ctv_9}) calculate the completion time of each job $J_i$. $C_i$ is equal
to the completion time of the loading operation, $C_{i+n}$, plus the processing time and the unloading time of the same job (i.e., $\rho_i - \rho_{i+n}$). Finally, the completion time of the job $J_i$ is equal to the completion time
of the unloading operation of the same job (\ref{ctv_10}). Constraint sets (\ref{ctv_11}) - (\ref{ctv_13}) define the variables $z_{i,j}$, $x_{i,k}$ and $y_{i,j}$ as binary ones. 

\subsection{Strengthening the completion-time variables formulation}
\label{valin}

We present here two valid inequalities to reduce the
time required to solve problem $P2,S1|s_j,t_j|C_{max}$ by the $CF$ formulation.

\begin{proposition}
	\label{va1}
	The following constraints are valid for CF formulation.
	\begin{align}
	&C_{max} \geq \sum_{j \in \mathcal{N}_{1} } A_jx_{j,k}& \forall k \in M \label{val_1}
	\end{align} 
	
\end{proposition}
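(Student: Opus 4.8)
The plan is to read the inequality as a machine-occupation (or ``load'') bound and to establish it by a disjoint-interval, or volume, argument: the total time that any single machine $k$ spends busy can never exceed the length of the interval $[0,C_{max}]$ in which all work must fit. First I would fix a machine $k \in M$ and collect the jobs assigned to it, namely $S_k = \{\, j \in \mathcal{N}_1 : x_{j,k} = 1 \,\}$, so that the right-hand side of (\ref{val_1}) equals $\sum_{j \in S_k} A_j$.

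The first substantive step is to show that in any feasible schedule each job $J_j \in S_k$ occupies machine $k$ during a single contiguous interval of length exactly $A_j = s_j + p_j + t_j$. This is where the modeling assumptions enter: because no delay is allowed between loading and processing nor between processing and unloading, the machine is continuously busy from the instant loading begins until unloading ends. Using constraints (\ref{ctv_9})--(\ref{ctv_10}), the job completes at $C_j$ and its loading begins at $C_j - A_j$, so the occupied interval is $[\,C_j - A_j,\; C_j\,]$, of length $A_j$.

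Next I would argue that, as $J_j$ ranges over $S_k$, these intervals are pairwise disjoint and all contained in $[0, C_{max}]$. Pairwise disjointness for two distinct jobs assigned to the same machine is exactly what the disjunctive constraints (\ref{ctv_5})--(\ref{ctv_6}) enforce. Containment follows from two other constraints: (\ref{ctv_4}) gives $C_j \geq \rho_j = A_j$, hence the left endpoint $C_j - A_j \geq 0$; and (\ref{ctv_2}) gives $C_j \leq C_{max}$, so the right endpoint lies at or below $C_{max}$. Summing the lengths of these disjoint intervals, all nested in a window of total length $C_{max}$, yields $\sum_{j \in S_k} A_j \leq C_{max}$, and rewriting the sum as $\sum_{j \in \mathcal{N}_1} A_j x_{j,k}$ gives precisely (\ref{val_1}).

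The only delicate point, and the one I would state most carefully, is the claim that each job consumes a full contiguous block of length $A_j$ on its machine with no internal idleness; once that is in place together with disjointness and containment, the conclusion is a one-line counting argument. I do not expect a genuine obstacle here, since all three ingredients follow directly from the constraints of the $CF$ formulation; the main care is simply in invoking the ``no delay'' requirement to rule out idle time inside a job's occupation of the machine.
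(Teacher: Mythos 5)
Your proof is correct and follows essentially the same route as the paper, which simply observes that $\sum_{j \in \mathcal{N}_{1}} A_j x_{j,k}$ is the total workload of machine $k$ and that the makespan cannot be smaller than any machine's workload. You merely spell out, via disjoint occupation intervals contained in $[0, C_{max}]$, the details that the paper dismisses as obvious.
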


\begin{proof}

$\sum_{j=1}^{n}~A_jx_{j,k}$ represents the total work load time of the machine $k$ (idle times are not counted). 

It is obvious to see that $ C_{max} \geq \sum_{j \in \mathcal{N}_{1}}^{}~A_jx_{j,k}$. Hence, inequalities (\ref{val_1}) hold.

\end{proof}

Since the two machines are identical, Constraints~(\ref{val_2}) break the symmetry among the machines.

\begin{proposition}
\label{va2}
The following constraints are valid for the $CF$ formulation.
\begin{align}
& \sum_{j \in \mathcal{N}_{1}, j < i}^{} x_{j,k-1} \geq x_{i,k} & \forall i \in \mathcal{N}_{1}, \forall k \in M \setminus \{1\} \label{val_2}
\end{align} 
	
\end{proposition}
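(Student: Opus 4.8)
The plan is to establish validity in the symmetry-breaking sense: because the two machines are interchangeable, I would show that any feasible solution of $CF$ can be transformed, without changing its makespan or violating any constraint, into one that satisfies (\ref{val_2}). This guarantees that at least one optimal solution survives, so the family may be appended to $CF$ without losing optimality. The whole argument rests on exploiting that no constraint of $CF$ distinguishes machine $1$ from machine $2$ except through the label.

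First I would simplify the statement. Since $M=\{1,2\}$, the index set $M\setminus\{1\}$ reduces to $\{2\}$ and $k-1=1$, so (\ref{val_2}) is really the single family
\[
\sum_{j\in\mathcal{N}_1,\, j<i} x_{j,1} \;\geq\; x_{i,2}\qquad \forall i\in\mathcal{N}_1 ,
\]
which reads: job $J_i$ may be placed on machine $2$ only if some lower-indexed job occupies machine $1$. Next I would observe the convenient collapse for two machines: taking $i=1$ yields an empty left-hand sum, forcing $x_{1,2}=0$, i.e. $J_1$ on machine $1$; conversely, if $x_{1,1}=1$ then for every $i$ with $x_{i,2}=1$ we have $i\geq 2$ and the term $x_{1,1}=1$ alone makes the left-hand side at least $1$, while for every $i$ with $x_{i,2}=0$ the right-hand side is $0$. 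Hence I expect to reduce the entire family (\ref{val_2}) to the single requirement that $J_1$ be assigned to machine $1$.

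With that reduction in hand, I would exhibit for an arbitrary feasible solution an equivalent one placing $J_1$ on machine $1$. If $x_{1,1}=1$ already, nothing is needed. Otherwise I would swap the two machine labels, i.e. interchange $x_{i,1}$ and $x_{i,2}$ for all $i$ while leaving every completion-time variable and every server variable $y$ untouched. I would then verify that this relabeling leaves the set of jobs sharing a common machine unchanged (only its index changes), so the disjunctive constraints (\ref{ctv_5})--(\ref{ctv_6}) still hold with the same completion times, the server constraints (\ref{ctv_7})--(\ref{ctv_8}) are unaffected, and the makespan defined by (\ref{ctv_1})--(\ref{ctv_2}) is preserved. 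After the swap $J_1$ sits on machine $1$, and by the equivalence above all inequalities (\ref{val_2}) hold.

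The hard part will be the verification that this swap genuinely preserves both feasibility and the objective; this is precisely where the identical-machines hypothesis is used in full, and it is the conceptual crux of the symmetry argument, even though the check itself is routine because no constraint treats the two machines asymmetrically. Combining the reduction with the relabeling step, every optimal solution gives rise to an optimal solution satisfying (\ref{val_2}), so the inequalities are valid.
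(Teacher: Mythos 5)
Your proof is correct. The paper gives no formal proof of this proposition --- it only remarks that, the machines being identical, constraints~(\ref{val_2}) break the symmetry between them --- and your argument is precisely the rigorous version of that remark: you correctly observe that for $m=2$ the whole family collapses to the single requirement $x_{1,1}=1$, and that swapping the two machine labels in any feasible solution of $CF$ preserves feasibility and the makespan because no constraint distinguishes the machines, so at least one optimal solution satisfies~(\ref{val_2}).
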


Note that we refer to Eq. (\ref{ctv_1})-(\ref{ctv_13}) as $CF$ and by considering the set of constraints Eq. (\ref{ctv_1})-(\ref{val_2}) as $CF^{+}$. A computational comparison between $CF$ and $CF_1^{+}$ is conducted in Section~\ref{sec6}.

\subsection{Formulation 2 : time-indexed variables}
In this section, we propose a time-indexed variables formulation $(TIF)$ for the problem $P2,S1|s_j,t_j|C_{max}$.
A time-indexed variables formulation was introduced by~\cite{sousa1992time} for the non-preemptive single machine scheduling problem. It has been used to model different scheduling problems (see \citep{keha2009mixed,baker2010solving,unlu2010evaluation}. This formulation is based on a time discretization.  The time is divided into periods $1,2,3,\ldots,T$, where period $t$ starts at time $t-1$ and ends at time $t$. The  horizon $T$ is an important part of the formulation and its size depends on.  Any upper bound ($UB$) can be chosen as $T$.  However, a tighter upper bound is preferable to reduce the problem size as the number of time points is pseudo-polynomial in the size of the input. In our formulation, we choose $T=\sum_{j \in \mathcal{N}_{1}} (A_j)$.

The decision variables are defined as follows:

$x_{\{i,t'\}} = \left \{
\begin{array}{r l l}
1  &$ if job $i$ starts processing at time $t'$ $ &\\
0  & $otherwise$ &\\
\end{array}
\right .$\\

\begin{eqnarray}
(TIF)&\min& C_{max} \label{TIF_0} \\
&s.t.& \sum_{t'=0}^{T- \rho_i } (t'+ \rho_i )x_{\{i,t'\}} \leq C_{max} \quad  \forall i \in \mathcal{N}_{1}  \label{TIF_1}\\ 
&& \sum_{i \in \mathcal{N}_{1} } \sum_{s=max(0,t'- \rho_{i} +1)}^{t'} x_{\{i,s\}} \leq 2 \quad  \forall t' \in [0, T]  \label{TIF_2}\\
&& \sum_{i \in \mathcal{N}_{2} \cup N_{3} } 
\sum_{s=max(0,t'-\rho_{i}+1)}^{t'} x_{\{i,s\}} \leq 1 \quad  \forall t' \in [0, T]  \label{TIF_3}\\
&& \sum_{t'=0}^{T-\rho_i} x_{\{i,t'\}} = 1 \quad  \forall i \in \mathcal{N}_{1} \label{TIF_5}\\
&& \sum_{t'=0}^{T- \rho_i} x_{\{i,t'\}} = 1 \quad  \forall i \in \mathcal{N}_{2} \label{TIF_6}\\
&& \sum_{t'=0}^{T- \rho_{i}} x_{\{i,t'\}} = 1 \quad  \forall i \in \mathcal{N}_{3} \label{TIF_7}\\
&& x_{\{i,t'\}} = x_{\{i+n,t'\}} \quad \forall i \in \mathcal{N}_{1}, \forall t' \in [0, T] \label{TIF_8}\\
&& x_{\{i,t'\}} = x_{\{i+2n,t'+s_{i}+p_{i}\}} \quad \forall i \in \mathcal{N}_{1}, \forall t' \in [0, T-\rho_i] \label{TIF_9}\\
&& x_{\{i,t'\}} \in \{0,1\} \quad \forall i \in \mathcal{N}_1, \forall t' \in [0, T-\rho_i] \label{TIF_10}
\end{eqnarray}

In this formulation, the objective function (\ref{TIF_0}) indicates that the makespan, is to be minimized. Constraint set (\ref{TIF_1}) represents the fact that the makespan of an optimal schedule is greater than or equal to the completion time of all executed jobs, where job $J_i$ that starts its loading operation at time point $t$ (i.e., the job for which~$x_{i,t'}$~=~1) and will finish at time $C_i = t + \rho_i$. The completion time of job $J_i$ is calculated as $C_i = \sum_{t=0}^{T-\rho_{i}} (t' + \rho_{i}) x_{i,t'} $. The set of constraints~(\ref{TIF_2}) specifies that at any given time, at most two jobs can be processed on all machines. Constraints~(\ref{TIF_3}) ensure that at any given time, at most one dummy job (loading dummy job or unloading dummy job) can be processed by the server (i.e., the dummy machine). Constraints (\ref{TIF_5}) express that each job $J_i$ must start at some time point $t'$ in the scheduling horizon, where $t' \leq T-s_i - p_i - t_i$. Constraints (\ref{TIF_6}) state that each loading dummy job must start at some time point $t'$ on the dummy machine in the scheduling horizon, where $t' \leq T-s_i$. Constraints (\ref{TIF_7}) guarantee that each unloading dummy job must start at some time point $t'$ on the dummy machine in the scheduling horizon, where $t' \leq T-u_i$. Constraints (\ref{TIF_8}) express  that the start time of the loading dummy job $J_{i+n}$ on the dummy machine and the start time of the job $J_{i}$ is the same (i.e. $x_{i,t'} = x_{i+n,t'}$). Constraints (\ref{TIF_9}) ensure that the unloading operation of the job $J_{i}$ starts immediately after the end of the processing operation of the same job (i.e. $x_{i,t'} = x_{i+2n,t'+s_i+p_i}$). Finally, constraints~(\ref{TIF_10}) define the feasibility domain of the decision variables. 

\subsection{Enhanced time-indexed formulation}

We show here how to reduce the number of
variables and constraints required by the $TIF$ formulation (\ref{TIF_0})–(\ref{TIF_10}) and therefore, improving its computational behavior. The size of the $TIF$ formulation depends on the time horizon $T$. Thus, a reduction of the length of the  time horizon is necessary. To do so, we fix the value of $T$ to the approximate makespan solution given by the GVNS metaheuristic presented in Section~\ref{GV}. It is clear that the new value of $T$ is less than the upper bound $UB=\sum_{j \in \mathcal{N}_{1}} (A_j) $. A comparative study between these two values of $T$ is conducted in the section on computational results (Section~\ref{GV}). We refer to $TIF$ with the reduced
value of the time horizon $T$ as formulation $TIF^{+}$.

\section{Machines Idle-time property, polynomial-time solvable cases and lower bounds}  \label{sec4}
\subsection{Machines~Idle-time property}

In this section, we show that for the problem $P2,S1|s_j,t_j|C_{max}$, the minimization of the makespan is equivalent to the minimization of the total ~\textit{idle time} of the machines. First, we denote by $\widehat{IT}$ the total ~\textit{idle time} of the machines. The machine \textit{idle time} is the time a machine which has just finished the unloading operation of a job is idle before it starts the loading operation of the next job (we recall that in loading and unloading operations, both the machine and the server are occupied). Indeed, this \textit{idle time} is due to the unavailability of the server. Note that we include in this definition the \textit{Idle-time} on a machine after all of its processing is completed, but before the other machine completes its processing (see \cite{koulamas1996scheduling}). In addition, we denote by $IT_{k}$ the total machine \textit{idle time} in a machine $k$. Therefore, Proposition~\ref{prop0} and Proposition~\ref{prop1} can be derived. 

\begin{proposition}
\label{prop0}
The total idle time of machine $k$ is computed as follows:

\begin{equation}
 IT_{k}= C_{max} - \sum_{j \in \mathcal{N}_1}^{} x_{j,k}A_j \quad \forall k \in M
\end{equation}

\end{proposition}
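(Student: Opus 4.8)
The plan is to prove the identity by a straightforward time-accounting (conservation) argument on a single machine. I would fix a machine $k \in M$ and an arbitrary feasible schedule, and then partition the time window $[0, C_{max}]$ according to the activity of machine $k$ at each instant.

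First I would observe that, because no delay is allowed between loading and processing and between processing and unloading, each job $J_j$ assigned to machine $k$ (i.e. with $x_{j,k}=1$) occupies that machine during one \emph{contiguous} block of length exactly $A_j = s_j+p_j+t_j$. Moreover, by the disjunctive constraints~(\ref{ctv_5}), two distinct jobs assigned to the same machine cannot overlap in time, so these blocks are pairwise disjoint. Hence the total amount of time during which machine $k$ is busy (loading, processing, or unloading some job) equals the sum of the block lengths, namely $\sum_{j \in \mathcal{N}_1} x_{j,k} A_j$. Next I would argue that all of this activity lies inside $[0, C_{max}]$: every completion time satisfies $C_j \le C_{max}$ by~(\ref{ctv_2}), so the last operation on machine $k$ finishes no later than $C_{max}$, and no operation starts before time $0$. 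Therefore, at every instant of $[0, C_{max}]$ machine $k$ is either busy or idle, and these two states are mutually exclusive and exhaustive.

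By the definition of $IT_k$ --- the total time machine $k$ is idle between operations together with the idle segment after its own last operation until $C_{max}$ --- the length $C_{max}$ of the window splits as the busy time plus $IT_k$. Combining this with the busy-time count of the second step yields $C_{max} = \sum_{j \in \mathcal{N}_1} x_{j,k} A_j + IT_k$, which rearranges to the claimed formula. I expect the only delicate point to be making the bookkeeping airtight, i.e. verifying that the decomposition of $[0,C_{max}]$ into busy and idle intervals is genuinely exhaustive and non-overlapping, and that $IT_k$ as defined captures exactly the complement of the busy blocks (including the trailing idle segment on a machine that completes before the other one). Once the contiguity of each job's block and the no-overlap property are in place, the identity follows directly without further computation.
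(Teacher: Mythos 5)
Your time-accounting argument is correct and complete: the contiguity of each job's loading--processing--unloading block, the pairwise disjointness of blocks on the same machine, and the containment of all activity in $[0,C_{max}]$ together give exactly $C_{max}=\sum_{j\in\mathcal{N}_1}x_{j,k}A_j+IT_k$. The paper in fact states Proposition~\ref{prop0} without any proof (its formula is simply taken as given in the proof of Proposition~\ref{prop1}), so your write-up supplies the justification the authors treat as self-evident; the only point worth being explicit about is that the complement of the busy blocks also includes any idle segment \emph{before} the machine's first loading operation (which the paper's informal definition of idle time does not mention but which the formula necessarily counts).
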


\begin{proposition}
\label{prop1}

The total ~\textit{idle time} of the machines is equal to:
\begin{equation} 
\widehat{IT} = mC_{max} - \sum_{j \in \mathcal{N}_1}^{} A_j
\end{equation}

\end{proposition}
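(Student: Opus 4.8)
The plan is to obtain $\widehat{IT}$ directly by summing the per-machine idle times already computed in Proposition~\ref{prop0} over the set $M$ of machines. First I would invoke the definition of the total machine idle time, $\widehat{IT} = \sum_{k \in M} IT_k$. Substituting the closed-form expression $IT_k = C_{max} - \sum_{j \in \mathcal{N}_1} x_{j,k} A_j$ supplied by Proposition~\ref{prop0} then gives
$$\widehat{IT} = \sum_{k \in M} \left( C_{max} - \sum_{j \in \mathcal{N}_1} x_{j,k} A_j \right) = m\,C_{max} - \sum_{k \in M} \sum_{j \in \mathcal{N}_1} x_{j,k} A_j,$$
where the leading term $m\,C_{max}$ arises because $C_{max}$ is independent of $k$ and $|M| = m$.

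The key step is to collapse the remaining double sum. Interchanging the order of summation yields $\sum_{j \in \mathcal{N}_1} A_j \sum_{k \in M} x_{j,k}$, and by the assignment constraint~(\ref{ctv_3}) each job $J_j \in \mathcal{N}_1$ is processed on exactly one machine, so that $\sum_{k \in M} x_{j,k} = 1$ for every $j$. Hence $\sum_{k \in M} \sum_{j \in \mathcal{N}_1} x_{j,k} A_j = \sum_{j \in \mathcal{N}_1} A_j$, and substituting back gives the claimed identity $\widehat{IT} = m\,C_{max} - \sum_{j \in \mathcal{N}_1} A_j$.

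There is no real obstacle here: once Proposition~\ref{prop0} furnishes the per-machine formula, the result is a one-line aggregation whose only subtlety is the correct use of the assignment constraint to eliminate the dependence on the $x_{j,k}$ variables. The point worth emphasizing in the write-up is that the total workload $\sum_{j \in \mathcal{N}_1} A_j$ is a fixed constant, independent of the schedule; consequently the $m$ copies of the horizon $C_{max}$ must account for exactly this workload together with all idle time, which is precisely what the identity expresses. This in turn makes transparent why minimizing $C_{max}$ is equivalent to minimizing $\widehat{IT}$, the equivalence announced at the start of the section.
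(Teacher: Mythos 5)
Your proposal is correct and follows essentially the same route as the paper's proof: summing the per-machine formula from Proposition~\ref{prop0} over $k \in M$, pulling out the $m$ copies of $C_{max}$, interchanging the order of summation, and using the assignment constraint $\sum_{k \in M} x_{j,k} = 1$ to collapse the double sum to $\sum_{j \in \mathcal{N}_1} A_j$. No discrepancies to report.
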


\begin{proof}

since we have 

$$\sum_{k \in M}^{} x_{j,k} = 1 \quad \forall j \in \mathcal{N}_1  $$ 

$$IT_{k}= C_{max} - \sum_{j \in \mathcal{N}_1}^{} x_{j,k}(s_j+p_j+t_j) \quad \forall k \in M$$

we obtain

\begin{align}
\widehat{IT} & = \sum_{k \in M}^{} IT_k  \nonumber \\
& = \sum_{k \in M}^{}\Big(C_{max} - \sum_{j \in \mathcal{N}_1}^{} x_{j,k}(s_j+p_j+t_j)\Big) \nonumber\\
& = \sum_{k \in M}^{}C_{max} - \sum_{k \in M}^{}\sum_{j \in \mathcal{N}_1}^{} x_{j,k}(s_j+p_j+t_j) \nonumber\\
& = \sum_{k \in M}^{}C_{max} - \sum_{j \in \mathcal{N}_1}^{}\sum_{k \in M}^{} x_{j,k}(s_j+p_j+t_j) \nonumber\\
& = mC_{max} - \sum_{j \in \mathcal{N}_1}^{}(s_j+p_j+t_j) \nonumber\\
& = mC_{max} - \sum_{j \in \mathcal{N}_1}^{}A_j \nonumber
\end{align}

\end{proof}

Therefore, for the problem $P2,S1|s_j,t_j|C_{max}$, the minimization of the makespan is equivalent to the minimization of the total \textit{idle time} of the machines.
\subsection{Polynomial-time solvable cases}  \label{lb}

We now present some polynomial-time solvable cases for the problem $P2,S1|s_j,t_j|C_{max}$. 

\begin{proposition}
\label{propo1}
We consider a set of jobs, where $s_i = p_j \quad \forall i,j \in \mathcal{N}_1$ and 
$ p_i = t_j \quad \forall i,j \in \mathcal{N}_1$. Then all permutations define an optimal schedule. In this case, the optimal makespan is equal to the sum of all loading and unloading times of jobs.
\end{proposition}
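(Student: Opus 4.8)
The plan is to first unpack the two hypotheses, then bound $C_{max}$ from below by a server-bottleneck argument, and finally match that bound with an explicit schedule; optimality of \emph{every} permutation will then fall out of a symmetry observation.

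First I would note that $s_i = p_j$ and $p_i = t_j$ for all $i,j \in \mathcal{N}_1$ force every loading, processing and unloading time to equal one common value, say $a$: fixing one index shows that all $s_i$ equal all $p_j$, and all $p_j$ equal all $t_k$. Hence $s_j = p_j = t_j = a$ and $A_j = 3a$ for every job, so the instance is completely symmetric under relabeling of the jobs. This symmetry is exactly what makes the phrase ``all permutations define an optimal schedule'' tractable: since the jobs are indistinguishable, any two permutations yield structurally identical schedules, so it suffices to prove that a single schedule attains the claimed value $\sum_{j \in \mathcal{N}_1}(s_j+t_j) = 2na$ and that no schedule can do better.

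For the lower bound I would argue that the single server is occupied during every loading and every unloading operation and performs at most one operation at a time, so its total busy time is exactly $\sum_{j \in \mathcal{N}_1}(s_j+t_j)$. Because the completion time of each job coincides with the completion of its unloading---itself a server operation---the makespan equals the end of the final server operation, hence is at least the server's total busy time, giving $C_{max} \ge \sum_{j \in \mathcal{N}_1}(s_j+t_j) = 2na$. Here it is worth stressing that the server, not the machine workload, is binding: Proposition~\ref{prop1} gives $\widehat{IT} = m C_{max} - \sum_{j} A_j \ge 0$, which with $m=2$ and $\sum_j A_j = 3na$ yields only $C_{max} \ge \frac{3na}{2}$, a weaker estimate than $2na$.

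For the matching upper bound I would construct a schedule whose server never idles by grouping the jobs into consecutive pairs. For a pair starting at the current time $\tau$, I start the first job's loading on machine~$1$ over $[\tau,\tau+a]$, the second job's loading on machine~$2$ over $[\tau+a,\tau+2a]$, then---the first job's processing having run over $[\tau+a,\tau+2a]$---the first job's unloading over $[\tau+2a,\tau+3a]$, and finally the second job's unloading over $[\tau+3a,\tau+4a]$. The server works back-to-back on these four length-$a$ operations over an interval of length $4a$ while exactly the two machines are in use, and the next pair begins at $\tau+4a$; iterating tiles the server timeline with no gap, so the last unloading finishes at $2na$, meeting the lower bound. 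The schedule is feasible because the no-delay requirement forces each unloading to begin exactly $2a$ after its loading, which is precisely the offset produced by the interleaving, and at most two machines are ever simultaneously occupied.

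The main obstacle I anticipate is verifying this feasibility cleanly, namely that the server operations mesh so that the ``unload begins $2a$ after load'' timing and the two-machine capacity hold simultaneously with no forced idle insertion. The delicate point is the boundary: the packing is transparent when the jobs split evenly into interleaved pairs, and the care needed is in handling a leftover job, whose processing phase is the only place where a server gap could otherwise appear. Once a zero-idle server schedule is exhibited, optimality is immediate from the lower bound, and optimality of every permutation follows from the job symmetry noted at the outset.
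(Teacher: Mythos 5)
Your reduction of the hypothesis to a single common value $a$ (so that $s_j=p_j=t_j=a$ for all $j$), your server-occupancy lower bound $C_{max}\ge\sum_{j\in\mathcal{N}_1}(s_j+t_j)=2na$ (this is exactly the paper's $LB_2$, and you are right that it dominates the machine-load bound $3na/2$ here), and your back-to-back pairing of the server's operations into load--load--unload--unload blocks of length $4a$ are all correct, and together they close the argument whenever $n$ is even. This is essentially the paper's own route: its proof also alternates consecutive pairs of jobs on the two machines and asserts that ``the total waiting time of the server is equal to zero.'' The only substantive difference is that you state the lower bound explicitly and you flag the parity issue, which the paper passes over in silence.

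The leftover-job case you defer is, however, not a routine boundary check: it cannot be repaired, because for odd $n$ the value $2na$ is unattainable. For $n=1$ the makespan is $s_1+p_1+t_1=3a>2a$. In general, a schedule with $C_{max}=2na$ would require the server to be busy without interruption on $[0,2na]$ with its last operation an unloading; but the first server slot must be a loading (nothing finishes processing before time $2a$), the no-delay constraint then forces that job's unloading into the third slot, the second slot can only be another loading, and inductively the server sequence is forced into load--load--unload--unload quadruples. When $n$ is odd one job is left over, and during its processing interval the server has nothing it is permitted to do, so $C_{max}\ge 2na+a$. Hence your proof is complete exactly for even $n$; for odd $n$ the gap you point to is a genuine counterexample to the stated makespan formula (a defect shared by the paper's proof), and the proposition needs either an evenness assumption on $n$ or a corrected value of $C_{max}^{*}$. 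The claim that all permutations are optimal survives in any case, by the relabeling symmetry you already noted.
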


\begin{proof}

We assume that the processing time of the job scheduled at position 1 is equal to the loading time of the job scheduled at position 2 and the processing time of the job scheduled at position 2 is equal to the unloading time of the job scheduled at position 1. Then, the job at position 2 will start immediately after the end of the loading operation of the job at position 1 and the unloading operation of the job at position 2 will start immediately after the end of the unloading operation of the job at position 1. Therefore, the completion time of the job at position 2 is equal to $C_{[2]}=C_{[1]}+t_{[2]}$, and the waiting time of the server is equal to 0. Now, if we consider $n$ jobs to be scheduled with  $\forall i,j \in \mathcal{N}_1 \quad s_i = p_j$ and $\forall i,j \in \mathcal{N}_1 \quad p_i = t_j$, then the jobs will alternate on the two machines, and the total waiting time of the server is equal to zero (see Figure~\ref{fig1}). Therefore, in this case all permutations represent an optimal schedule, and the optimal makespan ($C_{max}^{*}$) is equal to the sum of all loading and unloading times (i.e., $ C_{max}^{*} = \sum_{j \in \mathcal{N}_1}^{} (s_j + t_j)$).

\begin{figure}[h!]
	\centering
	\includegraphics[width=14cm,height=4.5cm]{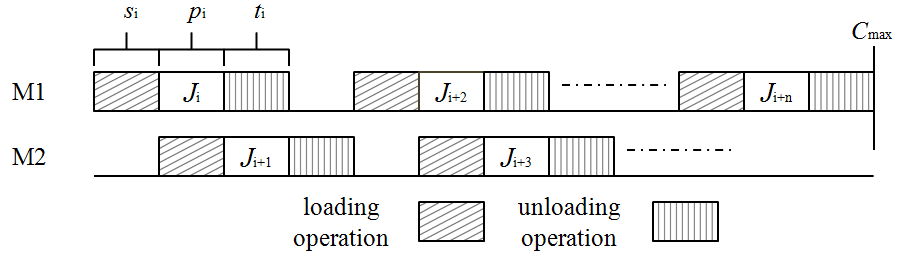}
	\caption{\label{fig:Capture2}Polynomial-time solvable case~1.}
	\label{fig1}
\end{figure}

\end{proof}

\begin{proposition}
\label{prop2}
Consider a set of jobs, where $p_j < s_ i \quad \forall i,j \in \mathcal{N}_1   $. Then all permutations define an optimal solution. In this case, the optimal makespan is equal to the sum of the lengths of all jobs ($ C_{max}^{*} = \sum_{j \in \mathcal{N}_1}^{} A_j $).
\end{proposition}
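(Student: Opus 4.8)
The plan is to prove the statement in two halves: a lower bound showing that \emph{every} feasible schedule satisfies $C_{max}\ge \sum_{j\in\mathcal{N}_1} A_j$, and a matching upper bound showing that any permutation, scheduled serially, attains this value. The hypothesis $p_j<s_i$ does all of its work in the lower bound, so I would concentrate the argument there and treat the upper bound as essentially a construction.

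For the lower bound, I would introduce for each job $J_j$ a single quantity $\ell_j$, the start time of its loading operation in an arbitrary feasible schedule. Since processing starts immediately after loading and unloading starts immediately after processing (the no-delay requirement), the server is forced to occupy exactly the two intervals $[\ell_j,\ell_j+s_j]$ (loading) and $[\ell_j+s_j+p_j,\ell_j+A_j]$ (unloading), separated by the mandatory processing gap of length $p_j$ during which $J_j$ sits on its machine. The heart of the proof is the pairwise claim that for any two jobs with $\ell_j<\ell_k$ one must have $\ell_k\ge \ell_j+A_j$; that is, $J_j$ finishes its unloading before the loading of $J_k$ may even begin.

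To establish this claim I would use only the fact that the single server cannot perform two operations at once. Disjointness of the two loading intervals already forces $\ell_k\ge \ell_j+s_j$. The loading interval of $J_k$ must in addition be disjoint from the unloading interval $[\ell_j+s_j+p_j,\ell_j+A_j]$ of $J_j$, and an interval can lie disjoint from this one only by ending before it starts or beginning after it ends. The first option gives $\ell_k+s_k\le \ell_j+s_j+p_j$, which combined with $\ell_k\ge \ell_j+s_j$ yields $s_k\le p_j$ --- impossible under the hypothesis $p_j<s_k$. Hence only the second option survives, namely $\ell_k\ge \ell_j+A_j$. This dichotomy is the step I expect to require the most care, since it is what converts the local loading/unloading constraints into a \emph{global} serialization: the per-job footprints $[\ell_j,\ell_j+A_j]$ are pairwise disjoint over the whole system, not merely on one machine. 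Ordering the jobs by increasing $\ell_j$ and telescoping the inequalities $\ell_{(i+1)}\ge \ell_{(i)}+A_{(i)}$ (with $\ell_{(1)}\ge 0$) then gives $C_{max}\ge C_{(n)}=\ell_{(n)}+A_{(n)}\ge \sum_{j\in\mathcal{N}_1}A_j$.

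For the upper bound I would simply exhibit a schedule attaining the bound: take the jobs in the order of an arbitrary permutation and run them back-to-back, starting the loading of each job exactly when the previous job completes (the machine assignment is irrelevant, since at most one job is ever active). Server feasibility and both no-delay conditions are then immediate because the footprints are laid end to end, and the resulting makespan is exactly $\sum_{j\in\mathcal{N}_1} A_j$. Combining the two halves shows that every permutation is optimal with $C_{max}^{*}=\sum_{j\in\mathcal{N}_1}A_j$; as a consistency check, substituting into the identity of Proposition~\ref{prop1} gives $\widehat{IT}=\sum_{j\in\mathcal{N}_1}A_j$, confirming that the second machine yields no benefit in this regime.
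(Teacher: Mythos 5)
Your proposal is correct and rests on exactly the same observation as the paper's proof: since $p_j<s_k$, the loading of a later job cannot fit inside the processing gap of an earlier one, so the single server forces the jobs' entire footprints to be serialized, giving makespan $\sum_{j\in\mathcal{N}_1}A_j$ for every permutation. Your version is simply a more rigorous rendering — an explicit pairwise disjointness claim plus telescoping for the lower bound, and a serial construction for the upper bound — where the paper argues informally for positions $1$ and $2$ and extends by induction in words.
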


\begin{proof}

We assume that the processing time of the job scheduled at position 1 is strictly less than the loading time of the job scheduled at position 2. Thus, the job at position 2 
cannot be scheduled immediately after the end of the loading operation of the job scheduled at position 1 (see \ref{fig21}). This is because only one single server is available in the system. Thus, the job at position 2 can be scheduled only after the end of the unloading operation of the job at position 1. In this case, the completion time of the job scheduled  at position 2 is  equal to $C_{[2]}=C_{[1]}+A_{[2]}$. Now, if we consider $n$ jobs with  $\forall i,j \in \mathcal{N}_1 \quad p_j < s_j$, then each job at position $[i]$ can start its leading operation immediately after the end of the unloading operation of the job scheduled at the position $[i-1]$. Therefore, in this case all permutations represent an optimal schedule, and the optimal makespan is equal to the sum of all the lengths of the jobs (i.e., $ C_{max}^{*} = \sum_{j \in \mathcal{N}_1}^{} (A_j)$).

\begin{figure}[h!]
  \centering
  \subfloat[]{\includegraphics[width=0.55\textwidth]{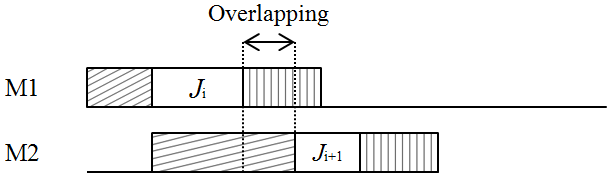}\label{fig21}}
  \hfill
  \subfloat[]{\includegraphics[width=0.55\textwidth]{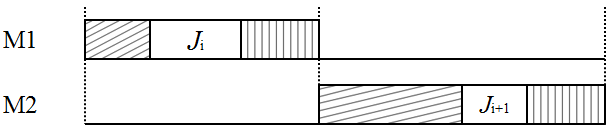}\label{fig2}}
  \caption{Polynomial-time solvable case~2.}
\end{figure}
\end{proof}

\begin{proposition}
\label{prop3}
Consider a set of jobs, where $s_i \leq  p_j \quad \forall i,j \in \mathcal{N}_1$ and 
$ p_i < t_j \quad \forall i,j \in \mathcal{N}_1$. Then all permutations define an optimal solution. In this case, the optimal makespan is equal to the sum of the lengths of all jobs ($ C_{max}^{*} = \sum_{j \in \mathcal{N}_1}^{} A_j $).
\end{proposition}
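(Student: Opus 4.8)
The plan is to prove the statement in two halves: a lower bound $C_{max}\ge \sum_{j\in\mathcal{N}_1}A_j$ that holds for \emph{every} feasible schedule, and a matching construction showing that any permutation attains this value. The central object is the \emph{active interval} of a job $J_j$: if its loading starts at time $\alpha_j$, then by the no-delay constraints of Section~\ref{sec2} its processing occupies $[\alpha_j+s_j,\alpha_j+s_j+p_j]$ and its unloading occupies $[\alpha_j+s_j+p_j,\alpha_j+A_j]$, so the entire life of $J_j$ is the single contiguous block $[\alpha_j,\alpha_j+A_j]$ of length $A_j$, ending exactly at its completion time $C_j\le C_{max}$. The key claim I would establish is that the active intervals of any two jobs are disjoint; since they all lie in $[0,C_{max}]$ and have total length $\sum_j A_j$, this immediately yields $C_{max}\ge \sum_j A_j$.

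To prove disjointness, suppose for contradiction that $J_i$ and $J_j$ have overlapping active intervals, labelled so that $\alpha_i\le \alpha_j<\alpha_i+A_i$. First, since both loadings use the server and cannot overlap, $\alpha_j\ge \alpha_i+s_i$. Next, the loading of $J_j$, namely $[\alpha_j,\alpha_j+s_j]$, must avoid the unloading of $J_i$, namely $[\alpha_i+s_i+p_i,\alpha_i+A_i]$, on the server; the only option compatible with $\alpha_j<\alpha_i+A_i$ is $\alpha_j+s_j\le \alpha_i+s_i+p_i$, i.e.\ $J_j$ is loaded within the processing window of $J_i$. The heart of the argument is then to show that the two \emph{unloading} operations collide. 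Using $\alpha_j+s_j\le \alpha_i+s_i+p_i$ together with the hypothesis $p_j<t_i$, the unloading of $J_j$ starts at $\alpha_j+s_j+p_j<\alpha_i+s_i+p_i+t_i$, i.e.\ before the unloading of $J_i$ ends; and using $\alpha_j\ge \alpha_i+s_i$ together with $A_j>t_j>p_i$ (the hypothesis $p_i<t_j$), the unloading of $J_j$ ends at $\alpha_j+s_j+p_j+t_j>\alpha_i+s_i+p_i$, i.e.\ after the unloading of $J_i$ begins. Hence the two unloading intervals overlap, which is impossible since the single server can perform only one unloading at a time, giving the desired contradiction. I expect this interval bookkeeping to be the main obstacle, as it is where the condition $p_i<t_j$ (in both orderings $p_j<t_i$ and $p_i<t_j$) is used decisively; the complementary hypothesis $s_i\le p_j$ simply places us in the regime where a loading could a priori be squeezed inside a processing window, the very situation the claim must rule out.

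Finally, for the matching upper bound and the ``all permutations'' statement, I would take an arbitrary permutation and schedule its jobs strictly one after another, starting the loading of the job in position $[k]$ at the completion time of the job in position $[k-1]$. At most one job is ever active, so all server and machine constraints are satisfied trivially, and the makespan equals $\sum_{j\in\mathcal{N}_1}A_j$ irrespective of the chosen order or of how the jobs are split between the two machines. By the lower bound this value is optimal, so every permutation yields an optimal schedule and $C_{max}^{*}=\sum_{j\in\mathcal{N}_1}A_j$, as claimed.
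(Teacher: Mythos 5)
Your proof is correct, and it takes a genuinely different route from the paper's. The paper argues locally: it looks at the jobs in positions $1$ and $2$ of a permutation, observes that although $s_{[2]}\le p_{[1]}$ would let the second loading be squeezed into the first job's processing window, the condition $p_{[2]}<t_{[1]}$ then forces the two unloadings to collide on the server, and concludes that each job can only start after its predecessor's unloading ends, so the schedule is the sequential one of length $\sum_j A_j$. What the paper does not explicitly supply is the certificate that no \emph{other} feasible schedule could beat this value, which is exactly what your first half provides: the pairwise-disjointness lemma for the active intervals $[\alpha_j,\alpha_j+A_j]$, proved for an arbitrary feasible schedule by chasing the three server conflicts (loading--loading, loading--unloading, unloading--unloading) and using $p_j<t_i$ and $p_i<t_j$ at the decisive step, yields $C_{max}\ge\sum_j A_j$ unconditionally, after which the trivial sequential construction matches it. Your observation that $s_i\le p_j$ plays no logical role in the lower bound (it merely delimits this case from Proposition~\ref{prop2}) is accurate and in fact shows your argument proves a slightly stronger statement. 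The paper's approach buys brevity and a picture one can draw; yours buys a complete optimality certificate valid for all feasible schedules, not just list schedules.
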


\begin{proof}

First, we assume that the loading time of the job to be scheduled at position 2 is less than or equal to the processing time of the job scheduled at position 1. Hence, the job to be scheduled at position 2 can start its loading operation in the interval between the end of the loading operation and the start of the unloading operation of the job at position 1. Now, suppose that the processing time of the job at position 2 is strictly less than the unloading time of the job at position 1. Then the job to be scheduled at position 2 can only start its loading operation after the end of the unloading operation of the job at position 1 (see \ref{fig3}). Therefore, all permutations define an optimal schedule, and the optimal makespan is equal to the sum of the lengths of all jobs (i.e., $ C_{max}^{*} = \sum_{j \in \mathcal{N}_1}^{} (A_j)$).

\begin{figure}[h!]
  \centering
  \subfloat[]{\includegraphics[width=0.55\textwidth]{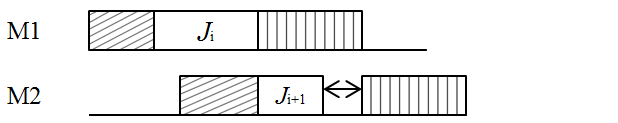}\label{fig31}}
  \hfill
  \subfloat[]{\includegraphics[width=0.55\textwidth]{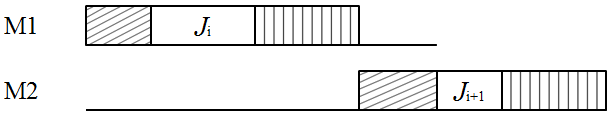}\label{fig3}}
  \caption{Polynomial-time solvable case~3.}
\end{figure}

\end{proof}

\subsection{Lower bound}  \label{lb}

We now introduce a theoretical lower
bound ($LB_T$) on the optimal objective
function value of the problem $P2,S1|s_j,t_j|C_{max}$, namely
$LB_T = \max(LB_1,LB_2)$, where $LB_1$ and $LB_2$  are given in Propositions~\ref{lb1} and~\ref{lb2}, respectively.


\begin{proposition}
\label{lb1}
$$  LB_1 = \frac{\min_{j \in \mathcal{N}_1}{s_j} + \sum_{j \in \mathcal{N}_1}^{} A_j + \min_{j \in \mathcal{N}_1}{t_j}}{2} $$ is a valid lower bound for the problem $P2,S1|s_j,t_j|C_{max}$.
\end{proposition}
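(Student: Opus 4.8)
The plan is to reduce the claimed bound to a single statement about total machine idle time and then to exploit the single-server bottleneck at the two ends of any schedule. By Proposition~\ref{prop1} with $m=2$, every feasible schedule satisfies $2C_{max} = \widehat{IT} + \sum_{j \in \mathcal{N}_1} A_j$, so that $C_{max} = \tfrac{1}{2}\big(\widehat{IT} + \sum_{j \in \mathcal{N}_1} A_j\big)$. Comparing this identity with the expression for $LB_1$, the inequality $C_{max} \geq LB_1$ is seen to be equivalent to $\widehat{IT} \geq \min_{j \in \mathcal{N}_1} s_j + \min_{j \in \mathcal{N}_1} t_j$. The whole argument therefore concentrates on producing at least $\min_{j} s_j$ units of idle time from the start of the schedule and at least $\min_{j} t_j$ units from its end, for every feasible schedule.

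First I would treat the start. At time $0$ no job has been processed, so the server can only load, and being a single server it performs the first loadings of the two machines one after another. The machine whose first loading is carried out second cannot begin any activity while the server is busy with the other loading, hence it is idle for at least the duration of that first loading, which is at least $\min_{j} s_j$; this pins down $\min_j s_j$ units of idle time on one specific machine. The end is handled by the time-reversed version of the same remark: the globally last operation is an unloading occupying the server on $[C_{max}-t_x, C_{max}]$ for some job of unloading time $t_x$ on a machine $X$. The other machine $Y$ completes at some $C_Y \leq C_{max}$, and since its own final unloading also requires the server and cannot overlap that of $X$, non-overlap forces $C_Y \leq C_{max}-t_x$ (the alternative $C_Y - t_y \geq C_{max}$ being impossible). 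Thus $Y$ is idle on all of $[C_Y, C_{max}]$, contributing at least $t_x \geq \min_j t_j$ to its idle time.

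Finally I would combine the two contributions into the bound on $\widehat{IT} = IT_1 + IT_2$. If the \emph{second-loaded} machine and the machine $Y$ are distinct, the two idle amounts land on different summands and $\widehat{IT} \geq \min_j s_j + \min_j t_j$ follows at once. If they coincide, the start contribution occupies an interval near time $0$ and the end contribution occupies $[C_Y, C_{max}]$, and these are disjoint as soon as that machine processes at least one job, so the two amounts still add on the same machine. The degenerate case where a machine processes no job is immediate, since then its idle time equals $C_{max} \geq \sum_{j \in \mathcal{N}_1} A_j \geq \min_j s_j + \min_j t_j$. I expect the main obstacle to be the end argument: making it fully rigorous that the non-overlap of the two final unloadings on the single server forces the runner-up machine to be finished already by time $C_{max}-t_x$, and keeping the bookkeeping clean so that the start and end idle intervals are never double-counted.
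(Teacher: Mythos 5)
Your proof is correct and follows essentially the same averaging argument as the paper: both charge the two machines over $[0,C_{max}]$ with $\sum_{j}A_j$ plus an unavoidable $\min_j s_j$ of idle time at the start (the second machine must wait while the single server performs the other machine's first loading) and $\min_j t_j$ at the end (the runner-up machine must already be finished when the last unloading begins), and then divide by two. The paper only asserts that these two quantities belong to the ``total load,'' whereas you route the argument cleanly through the idle-time identity of Proposition~\ref{prop1} and rigorously justify the end-of-schedule contribution via the non-overlap of the two final unloadings on the single server, also covering the degenerate one-machine case — a welcome tightening of an otherwise informal proof.
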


\begin{proof}

Let $C_{max}^{*}$ denote the objective function value of an optimal schedule of the problem $P2,S1|s_j, t_j|C_{max}$. If there is no idle time between two consecutive jobs scheduled on the same machine (i.e., the gap between the end of the processing time and the start time of the loading operation of two jobs scheduled on the same machine is equal to zero) in an optimal schedule of the problem $P2,S1|s_j,t_j|C_{max}$,~then $C_{max}^{*}$ will be equal to the sum of all loading times, processing times and unloading times plus $\min_{j \in \mathcal{N}_1}{t_j}$ and $\min_{j \in \mathcal{N}_1}{s_j}$ divided by the number of machines $m=2$. The fact of adding $\min_{j \in \mathcal{N}_1}{t_j}$ and $\min_{j \in \mathcal{N}_1}{s_j}$ with all the loading times, processing times and unloading times will constitute the total load to be executed by the two machines. It is then sufficient to divide this charge by $2$ (i.e., two machines) to obtain the aforementioned lower bound. 
		
\end{proof}

\begin{proposition}
\label{lb2}
$$ LB_2 = \sum_{j \in \mathcal{N}_1}^{} (A_j - p_j) $$ is a valid lower bound for the problem $P2,S1|s_j,t_j|C_{max}$.
\end{proposition}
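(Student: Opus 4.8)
The plan is to exploit the single-server bottleneck. First I would rewrite the bound in a more transparent form: since $A_j = s_j + p_j + t_j$, we have $A_j - p_j = s_j + t_j$, so that $LB_2 = \sum_{j \in \mathcal{N}_1}(s_j + t_j)$. Thus the claim reduces to showing that the makespan is at least the total amount of loading plus unloading work, which is exactly the work that the single server is forced to perform.

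Next I would argue that the server is occupied for a total duration of precisely $\sum_{j \in \mathcal{N}_1}(s_j + t_j)$. Indeed, the server carries out all $n$ loading operations (each of duration $s_j$) and all $n$ unloading operations (each of duration $t_j$); in the dummy-job representation these are exactly the $2n$ jobs of $\mathcal{N}_2 \cup \mathcal{N}_3$ processed on the $(m+1)$-th (dummy) machine, with $\sum_{i \in \mathcal{N}_2 \cup \mathcal{N}_3} \rho_i = \sum_{j \in \mathcal{N}_1}(s_j + t_j)$. Because there is a single server, constraints~(\ref{ctv_7})--(\ref{ctv_8}) (equivalently~(\ref{TIF_3})) force these $2n$ operations to occupy pairwise disjoint time intervals.

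Then I would locate all of these intervals inside $[0, C_{max}]$. No operation can start before time $0$, and the unloading operation of each job $J_j$ ends exactly at $C_j$ (by~(\ref{ctv_10}), $C_j = C_{j+2n}$), while its loading operation ends before the corresponding processing and hence before that same unloading; since $C_j \leq C_{max}$ by~(\ref{ctv_2}), every server operation is contained in $[0, C_{max}]$. As the server operations are pairwise disjoint subintervals of an interval of length $C_{max}$, the sum of their lengths cannot exceed $C_{max}$, giving
$$C_{max} \;\geq\; \sum_{j \in \mathcal{N}_1}(s_j + t_j) \;=\; \sum_{j \in \mathcal{N}_1}(A_j - p_j) \;=\; LB_2 .$$

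The argument is essentially a single-machine (server) load bound, so I do not expect a genuine technical obstacle; the only point requiring care is the framing that every server operation finishes no later than $C_{max}$. In particular, I would stress that any idle time on the server only lengthens the span and never reduces the total busy time, so it is the inclusion of the disjoint server intervals in $[0, C_{max}]$ that drives the inequality, rather than any assumption that the server schedule is tight or idle-free.
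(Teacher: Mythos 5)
Your proof is correct, and it takes a genuinely different (and more explicit) route than the paper. The paper's entire proof is the remark that $LB_2$ ``can be easily derived from Proposition~\ref{prop1}'', i.e., from the idle-time identity $\widehat{IT} = mC_{max} - \sum_{j \in \mathcal{N}_1} A_j$; the implicit derivation is to lower-bound the total machine idle time (while the server attends a job on one machine, the other machine can only be processing or idle, giving $\widehat{IT} \geq \sum_{j \in \mathcal{N}_1}(s_j+t_j) - \sum_{j \in \mathcal{N}_1} p_j$) and then substitute into the identity to obtain $C_{max} \geq \sum_{j \in \mathcal{N}_1}(A_j - p_j)$. You bypass the idle-time machinery altogether and use a direct single-machine load bound on the server: the $2n$ loading and unloading operations are pairwise disjoint in time (constraints~(\ref{ctv_7})--(\ref{ctv_8}), or~(\ref{TIF_3})), and each is contained in $[0, C_{max}]$ because the unloading of $J_j$ ends at $C_j \leq C_{max}$ and the loading ends earlier, so $C_{max} \geq \sum_{j \in \mathcal{N}_1}(s_j+t_j) = \sum_{j \in \mathcal{N}_1}(A_j - p_j)$. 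Both arguments ultimately rest on the same fact --- the single server performs one operation at a time --- but yours is self-contained, more elementary, and actually supplies the justification the paper omits, whereas the paper's route has only the cosmetic advantage of tying $LB_2$ to the idle-time equivalence established in Section~4.1. There is no gap; your explicit remark that the argument needs only the containment of the disjoint server intervals in $[0,C_{max}]$, not any tightness of the server schedule, is exactly the point that needs care.
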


\begin{proof}
This lower bound can be easily derived from Proposition~\ref{prop1}.
\end{proof}

Therefore:

$$ LB_T = \max\left(\frac{\min_{j \in \mathcal{N}_1}{s_j} + \sum_{j \in \mathcal{N}_1}^{} A_j + \min_{j \in \mathcal{N}_1}{t_j}}{2},  \sum_{j \in \mathcal{N}_1}^{} (A_j - p_j) \right)$$

\section{Solution approaches}  \label{sec5}

This section presents the solution methods to solve large-sized instances of the problem $P2,S1|s_j,t_j|C_{max}$. First, the solution representation and an initial solution based on an iterative improvement procedure in the insertion neighborhood are presented (Section~\ref{Ini}). Then, two  metaheuristics, namely General variable neighborhood search (GVNS) (Section~\ref{GV}) and Greedy randomized adaptive search procedures (GRASP) are proposed (Section~\ref{GR}). The solution approaches are evaluated by extensive computational experiments described in Section~\ref{sec6}.

\subsection{Solution representation and initial solution }\label{Ini}

A solution of the problem $P2,S1|s_j,t_j|C_{max}$ can be represented as a permutation $\Pi= \{\pi_1,\ldots,\pi_k,\ldots,\pi_n\}$ of the job set $\mathcal{N}_{1}$, where $\pi_k$ represents the
job scheduled at the $k^{th}$ position. Any permutation of jobs is feasible if a particular machine and the single server are available simultaneously. A job at the $k^{th}$ position is scheduled as soon as possible on an available machine taking into account the loading and unloading constraints of the single server. Note that in our problem the loading, processing and unloading operations are not separable.

We now present an iterative improvement procedure based on the insertion neighborhood that is used as initial solution for our suggested GVNS (Section~\ref{GV}). This procedure has been successfully used in different scheduling problems (see \cite{ruiz2007simple,ruiz2008iterated}). In each step, a job $\pi_k$ is removed at random from $\Pi$ and then inserted at all possible $n$ positions. The procedure stops if no improvement is found. It is depicted at Algorithm~\ref{ITP}. In Section~\ref{sec6}, we show the benefit of using the iterative improvement procedure as a solution finding mechanism for the GVNS metaheuristic.

\begin{footnotesize}
	\begin{algorithm}[!h]
	\scriptsize
		\SetAlgoLined \DontPrintSemicolon
		\KwData{An instance $\Pi$ of the problem $P2,S1|s_j,t_j|C_{max}$; number of jobs $n$}
		\KwResult{$\Pi$}
		\SetKwFunction{proc}{}
		\SetKwProg{myproc}{procedure}{}{}
		\myproc{\proc{}}{
			\While{ there is no improvement}{
			\For{$i=1$ to $n$}{	
				Remove a job $\pi_k$ at random from $\Pi$ \;
				$\Pi':$ best permutation obtained by   inserting $\pi_k$ in any possible position in $\Pi$\;
				\If{$ C_{max}(\Pi') < C_{max}(\Pi)$}{		
			$\Pi \leftarrow \Pi'$\;}}}}
		\KwRet $\Pi$
		\caption{Iterative Improvement Procedure}
		\label{ITP}
	\end{algorithm}
\end{footnotesize}

\subsection{General variable neighborhood search} \label{GV}

Variable Neighborhood Search (VNS) is a local search based metaheuristic introduced by \cite{mladenovic1997variable}. It aims
to generate a solution that is a local optimum with respect to one or several neighborhood structures. VNS has been successfully applied to different scheduling problems (see  \cite{todosijevic2016nested,chung2019minimizing,elidrissi2022general,maecker2023unrelated}).  It consists of three main steps: $i)$ Shaking step (diversification), $ii)$ Local Search step (intensification), and $iii)$ Change Neighborhood step (Move or Not). 

We notice that VNS has been less used as a solution method for the PMSSS problem. Mainly, the following metaheuristics have been applied to the PMSSS problem:  \emph{simulated annealing} \citep{kim2012mip,hasani2014hybridization,hasani2014simulated,
hamzadayi2016event,hamzadayi2017modeling,bektur2019mathematical}; \emph{genetic algorithm} \citep{abdekhodaee2006scheduling,huang2010parallel,
hasani2014simulated,hamzadayi2017modeling}; \emph{tabu search} \citep{kim2012mip,hasani2014minimising,
bektur2019mathematical,alharkan2019tabu}; \emph{ant colony optimization} \citep{arnaout2017heuristics}; \emph{geometric
particle swarm optimization} \citep{alharkan2019tabu}; \emph{iterative local search} \citep{silva2019exact}; and \emph{worm optimization} \citep{arnaout2021worm}.

In this section, we propose a General VNS (GVNS) which uses a variable neighborhood descent (VND) as a local search \citep{hansen2017variable}. GVNS starts with an initial solution (generated by  the iterative improvement procedure or randomly). Then, the shaking procedure and VND are applied to try to improve the current solution. Finally, this procedure continues until all predefined neighborhoods have been explored and a stopping criterion is met (e.g., a time limit). As far as we know, this is the first study in the literature to propose a GVNS for a parallel machine scheduling problem involving a single server. 

\subsubsection{Neighborhood structures}
\label{NST}

Three commonly used neighborhood structures in the literature are adapted to the problem $P2,S1|s_j,t_j|C_{max}$. The first one is an \textsc{Interchange}-based neighborhood, the second one is  an \textsc{Insert}-based neighborhood and the last one is a \textsc{Reverse}-based neighborhood. These structures have been widely applied to solve different scheduling problems  (see \cite{hasani2014hybridization,hasani2014simulated,alharkan2019tabu}). 

The neighborhood structures are defined as follows:

\begin{itemize}
	\item \textsc{Interchange}($\Pi$) :  It consists of selecting a pair of jobs and exchanging their positions.
We consider a solution of the problem denoted as $\Pi_s= \{\pi_1^s,\ldots,\pi_k^s,\ldots,\pi_n^s\}$, and one of its neighbors $\Pi_t= \{\pi_1^t,\ldots,\pi_k^t,\ldots,\pi_n^t\}$. We fix two different positions ($a \neq b$), and we exchange the jobs scheduled at the two positions (i.e., $\pi_{a}^t = \pi_{b}^s$, $\pi_{b}^s = \pi_{a}^t$ and $\forall x, x \neq (a,b)$ $\pi_x^{t}= \pi_x^{s}$).
	\item \textsc{Reverse}($\Pi$) :  It consists of all solutions obtained from the solution $\Pi$ reversing a subsequence of $\Pi$. More precisely, given two jobs $\pi_a$ and $\pi_b$, we construct a new sequence by first deleting the connection between $\pi_a$ and its successor $\pi_{a+1}$ and the connection between $\pi_j$
	and its successor $\pi_{b+1}$. Next, we connect $\pi_a$ with $\pi_b$ and $\pi_{a+1}$ with $\pi_{b+1}$.
	\item \textsc{Insert}($\Pi$) : It consists of all solutions obtained from the solution $\Pi$ by removing a job and inserting it at another position in the sequence. We consider a solution $\Pi_s= \{\pi_1^s,\ldots,\pi_k^s,\ldots,\pi_n^s\}$, and one of its neighbors  $\Pi_t= \{\pi_1^t,\ldots,\pi_k^t,\ldots,\pi_n^t\}$. If $ a < b$, $\pi_{a}^t = \pi_{b}^s$, $\pi_{a+1}^t = \pi_{a}^s, \ldots, \pi_{b}^t = \pi_{b-1}^s$. Otherwise, if $b < a$, $\pi_{b}^t = \pi_{b+1}^s, \ldots, \pi_{a-1}^t = \pi_{a}^s$, $\pi_{a}^t = \pi_{b}^s$.

\end{itemize}

\subsubsection{Variable neighborhood descent}
\label{VNDh}

We present here the variable neighborhood descent procedure designed for the problem  $P2,S1|s_j,t_j|C_{max}$. It uses the neighborhood structures described in Section~\ref{NST}. VND has a solution which is a local optimum with respect to the \textsc{Interchange}, \textsc{Insert} and \textsc{Reverse} neighborhood structures. The order in which the neighborhoods are
explored and the way how to move from one neighborhood to another one modify the performance of VND. For this problem, after performing preliminary experiments that are not presented here (as they concern minor parameters with respect to the overall approaches), the following settings are proposed. First, we use a basic sequential VND as a strategy to switch from one neighborhood to another one. Second, the following neighborhood structures order is chosen in the VND procedure: $i)$ \textsc{Interchange}, $ii)$ \textsc{Insert} and $iii)$ \textsc{Reverse}. Finally,  the first-improvement strategy (stop generating neighbors as soon as a current solution can be improved in terms of quality) turns out to be  better than the best-improvement strategy (generate all the neighbors and choose the best one). The overall VND pseudo-code is presented in Algorithm~\ref{VND}.

\begin{footnotesize}
	\begin{algorithm}[!h]
	\scriptsize
		\SetAlgoLined \DontPrintSemicolon
		\KwData{A sequence $\Pi$}
		\KwResult{$\Pi$}
		\SetKwFunction{proc}{\textsc{Basic\_Sequential\_VND}}
		\SetKwProg{myproc}{procedure}{}{}
		\myproc{\proc{}}{
			\Repeat{there is no improvement}{
				$l \leftarrow 1$;\;
				\While { $l \leq 3$}
				{   
					\Switch{$l$}{
            \Case{$l=1$}{
                $\Pi' \leftarrow \textsc{interchange}(\Pi)$;\;
               \Break; 
            }
            \Case{$l=2$}{
                $\Pi' \leftarrow \textsc{insert}(\Pi)$;\;
                 \Break;
            }
            \Case{$l=3$}{
                $\Pi' \leftarrow \textsc{reverse}(\Pi)$;\;
                 \Break; 
            }
        }
        $ l \leftarrow l+1 $;\;
       \If{$ C_{max}(\Pi') < C_{max}(\Pi)$}{		
				$\Pi \leftarrow \Pi'$;\;
				$l \leftarrow 1$;\;
		}
		}}}
		\KwRet $\Pi$;
		\caption{Sequential VND}
		\label{VND}
	\end{algorithm}
\end{footnotesize}

\subsubsection{The proposed GVNS and shaking procedure}

To escape from the local optima and have a chance to obtain a global optimum, we propose the following shaking procedure depicted in Algorithm~\ref{shaking}. This procedure consists of generating $k$ random jumps from the current solution $\Pi'$
using the neighborhood structure \textsc{Reverse} (i.e., $k$ random iterations are performed in \textsc{Reverse}). After preliminary experiments, only one neighborhood structure is used (\textsc{Reverse}), and the value of the diversification factor $k$ is chosen as 15, since they offer the best combination between solution quality (i.e., the quality of the obtained solution) and speed (i.e., the time to generate a feasible solution). 

The overall pseudo-code of GVNS as it is designed
to solve the problem $P2,S1|s_j,t_j|C_{max}$ is depicted in Algorithm~\ref{GVNS}. After generating an initial solution (Step 1), a shaking procedure is then applied (Step 2). Once the shaking is performed, VND (Algorithm~\ref{VND}) starts exploring the three proposed neighborhood structures (Step 3). Step~2 and Step~3 until a stopping criterion (CPU)  is  met (the time limit denoted as $t_{max}$).
Since GVNS is a trajectory-based procedure, starting from an initial solution is needed.  In this paper, we compare two variants of GVNS, namely, one starting from the iterative improvement procedure which we denote as GVNS~I, and one starting from a random solution which we denote as GVNS~II. GVNS~I and GVNS~II are both compared in Section~\ref{sec6}. 

\begin{algorithm}[!h]
\scriptsize
	\SetAlgoLined\DontPrintSemicolon
	\KwData{$\Pi$, $k$ : diversification parameter}
	\KwResult{$\Pi$}
	\SetKwFunction{proc}{\textsc{Shaking}}
	\SetKwProg{myproc}{procedure}{}{}
	\myproc{\proc{}}{
		\For{$i=1$ to $k$}{		
			$\Pi'$ : a random solution with respect to the neighborhood structure \textsc{Reverse};\; 
			}
		$\Pi \leftarrow \Pi'$;\;	
		}
	\KwRet $\Pi$\;
	\caption{Shaking}
	\label{shaking}
\end{algorithm} 

\begin{footnotesize}
	\begin{algorithm}[htbp]
	\scriptsize
		\SetAlgoLined \DontPrintSemicolon
		\KwData{A sequence $\Pi$ of the problem $P2,S1|s_j,t_j|C_{max}$, $t_{max}$: time limit, $k_{max}$}
		\KwResult{$\Pi$, $C_{max}(\Pi)$}
		\SetKwFunction{proc}{GVNS}
		\SetKwProg{myproc}{procedure}{}{}
		\myproc{\proc{}}{
			$ \Pi \leftarrow$ \textsc{Initial Solution();} \;
		\Repeat{ $CPU > t_{max}$}
			{	
			$ k \leftarrow 1$;  \;	
			\Repeat{$k > k_{max}$}
			{		
				$\Pi' \leftarrow$ \textsc{Shaking}$(\Pi,k)$;\;
				$\Pi'' \leftarrow$ \textsc{Basic\_Sequential\_VND}($\Pi'$);\;
				\eIf{$C_{max}(\Pi'') < C_{max}(\Pi) $}{		
					$\Pi \leftarrow \Pi''$;\;
					$ k \leftarrow 1$; \;	
				}{
				$ k \leftarrow k + 1$; } \;

		}}}
		\KwRet $\Pi$
		\caption{General VNS for the problem $P2,S1|s_j,t_j|C_{max}$}
		\label{GVNS}
	\end{algorithm}
\end{footnotesize}

\subsection{Greedy randomized adaptive search procedures}
\label{GR}

The Greedy randomized adaptive search procedure (GRASP) is a local search metaheuristic introduced by \citep{feo1995greedy}. It has been suggested to solve different scheduling problems \citep{baez2019hybrid,yepes2020grasp}. Like GVNS, GRASP has two main phases: diversification phase which is based on a greedy randomized construction procedure and an intensification phase based on the use of a local search procedure. Both phases are repeated in every iteration until a stopping criterion is met (e.g., the number of iterations or/and a time limit). In this section, we propose a hybridization of the GRASP metaheuristic with the VND procedure for the problem $P2,S1|s_j,t_j|C_{max}$. Indeed, VND is used as a local search method (as it contributes to a significant improvement of the quality of solutions in the preliminary experiments). The overall pseudo-code of our designed GRASP with VND as a local search is presented in the following (Algorithm~\ref{grasp1}). To the best of our knowledge, our paper is the first one in the literature implementing a GRASP metaheuristic for a variant of the PMSSS problem.

\begin{footnotesize}
	\begin{algorithm}[!h]
	\scriptsize
		\SetAlgoLined \DontPrintSemicolon
		\KwData{A sequence $\Pi$ of the problem $P2,S1|s_j,t_j|C_{max}$, $t_{max}$: time limit}
		\KwResult{$\Pi$}
		\SetKwFunction{proc}{GRASP}
		\SetKwProg{myproc}{procedure}{}{}
		\myproc{\proc{}}{
			\Repeat{$ CPU > t_{max}$}
			{		
				$\Pi' \leftarrow$ \textsc{Greedy\_Randomized\_Construction}$(\Pi)$\;
				$\Pi'' \leftarrow$ \textsc{Basic\_Sequential\_VND}($\Pi'$)\;
				\If{$C_{max}(\Pi'') < C_{max}(\Pi) $}{		
					$\Pi \leftarrow \Pi''$\;
				}	
		}}
		\KwRet $\Pi$
		\caption{Greedy Randomized Adaptive Search Procedures}
		\label{grasp1}
	\end{algorithm}
\end{footnotesize}

\subsubsection{Greedy randomized construction}

The Greedy randomized construction (GRC) procedure of GRASP is presented in Algorithm~\ref{grasp2}. A solution $\Pi'=\{\pi'_1,\ldots,\pi'_k,\ldots,\pi'_n\}$ is generated iteratively. Indeed, at each iteration of the GRC procedure, a new job is added. First, all jobs from the initial list $\Pi$ are initially inserted into the Candidate List ($CL$). Then, the incremental cost associated with the incorporation of a job $\pi_s$ from $CL$ into the solution under construction is calculated. The incremental cost ($IC$) is calculated taking into account the loading and unloading constraints of the single server. Once the $IC$ of all jobs is calculated, we choose the largest and smallest ones, which are denoted by  $\Phi_{max}$ and $\Phi_{min}$, respectively. A Restrict Candidate List ($RCL$) is then created with the best candidate jobs that satisfy the following Inequality~(\ref{GR}).

\begin{equation}
\label{GR}
\Phi(\pi_s) \leq \Phi_{min} + \alpha(\Phi_{max}-\Phi_{min}) \quad \forall \pi_s \in CL 
\end{equation}

The parameter $\alpha$ controls the amounts of greediness and randomness  in the GRC procedure. $\alpha_g$ is generated uniformly  at random from the interval [0, 1] (a purely random construction corresponds to $\alpha = 1$, whereas the greedy construction corresponds to $\alpha = 0$). Hence, a job $\pi_s$ from $RCL$ is selected and scheduled on the first available machine taking into consideration the loading and unloading operations performed by the single server. Finally, $\pi_s$ is removed from $CL$ and added to the output solution $\Pi'$. 
GRASP stops when all jobs from $CL$ are scheduled on the two available machines. In order to improve the solution generated by the GRC procedure, the VND procedure (Algorithm~\ref{VND}) with the same neighborhood structures as described in Section~\ref{VNDh} is used. 
The stopping criterion of GRASP is the CPU time limit $t_{max}$.  
 
\begin{footnotesize}
	\begin{algorithm}[!h]
	\scriptsize
		\SetAlgoLined \DontPrintSemicolon
		\KwData{A job sequence $\Pi$ }
		\KwResult{$\Pi'$}
		\SetKwFunction{proc}{\textsc{Greedy\_Randomized\_Construction}}
		\SetKwProg{myproc}{procedure}{}{}
		\myproc{\proc{}}{
			\textbf{Initialization}: $ \Pi' \leftarrow \varnothing $, $ \alpha_g \in [0, 1] $,  $CL \leftarrow \Pi$;\;
			Evaluate the incremental cost $\Phi(\pi_k)$ of each job $\pi_k \in CL$; \;  
			\Repeat{$ CL = \varnothing $}
			{	
				Calculate $\Phi_{min} = \min_{\pi_k \in CL} \Phi(\pi_k)$;\;
				Calculate $\Phi_{max} = \max_{\pi_k \in CL} \Phi(\pi_k)$;\;
				RCL $\leftarrow	\{\pi_j \in CL | \Phi(\pi_j) \leq \Phi_{min} + \alpha(\Phi_{max}-\Phi_{min}) \} $; \;
				Select a job $\pi_s$ from the RCL list at random;\;
				Schedule the selected job $\pi_s$ on the first available machine at the earliest possible time;\;
				$CL \leftarrow CL \setminus \{\pi_s\} $; \;
				$\Pi' \leftarrow	\Pi' \cup \{\pi_s\}$; \;
				Reevaluate the incremental cost $\Phi(\pi_k)$ for each remaining job $\pi_k \in CL$;\;
		}}
		\KwRet $\Pi'$
		\caption{Greedy Randomized Construction}
		\label{grasp2}
	\end{algorithm}
\end{footnotesize}

\section{Computational experiments} \label{sec6}

This section evaluates the computational performance of the mathematical formulations and the metaheuristic approaches. First, the characteristics of the test instances are provided (Section~\ref{inst}). The performance of the mathematical formulations is presented and discussed in Section~\ref{EA}. Finally, the performance of the metaheuristic approaches is summarized and discussed in Section~\ref{MA}. The computational experiments were conducted on a personal computer Intel(R) Core(TM) with i7-4600M 2.90 GHz CPU and 16GB of RAM, running Windows 7. To solve the $CF$, $CF^{+}$, $TIF$, and $TIF^{+}$ formulations, we have used the Concert Technology library of CPLEX 12.6 version with default settings in C++. 
The time limit for solving the formulations was set to 3600~s. The meheuristics GVNS~I, GVNS~II, and GRASP were implemented in the C++ language. We recall that in the proposed GVNS~I, the initial solution is obtained using an iterative improvement procedure. In addition, in GVNS~II the initial solution is randomly generated. For the proposed GVNS~I, GVNS~II and GRASP, the time limit ($t_{max}$) is set  to 10 seconds for small-sized instances ($n \in \{8,10,12,25\}$), $t_{max}$ is set  to 100 seconds for medium-sized instances ($n \in \{50,100\}$), and $t_{max}$ is set  to 300 seconds for large-sized instances ($n \in \{250,500\}$). According to the best practice of the related literature, the metaheuristics were executed 10 times in all experiments, except for the small-sized instances for which one run is sufficient, and the best and average results are
provided.

\subsection{Benchmark instances} 
\label{inst}

To the best of our knowledge, there are no publicly available benchmark instances for the problem $P2,S1|s_j,t_j|C_{max}$ involving a single server for the loading and unloading operations. Therefore, we have generated a set of instances according to the recent literature, as proposed by \cite{kim2021scheduling} and \cite{lee2021heuristic}. 

The instances are characterized by the following features:

\begin{itemize}
	\item The number of jobs $n \in\{8,10,12,25,50,100,250,500\}$.
	\item The integer processing times $p_j$ are uniformly distributed in the interval [10, 100].
	\item The integer loading times $s_j = \alpha \times p_j$ ($\alpha \in \{\alpha_1,\alpha_2,\alpha_3\}$), where $\alpha$ is a coefficient randomly generated by the uniform distribution with  $\alpha_1 \in [0.01, 0.1]$,  $\alpha_2 \in [0.1, 0.2]$, and  $\alpha_3 \in [0.1, 0.5]$.
	\item The integer unloading times $t_j = \alpha \times p_j$.
\end{itemize}

Note that $\alpha_1$, $\alpha_2$, and $\alpha_3$, respectively, correspond  to small, moderate, and large loading/unloading times variance (see \cite{kim2021scheduling,lee2021heuristic}).  For each combination of $(n, \alpha_i)$ $\forall i \in \{1,\ldots,3\}$, ten instances were created, resulting in a total of 240 new instances. We recall that small-sized instances are those with $n \in \{8,10,12,25\}$, medium-sized instances are those with $n \in \{50,100\}$ and large-sized instances are  those with  $n \in \{250,500\}$.

\subsection{Exact approaches}
\label{EA}

In Table~\ref{tab-1}, we compare the performance of the $CV$, $CV^{+}$, $TIF$ and $TIF^{+}$ formulations for small and medium-sized instances. Note that the results for large-sized instances $n \in \{250,500\}$ are not reported, since all formulations are not able to produce a feasible solution within the time limit of 3600~s. The results are provided for each number $n$ of
jobs and for each loading/unloading times variance ($\alpha \in \{\alpha_1,\alpha_2,\alpha_3\}$). In addition, for each formulation, the following information is given: $i)$ the number of instances solved to optimality, $\#opt$; $ii)$ the average time required to obtain an optimal solution, $t(s)$; $iii)$ the number of instances with a feasible solution, $\#is$; and $iv)$ the average percentage gap to optimality, $gap_{LB}(\%)$.

\begin{table}[!h]
\caption{Comparison of $CF$, $CF^{+}$, $TIF$, and $TIF^{+}$ for $n \in \{8, 10,12,25,50,100\}$.}
\centering
\renewcommand{\tabcolsep}{1.3pt}
{\scriptsize
\renewcommand{\arraystretch}{1.0}
    \begin{tabular}{|c|c|ccc|ccc|ccc|ccc|}
    \hline
    \multicolumn{2}{|c|}{Instances} & \multicolumn{3}{c|}{$CF$} & \multicolumn{3}{c|}{$CF^{+}$} & \multicolumn{3}{c|}{$TIF$} & \multicolumn{3}{c|}{$TIF^{+}$} \\
    \hline
    $n$ & $\alpha$ & $\#opt $ & $t(s)$ & $\#is$ [$gap_{LB}(\%)$] & $\#opt $ & $t(s)$ & $\#is$ [$gap_{LB}(\%)$] & $\#opt $ & $t(s)$ & $\#is$ [$gap_{LB}(\%)$] & $\#opt $ & $t(s)$ & $\#is$ [$gap_{LB}(\%)$] \\
    \hline
    8 & $\alpha_1$ & 10 & 6.47 & 0 [0] & 10 & 0.53 & 0 [0] & 10 & 4.35 & 0 [0] & 10 & 3.26 & 0 [0] \\
      & $\alpha_1$ & 10 & 5.84 & 0 [0] & 10 & 0.74 & 0 [0] & 10 & 12.81 & 0 [0] & 10 & 7.09 & 0 [0] \\
      & $\alpha_1$ & 10 & 4.92 & 0 [0] & 10 & 0.87 & 0 [0] & 10 & 36.86 & 0 [0] & 10 & 11.21 & 0 [0] \\
      \hline
    10 & $\alpha_1$ & 10 & 880.51 & 0 [0] & 10 & 6.58 & 0 [0] & 10 & 15.09 & 0 [0] & 10 & 6.81 & 0 [0] \\
      & $\alpha_1$ & 10 & 166.43 & 0 [0] & 10 & 40.07 & 0 [0] & 10 & 28.64 & 0 [0] & 10 & 13.83 & 0 [0] \\
      & $\alpha_1$ & 10 & 85.68 & 0 [0] & 10 & 10.70 & 0 [0] & 10 & 97.43 & 0 [0] & 10 & 40.66 & 0 [0] \\
      \hline
    12 & $\alpha_1$ & 0 & 3600 & 10 [14.74] & 8 & 1648.67 & 2 [0.29] & 10 & 51.74 & 0 [0] & 10 & 12.4 & 0 [0] \\
      & $\alpha_1$ & 0 & 3600 &  10 [18.34] & 8 & 1666.33 & 2 [0.18] & 10 & 160.67 & 0 [0] & 10 & 44.74 & 0 [0] \\
      & $\alpha_1$ & 4 & 1469.49 &  6 [20.02] & 7 & 893.33 & 3 [0.69] & 10 & 555.21 & 0 [0] & 10 & 149.91 & 0 [0] \\
    \hline
    25 & $\alpha_1$ & 0 & 3600 &  10 [81.39] & 0 & 3600 &  10 [0.16] & 4 & 2462.14 & 6 [37.69] & 6 & 1717.90 & 4 [24.72] \\
      & $\alpha_1$ & 0 & 3600 &  9 [77.37] & 0 & 3600 &  10 [0.26] & 1 & 3537 & 9 [36.89] & 1 & 2456.37 & 9 [31.23] \\
      & $\alpha_1$ & 0 & 3600 &  1 [66.66] & 0 & 3600 &  10 [0.91] & 0 & 3600 & 10 [42.10] & 0 & 3600 & 10 [38.26] \\
    \hline
    50 & $\alpha_1$ & 0 & 3600 &  10 [91.79] & 0 & 3600 &  10 [0.26] & 0 & 3600 &  10 [66.46] & 0 & 3600 &  10 [52.03] \\
      & $\alpha_1$ & 0 & 3600 &  9 [91.53] & 0 & 3600 &  10 [0.60] & 0 & 3600 &  10 [71.87] & 0 & 3600 &  10 [58.81] \\
      & $\alpha_1$ & 0 & 3600 &  10 [90.36] & 0 & 3600 &  10 [3.40] & 0 & 3600 &  10 [73.41] & 0 & 3600 &  10 [55.26] \\
      \hline
    100 & $\alpha_1$ & * & * & * & 0 & 3600 &  10 [0.96] & 0 & 3600 &  10 [79.63] & 0 & 3600 &  10 [94.90] \\
      & $\alpha_1$ & * & * & * & 0 & 3600 &  10 [3.79] & * & * & * & 0 & 3600 &  10 [99.98] \\
      & $\alpha_1$ & * & * & * & 0 & 3600 &  6 [15.15] & * & * & * & 0 & 3600 &  10 [100.00] \\
    \hline
\end{tabular}
}
\label{tab-1}
\end{table}

The following observations can be made:

\begin{itemize}
\item For $n=8$ : Based on the formulations $CF$, $CF^{+}$, $TIF$ and $TIF^{+}$, CPLEX is able to find an optimal solution for any instance. It can be noticed that
for the improved formulations $CV^{+}$ and $TIF^{+}$, CPLEX is able to produce an optimal solution in significantly less computational time in comparison with the original formulation. The best overall performance is demonstrated by $CF^{+}$ with an average computational time required to find an optimal solution equal to 0.71 s.

\item For $n=10$ : For all formulations, CPLEX is able to find an optimal solution for any instance. The best overall performance is demonstrated by $CF^{+}$ for $\alpha_1$, $TIF^{+}$ for $\alpha_2$, and $CF^{+}$ for $\alpha_3$. Based on the formulation $CF^{+}$, CPLEX is able to produce an optimal solution in significantly  less computational time in comparison with $CF$.  The best overall performance is demonstrated by $CF^{+}$ with an average computational time required to find an optimal solution equal to 19.11 s.

\item For $n=12$: $TIF$ and $TIF^{+}$ are
the only formulations for which CPLEX is able to find an optimal solution for any instance. Based on the formulation $CF$, CPLEX is able to produce an optimal solution for only 4 instances (among 30 ones). In addition, based on the improved formulation $CF^{+}$, CPLEX is able to find an optimal solution for 24 instances (among 30 ones). Note that the improved formulation $CF^{+}$ reduced significantly the value of $gap_{LB}(\%)$. The best overall performance is demonstrated by $TIF^{+}$ with an average computational time required to find an optimal solution equal to 69.01 s. 
 
\item For $n=25$ : $TIF^{+}$ is the only
formulation for which CPLEX is able to find an
optimal solution for 6 instances for $\alpha_1$, and 1 instance for $\alpha_2$. In addition, based on the formulations $CF$ and $CF^{+}$, CPLEX is able to produce a feasible solution for any instance at best. It can be noticed that the improved formulation $CF^{+}$ reduced significantly the value of $gap_{LB}(\%)$ as compared with the original one. The best overall performance is demonstrated by $TIF^{+}$.

\item For $n=50$: For all formulations, CPLEX is able to find a feasible solution for any instance at best. The improved formulations $CF^{+}$ reduced significantly the value of $gap_{LB}(\%)$. Note that based on the formulation  $CF$, CPLEX is not able to find a feasible solution for 1 instance. The best overall performance is demonstrated by $CF^{+}$ with small values of $gap_{LB}(\%)$.

\item For $n=100$ : Based on the formulation $CF$, CPLEX is not able to produce a feasible solution for any instance. Based on formulation $CF^{+}$, CPLEX is able to find a feasible solution for 26 instances for $\alpha_3$ (among the 30 ones).  In addition, based on formulation $TIF$, CPLEX is not able to find a feasible solution for all instances for $\alpha_2$ and $\alpha_3$. It can be noticed that the improved formulations $CF^{+}$ reduced significantly the value of $gap_{LB}(\%)$.

\end{itemize}

To sum up, the computational comparison of $CF$, $CF^{+}$, $TIF$ and $TIF^{+}$ shows that their performance is related to the number of jobs and the loading/unloading times  variance. In addition, using the proposed strengthening constraints (\ref{val_1}) and (\ref{val_2}), the $CF^{+}$ formulation produced lower bounds better than all other formulations for all instances (instances with a feasible solution). Moreover, for $n \in \{8,10\}$, the best formulation in terms of the average computing time required to obtain an optimal solution is $CF^{+}$. For $n=12$, the best performance is demonstrated by $TIF^{+}$ (since it solved to optimality all instances). For $n=25$, the best performance is demonstrated by $TIF^{+}$ (since it solved 7 instances among the 30 ones to optimality). For $n \in \{50,100\}$, the best formulation in terms of the average percentage gap to optimality is demonstrated by $CF^{+}$. It turns out that the formulations $CF^{+}$ and $TIF^{+}$ are complementary. Subsequently, we compare only $CF^{+}$ and $TIF^{+}$ with the other approaches since they produce the best results. Note that $TIF^{+}$ was able to prove optimality within 3600~s for 7 instances among the 30 ones with $n=25$. Therefore, metaheuristics are needed being able to find an approximate solution in a very short computational time.

\subsection{Metaheuristic approaches}
\label{MA}

\subsubsection{Results for small-sized instances}

In Tables~\ref{tab-2},~\ref{tab-3},~\ref{tab-4}, we compare the performance of $CF^{+}$, $TIF^{+}$, GVNS~I, GVNS~II, and GRASP for instances with $n \in \{8,10,12\}$, where an optimal solution can be found within 3600~s by $CF^{+}$ and $TIF^{+}$. Each instance is characterized by the following information. The ID (for example I1 denotes the $1^{th}$ instance with $n = 8$ and $\alpha = \alpha_1$); the number $n$ of jobs; the loading/unloading times variance; the value of the theoretical lower bound $LB_{T}$ computed as in Section~\ref{lb}. Next, the optimal makespan (denoted by $C_{max}^{*}$) obtained with the $CF^{+}$ and $TIF^{+}$ formulations is given. Finally, the computational time (CPU) to find an optimal solution is given for $CF^{+}$, $TIF^{+}$, GVNS~I, GVNS~II, and GRASP. Note that after each 10 instances of Table~\ref{tab-2},~\ref{tab-3},~\ref{tab-4} (for example I1,$\ldots$,I10), the average results for each column are reported (the best results are indicated in bold).

\begin{table}[!h]
\caption{Comparison of $CF^{+}$, $TIF^{+}$, GVNS~I, GVNS~II, and GRASP in terms of CPU time for $n=8$.}
\centering
\renewcommand{\tabcolsep}{1.5pt}
{\scriptsize
\renewcommand{\arraystretch}{0.75}
\begin{tabular}{|ccc|c|c|c|c|c|c|c|}
\hline
\multicolumn{5}{|c|}{Instance} & $CF^{+}$ & $TIF^{+}$ & GVNS~I & GVNS~II & GRASP \\
\hline
\multicolumn{1}{|c|}{ID} & \multicolumn{1}{c|}{$n$} & $\alpha$ & $LB_{T}$ & $C_{max}^{*}$ & \multicolumn{5}{c|}{CPU} \\
 \hline  
    \multicolumn{1}{|c|}{I1} & \multicolumn{1}{c|}{} &   & 295 & 295 & 0.456 & 4.114 & 0.000 & 0.001 & 0.002 \\
    \multicolumn{1}{|c|}{I2} & \multicolumn{1}{c|}{} &   & 288.5 & 289 & 0.612 & 5.486 & 0.000 & 0.006 & 0.054 \\
    \multicolumn{1}{|c|}{I3} & \multicolumn{1}{c|}{} &   & 258.5 & 259 & 0.529 & 4.274 & 0.000 & 0.000 & 0.000 \\
    \multicolumn{1}{|c|}{I4} & \multicolumn{1}{c|}{} &   & 217 & 217 & 0.418 & 2.821 & 0.006 & 0.001 & 0.001 \\
    \multicolumn{1}{|c|}{I5} & \multicolumn{1}{c|}{8} & $\alpha_1$ & 236.5 & 237 & 0.346 & 2.945 & 0.000 & 0.000 & 0.000 \\
    \multicolumn{1}{|c|}{I6} & \multicolumn{1}{c|}{} &   & 237 & 238 & 0.749 & 3.033 & 0.000 & 0.000 & 0.000 \\
    \multicolumn{1}{|c|}{I7} & \multicolumn{1}{c|}{} &   & 216 & 218 & 0.306 & 2.953 & 0.001 & 0.001 & 0.000 \\
    \multicolumn{1}{|c|}{I8} & \multicolumn{1}{c|}{} &   & 229.5 & 230 & 0.629 & 3.061 & 0.001 & 0.000 & 0.000 \\
    \multicolumn{1}{|c|}{I9} & \multicolumn{1}{c|}{} &   & 193 & 193 & 0.731 & 1.935 & 0.000 & 0.001 & 0.001 \\
    \multicolumn{1}{|c|}{I10} & \multicolumn{1}{c|}{} &   & 194.5 & 196 & 0.560 & 1.942 & 0.000 & 0.001 & 0.001 \\
     \hline
    \multicolumn{3}{|c|}{\textbf{Avg.}} & \textbf{236.55} & \textbf{237.2} & 0.534 & 3.256 & \textbf{0.001} & \textbf{0.001} & 0.006 \\
     \hline
    \multicolumn{1}{|c|}{I11} & \multicolumn{1}{c|}{} &   & 382.5 & 383 & 1.048 & 9.911 & 0.013 & 0.052 & 0.091 \\
    \multicolumn{1}{|c|}{I12} & \multicolumn{1}{c|}{} &   & 277 & 277 & 0.540 & 5.428 & 0.002 & 0.004 & 0.007 \\
    \multicolumn{1}{|c|}{I13} & \multicolumn{1}{c|}{} &   & 274.5 & 276 & 0.561 & 6.058 & 0.000 & 0.004 & 0.007 \\
    \multicolumn{1}{|c|}{I14} & \multicolumn{1}{c|}{} &   & 326.5 & 328 & 0.726 & 5.673 & 0.001 & 0.001 & 0.000 \\
    \multicolumn{1}{|c|}{I15} & \multicolumn{1}{c|}{8} & $\alpha_2$ & 259.5 & 260 & 0.519 & 3.853 & 0.001 & 0.003 & 0.001 \\
    \multicolumn{1}{|c|}{I16} & \multicolumn{1}{c|}{} &   & 311.5 & 312 & 0.845 & 6.795 & 0.000 & 0.003 & 0.001 \\
    \multicolumn{1}{|c|}{I17} & \multicolumn{1}{c|}{} &   & 319 & 320 & 0.638 & 10.524 & 0.003 & 0.001 & 0.004 \\
    \multicolumn{1}{|c|}{I18} & \multicolumn{1}{c|}{} &   & 284.5 & 286 & 0.831 & 5.876 & 0.002 & 0.003 & 0.003 \\
    \multicolumn{1}{|c|}{I19} & \multicolumn{1}{c|}{} &   & 334.5 & 336 & 0.851 & 7.455 & 0.001 & 0.001 & 0.001 \\
    \multicolumn{1}{|c|}{I20} & \multicolumn{1}{c|}{} &   & 347 & 349 & 0.812 & 9.326 & 0.003 & 0.002 & 0.001 \\
    \hline
    \multicolumn{3}{|c|}{\textbf{Avg.}} & \textbf{311.65} & \textbf{312.7} & 0.737 & 7.090 & \textbf{0.003} & 0.007 & 0.011 \\
     \hline
    \multicolumn{1}{|c|}{I21} & \multicolumn{1}{c|}{} &   & 324 & 325 & 0.979 & 8.029 & 0.002 & 0.002 & 0.080 \\
    \multicolumn{1}{|c|}{I22} & \multicolumn{1}{c|}{} &   & 406.5 & 408 & 1.081 & 10.123 & 0.127 & 0.004 & 8.597 \\
    \multicolumn{1}{|c|}{I23} & \multicolumn{1}{c|}{} &   & 324 & 325 & 0.990 & 15.213 & 0.006 & 0.008 & 0.405 \\
    \multicolumn{1}{|c|}{I24} & \multicolumn{1}{c|}{} &   & 246.5 & 248 & 0.634 & 3.621 & 0.002 & 0.016 & 0.001 \\
    \multicolumn{1}{|c|}{I25} & \multicolumn{1}{c|}{8} & $\alpha_3$ & 350.5 & 352 & 0.839 & 15.506 & 0.005 & 0.004 & 0.002 \\
    \multicolumn{1}{|c|}{I26} & \multicolumn{1}{c|}{} &   & 331.5 & 335 & 0.769 & 13.742 & 0.024 & 0.380 & 1.893 \\
    \multicolumn{1}{|c|}{I27} & \multicolumn{1}{c|}{} &   & 264.5 & 266 & 0.810 & 3.301 & 0.002 & 0.007 & 0.299 \\
    \multicolumn{1}{|c|}{I28} & \multicolumn{1}{c|}{} &   & 293 & 300 & 0.861 & 25.168 & 0.000 & 0.005 & 0.201 \\
    \multicolumn{1}{|c|}{I29} & \multicolumn{1}{c|}{} &   & 385 & 387 & 0.830 & 9.247 & 0.008 & 0.010 & 0.800 \\
    \multicolumn{1}{|c|}{I30} & \multicolumn{1}{c|}{} &   & 331 & 331 & 0.898 & 8.179 & 0.005 & 0.003 & 0.499 \\
    \hline
    \multicolumn{3}{|c|}{\textbf{Avg.}} & \textbf{325.65} & \textbf{327.70} & 0.869 & 11.213 & \textbf{0.018} & 0.044 & 1.278 \\
    \hline
\end{tabular}
}
\label{tab-2}
\end{table}

The following observations can be made:

\begin{itemize}
\item In Table~\ref{tab-2}~:~$CF^{+}$, $TIF^{+}$, GVNS~I, GVNS~II, and GRASP are compared. It can be noticed that the three proposed metaheuristics can reach an optimal solution for any instance in significantly less computational time than the two formulations. For example, for $(n=8,\alpha=\alpha_3$), the average computational time for $CF^{+}$ is 0.869~s, while the average computational time for GVNS~I is 0.018~s.  It can be noted that the value of the theoretical lower bound ($LB_T$) is very tight (since the average gap between the optimal makespan and $LB_T$ is equal to 0.427\%).
The best overall performance is demonstrated by GVNS~I with a total average computational time of 0.007~s for $n=8$.

\item In Table~\ref{tab-3}~:~GVNS~I, GVNS~II, and GRASP can reach an optimal solution for any instance in significantly less computational time than the $CF^{+}$ and $TIF^{+}$ formulations. For example, for $(n=10,\alpha=\alpha_1)$, the average computational time for $TIF^{+}$ is 6.812~s, while the average computational time for GVNS~II is equal to 0.003~s. The average gap between $C_{max}^{*}$ and $LB_T$ is equal to 0.176\%. The best overall performance is shown by GVNS~I with a total average computational time  of 0.043~s for $n=10$.

\item In Table~\ref{tab-3}~:~$TIF^{+}$, GVNS~I, GVNS~II, and GRASP are compared (since $CF^{+}$ is not able to produce an optimal solution for all instances). It can be noticed that GVNS~I, GVNS~II, and GRASP can reach an optimal solution for any instance in significantly less computational time than the $TIF^{+}$ formulation. For example, for $(n=12,\alpha=\alpha_3$), the average computational time for $TIF^{+}$ is equal to 149.914~s, while the average computational time for GVNS~I is 2.785~s. Again, the value of $LB_T$ is very tight since the average gap between $C_{max}^{*}$ and $LB_T$ is equal to 0.115\%. The best overall performance is demonstrated by GVNS~I with a total average computational time of 0.930~s for $n=10$.
\end{itemize}

\begin{table}[!h]
\caption{Comparison of $CF^{+}$, $TIF^{+}$, GVNS~I, GVNS~II, and GRASP in terms of CPU time for $n=10$.}
\centering
\renewcommand{\tabcolsep}{1.5pt}
{\scriptsize
\renewcommand{\arraystretch}{0.75}
\begin{tabular}{|ccc|c|c|c|c|c|c|c|}
\hline
\multicolumn{5}{|c|}{Instance} & $CF^{+}$ & $TIF^{+}$ & GVNS~I & GVNS~II & GRASP\\
\hline
\multicolumn{1}{|c|}{ID} & \multicolumn{1}{c|}{$n$} & $\alpha$ & $LB_{T}$ & $C_{max}^{*}$ & \multicolumn{5}{c|}{CPU} \\
\hline
    \multicolumn{1}{|c|}{I31} & \multicolumn{1}{c|}{} &   & 277 & 277 & 0.676 & 4.221 & 0.006 & 0.008 & 0.007 \\
    \multicolumn{1}{|c|}{I32} & \multicolumn{1}{c|}{} &   & 241.5 & 242 & 0.716 & 5.046 & 0.001 & 0.001 & 0.001 \\
    \multicolumn{1}{|c|}{I33} & \multicolumn{1}{c|}{} &   & 310 & 310 & 9.827 & 7.938 & 0.002 & 0.004 & 0.007 \\
    \multicolumn{1}{|c|}{I34} & \multicolumn{1}{c|}{} &   & 298.5 & 299 & 1.865 & 4.864 & 0.001 & 0.000 & 0.001 \\
    \multicolumn{1}{|c|}{I35} & \multicolumn{1}{c|}{10} & $\alpha_1$ & 312.5 & 313 & 1.154 & 6.523 & 0.002 & 0.001 & 0.001 \\
    \multicolumn{1}{|c|}{I36} & \multicolumn{1}{c|}{} &   & 380 & 380 & 17.25 & 8.272 & 0.006 & 0.005 & 0.001 \\
    \multicolumn{1}{|c|}{I37} & \multicolumn{1}{c|}{} &   & 310.5 & 311 & 1.069 & 8.639 & 0.001 & 0.001 & 0.002 \\
    \multicolumn{1}{|c|}{I38} & \multicolumn{1}{c|}{} &   & 293 & 293 & 1.668 & 10.129 & 0.005 & 0.004 & 0.013 \\
    \multicolumn{1}{|c|}{I39} & \multicolumn{1}{c|}{} &   & 321 & 321 & 0.677 & 9.687 & 0.039 & 0.007 & 0.012 \\
    \multicolumn{1}{|c|}{I40} & \multicolumn{1}{c|}{} &   & 193 & 193 & 30.944 & 2.803 & 0.001 & 0.001 & 0.001 \\
    \hline
    \multicolumn{3}{|c|}{\textbf{Avg.}} & \textbf{293.70} & \textbf{293.90} & 6.585 & 6.812 & 0.006 & \textbf{0.003} & 0.004 \\
    \hline
    \multicolumn{1}{|c|}{I41} & \multicolumn{1}{c|}{} &   & 414.5 & 416 & 44.247 & 38.625 & 0.004 & 0.008 & 0.007 \\
    \multicolumn{1}{|c|}{I42} & \multicolumn{1}{c|}{} &   & 445.5 & 448 & 14.011 & 59.407 & 0.004 & 0.001 & 0.002 \\
    \multicolumn{1}{|c|}{I43} & \multicolumn{1}{c|}{} &   & 310 & 311 & 9.018 & 8.268 & 0.005 & 0.002 & 0.002 \\
    \multicolumn{1}{|c|}{I44} & \multicolumn{1}{c|}{} &   & 227 & 227 & 0.529 & 6.365 & 0.007 & 0.002 & 0.010 \\
    \multicolumn{1}{|c|}{I45} & \multicolumn{1}{c|}{10} & $\alpha_2$ & 347 & 347 & 6.84 & 14.487 & 0.016 & 0.044 & 0.004 \\
    \multicolumn{1}{|c|}{I46} & \multicolumn{1}{c|}{} &   & 222 & 222 & 1.981 & 3.899 & 0.098 & 0.002 & 0.001 \\
    \multicolumn{1}{|c|}{I47} & \multicolumn{1}{c|}{} &   & 316 & 317 & 298.729 & 7.428 & 0.011 & 0.036 & 0.010 \\
    \multicolumn{1}{|c|}{I48} & \multicolumn{1}{c|}{} &   & 280.5 & 281 & 1.432 & 9.469 & 0.010 & 0.007 & 0.007 \\
    \multicolumn{1}{|c|}{I49} & \multicolumn{1}{c|}{} &   & 273 & 273 & 12.541 & 4.958 & 0.004 & 0.004 & 0.013 \\
    \multicolumn{1}{|c|}{I50} & \multicolumn{1}{c|}{} &   & 356.5 & 357 & 11.419 & 15.357 & 0.006 & 0.001 & 0.005 \\
    \hline
    \multicolumn{3}{|c|}{\textbf{Avg.}} & \textbf{319.20} & \textbf{319.90} & 40.075 & 16.826 & 0.016 & 0.011 & \textbf{0.006} \\
    \hline
    \multicolumn{1}{|c|}{I51} & \multicolumn{1}{c|}{} &   & 477.5 & 479 & 5.491 & 19.16 & 0.333 & 5.346 & 41.661 \\
    \multicolumn{1}{|c|}{I52} & \multicolumn{1}{c|}{} &   & 316 & 316 & 21.075 & 16.941 & 0.037 & 0.515 & 2.255 \\
    \multicolumn{1}{|c|}{I53} & \multicolumn{1}{c|}{} &   & 518 & 519 & 8.023 & 30.661 & 0.079 & 0.063 & 0.354 \\
    \multicolumn{1}{|c|}{I54} & \multicolumn{1}{c|}{} &   & 456.5 & 458 & 17.57 & 43.689 & 0.022 & 0.039 & 0.051 \\
    \multicolumn{1}{|c|}{I55} & \multicolumn{1}{c|}{10} & $\alpha_3$ & 411 & 413 & 5.638 & 33.458 & 0.301 & 1.459 & 0.845 \\
    \multicolumn{1}{|c|}{I56} & \multicolumn{1}{c|}{} &   & 347.5 & 349 & 7.123 & 21.732 & 0.040 & 0.014 & 0.034 \\
    \multicolumn{1}{|c|}{I57} & \multicolumn{1}{c|}{} &   & 356.5 & 358 & 7.302 & 25.912 & 0.114 & 0.496 & 0.480 \\
    \multicolumn{1}{|c|}{I58} & \multicolumn{1}{c|}{} &   & 485 & 487 & 7.683 & 67.399 & 0.023 & 0.034 & 0.106 \\
    \multicolumn{1}{|c|}{I59} & \multicolumn{1}{c|}{} &   & 523 & 523 & 13.466 & 92.011 & 0.022 & 0.120 & 0.007 \\
    \multicolumn{1}{|c|}{I60} & \multicolumn{1}{c|}{} &   & 443.5 & 444 & 13.581 & 55.662 & 0.076 & 0.552 & 0.127 \\
    \hline
    \multicolumn{3}{|c|}{\textbf{Avg.}} & \textbf{433.45} & \textbf{434.60} & 10.695 & 40.663 & \textbf{0.105} & 0.864 & 4.592 \\
\hline
\end{tabular}
}
\label{tab-3}
\end{table}

\begin{table}[!h]
\caption{Comparison of $TIF^{+}$, GVNS~I, GVNS~II, and GRASP in terms of CPU time for $n=12$.}
\centering
\renewcommand{\tabcolsep}{1.5pt}
{\scriptsize
\renewcommand{\arraystretch}{0.75}
\begin{tabular}{|ccc|c|c|c|c|c|c|}
\hline
\multicolumn{5}{|c|}{Instance} & $TIF^{+}$ & GVNS I & GVNS II & GRASP \\
 \hline
\multicolumn{1}{|c|}{ID} & \multicolumn{1}{c|}{$n$} & $\alpha$ & $LB_{T}$ & $C_{max}^{*}$ & \multicolumn{4}{c|}{CPU} \\
    \hline
    \multicolumn{1}{|c|}{I61} & \multicolumn{1}{c|}{} &   & 403 & 403 & 21.772 & 0.000 & 0.000 & 0.005 \\
    \multicolumn{1}{|c|}{I62} & \multicolumn{1}{c|}{} &   & 338 & 338 & 13.471 & 0.005 & 0.006 & 0.003 \\
    \multicolumn{1}{|c|}{I63} & \multicolumn{1}{c|}{} &   & 338.5 & 339 & 6.650 & 0.000 & 0.002 & 0.002 \\
    \multicolumn{1}{|c|}{I64} & \multicolumn{1}{c|}{} &   & 330 & 330 & 9.110 & 0.002 & 0.005 & 0.002 \\
    \multicolumn{1}{|c|}{I65} & \multicolumn{1}{c|}{12} & $\alpha_1$ & 393 & 393 & 17.890 & 0.000 & 0.000 & 0.005 \\
    \multicolumn{1}{|c|}{I66} & \multicolumn{1}{c|}{} &   & 219.5 & 220 & 4.217 & 0.000 & 0.000 & 0.000 \\
    \multicolumn{1}{|c|}{I67} & \multicolumn{1}{c|}{} &   & 352 & 352 & 7.460 & 0.000 & 0.002 & 0.002 \\
    \multicolumn{1}{|c|}{I68} & \multicolumn{1}{c|}{} &   & 383 & 383 & 12.570 & 0.002 & 0.013 & 0.005 \\
    \multicolumn{1}{|c|}{I69} & \multicolumn{1}{c|}{} &   & 368.5 & 369 & 10.415 & 0.000 & 0.000 & 0.002 \\
    \multicolumn{1}{|c|}{I70} & \multicolumn{1}{c|}{} &   & 372 & 372 & 20.398 & 0.002 & 0.008 & 0.005 \\
     \hline
    \multicolumn{3}{|c|}{\textbf{Avg.}} & \textbf{349.75} & \textbf{349.9} & 12.395 & \textbf{0.001} & 0.003 & 0.003 \\
     \hline
    \multicolumn{1}{|c|}{I71} & \multicolumn{1}{c|}{} &   & 474 & 475 & 44.769 & 0.000 & 0.008 & 0.002 \\
    \multicolumn{1}{|c|}{I72} & \multicolumn{1}{c|}{} &   & 444.5 & 446 & 36.420 & 0.003 & 0.000 & 0.006 \\
    \multicolumn{1}{|c|}{I73} & \multicolumn{1}{c|}{} &   & 459 & 459 & 47.066 & 0.000 & 0.042 & 0.014 \\
    \multicolumn{1}{|c|}{I74} & \multicolumn{1}{c|}{} &   & 381 & 382 & 18.444 & 0.003 & 0.009 & 0.000 \\
    \multicolumn{1}{|c|}{I75} & \multicolumn{1}{c|}{12} & $\alpha_2$ & 352.5 & 353 & 21.959 & 0.003 & 0.000 & 0.005 \\
    \multicolumn{1}{|c|}{I76} & \multicolumn{1}{c|}{} &   & 500.5 & 501 & 64.852 & 0.016 & 0.009 & 0.053 \\
    \multicolumn{1}{|c|}{I77} & \multicolumn{1}{c|}{} &   & 363.5 & 364 & 14.919 & 0.016 & 0.005 & 0.006 \\
    \multicolumn{1}{|c|}{I78} & \multicolumn{1}{c|}{} &   & 546.5 & 547 & 69.651 & 0.002 & 0.000 & 0.005 \\
    \multicolumn{1}{|c|}{I79} & \multicolumn{1}{c|}{} &   & 589.5 & 590 & 105.075 & 0.006 & 0.000 & 0.002 \\
    \multicolumn{1}{|c|}{I80} & \multicolumn{1}{c|}{} &   & 401 & 402 & 24.291 & 0.000 & 0.000 & 0.002 \\
     \hline
    \multicolumn{3}{|c|}{\textbf{Avg.}} & \textbf{451.2} & \textbf{451.9} & 44.745 & \textbf{0.005} & 0.007 & 0.009 \\
     \hline
    \multicolumn{1}{|c|}{I81} & \multicolumn{1}{c|}{} &   & 518 & 519 & 150.531 & 0.480 & 0.196 & 0.273 \\
    \multicolumn{1}{|c|}{I82} & \multicolumn{1}{c|}{} &   & 510.5 & 512 & 196.291 & 2.799 & 3.911 & 1.894 \\
    \multicolumn{1}{|c|}{I83} & \multicolumn{1}{c|}{} &   & 589.5 & 590 & 171.298 & 2.878 & 4.770 & 3.999 \\
    \multicolumn{1}{|c|}{I84} & \multicolumn{1}{c|}{} &   & 684.5 & 686 & 213.726 & 1.797 & 3.697 & 4.922 \\
    \multicolumn{1}{|c|}{I85} & \multicolumn{1}{c|}{12} & $\alpha_3$ & 485.5 & 486 & 99.752 & 1.039 & 0.013 & 1.232 \\
    \multicolumn{1}{|c|}{I86} & \multicolumn{1}{c|}{} &   & 570 & 570 & 116.832 & 3.005 & 1.557 & 2.717 \\
    \multicolumn{1}{|c|}{I87} & \multicolumn{1}{c|}{} &   & 632 & 632 & 207.810 & 0.187 & 0.020 & 0.201 \\
    \multicolumn{1}{|c|}{I88} & \multicolumn{1}{c|}{} &   & 532.5 & 533 & 148.399 & 0.208 & 0.286 & 0.614 \\
    \multicolumn{1}{|c|}{I89} & \multicolumn{1}{c|}{} &   & 431 & 432 & 46.514 & 0.925 & 1.004 & 1.407 \\
    \multicolumn{1}{|c|}{I90} & \multicolumn{1}{c|}{} &   & 492.5 & 493 & 147.986 & 14.533 & 52.101 & 30.002 \\
     \hline
    \multicolumn{3}{|c|}{\textbf{Avg.}} & \textbf{544.60} & \textbf{545.30} & 149.914 & \textbf{2.785} & 6.756 & 4.726 \\
    \hline
\end{tabular}
}
\label{tab-4}
\end{table}

\begin{table}[!h]
\caption{Comparison of $CF^{+}$, $TIF^{+}$, GVNS~I, GVNS~II and GRASP for $n=25$.}
\centering
\renewcommand{\tabcolsep}{0.75pt}
{\scriptsize
\renewcommand{\arraystretch}{0.70}
\begin{tabular}{|c|c|c|c|cccc|cccc|ccc|ccc|ccc|}
\hline
 \multicolumn{4}{|c|}{Instance} & \multicolumn{4}{c|}{$CF^{+}$} & \multicolumn{4}{c|}{$TIF^{+}$} & \multicolumn{3}{c|}{GVNS I} & \multicolumn{3}{c|}{GVNS II} & \multicolumn{3}{c|}{GRASP} \\
 \hline
    ID & $n$ & $\alpha$ & $LB_{T}$ & $UB_{CF^{+}}$ & $LB_{CF^{+}}$ & $Gap_{CF^{+}}(\%)$ & CPU & $UB_{TIF^{+}}$ & $LB_{TIF^{+}}$ & $Gap_{TIF^{+}}(\%)$ & CPU & $C_{max}^{best}$& $C_{max}^{avg}$ & CPU & $C_{max}^{best}$ & $C_{max}^{avg}$ & CPU & $C_{max}^{best}$ & $C_{max}^{avg}$ & CPU \\
\hline
    I91 &   &   & 800.5 & 801 & 799.5 & 0.19 & 3600 & 801 & 549 & 31.47 & 3600 & \textbf{801} & 801 & 0.03 & \textbf{801} & 801 & 0.02 & \textbf{801} & 801 & 0.02 \\
    I92 &   &   & 763 & 763 & 762 & 0.13 & 3600 & 763 & 763 & 0 & 1782.26 & \textbf{763} & 763 & 0.16 & \textbf{763} & 763 & 0.23 & \textbf{763} & 763 & 0.13 \\
    I93 &   &   & 860.5 & 861 & 859.5 & 0.17 & 3600 & 861 & 861 & 0 & 1454.95 & \textbf{861} & 861 & 0.02 & \textbf{861} & 861 & 0.01 & \textbf{861} & 861 & 0.01 \\
    I94 &   &   & 881 & 882 & 880 & 0.23 & 3600 & 881 & 677 & 23.16 & 3600 & \textbf{881} & 881 & 0.08 & \textbf{881} & 881 & 0.10 & \textbf{881} & 881 & 0.09 \\
    I95 & 25 & $\alpha_1$ & 816 & 816 & 815 & 0.12 & 3600 & 816 & 816 & 0 & 970.43 & \textbf{816} & 816 & 0.05 & \textbf{816} & 816 & 0.07 & \textbf{816} & 816 & 0.05 \\
    I96 &   &   & 705 & 705 & 704 & 0.14 & 3600 & 705 & 545 & 22.70 & 3600 & \textbf{705} & 705 & 0.01 & \textbf{705} & 705 & 0.01 & \textbf{705} & 705 & 0.02 \\
    I97 &   &   & 711 & 711 & 710 & 0.14 & 3600 & 711 & 711 & 0 & 1189.41 & \textbf{711} & 711 & 0.05 & \textbf{711} & 711 & 0.03 & \textbf{711} & 711 & 0.04 \\
    I98 &   &   & 899.5 & 900 & 898.5 & 0.17 & 3600 & 901 & 706.7 & 21.56 & 3600 & \textbf{900} & 900 & 0.01 & \textbf{900} & 900 & 0.01 & \textbf{900} & 900 & 0.01 \\
    I99 &   &   & 790 & 790 & 789 & 0.13 & 3600 & 790 & 790 & 0 & 3025.83 & \textbf{790} & 790 & 0.02 & \textbf{790} & 790 & 0.02 & \textbf{790} & 790 & 0.01 \\
    I100 &   &   & 742 & 742 & 741 & 0.13 & 3600 & 742 & 742 & 0 & 1884.51 & \textbf{742} & 742 & 0.02 & \textbf{742} & 742 & 0.02 & \textbf{742} & 742 & 0.02\\
    \hline
    I101 &   &   & 1079.5 & 1080 & 1077 & 0.28 & 3600 & 1085 & 719.6 & 33.68 & 3600 & \textbf{1080} & 1080 & 0.26 & \textbf{1080} & 1080 & 0.20 & \textbf{1080} & 1080 & 0.33 \\
    I102 &   &   & 997 & 998 & 996 & 0.20 & 3600 & 1008 & 664.9 & 34.04 & 3600 & \textbf{997} & 997 & 1.65 & \textbf{997} & 997 & 1.76 & \textbf{997} & 997 & 2.70 \\
    I103 &   &   & 767.5 & 768 & 766.5 & 0.20 & 3600 & 768 & 768 & 0 & 2456.37 & \textbf{768} & 768 & 0.20 & \textbf{768} & 768 & 0.18 & \textbf{768} & 768 & 0.14 \\
    I104 &   &   & 831 & 832 & 829.5 & 0.30 & 3600 & 831 & 586.3 & 29.45 & 3600 & \textbf{831} & 831 & 0.76 & \textbf{831} & 831 & 0.48 & \textbf{831} & 831 & 0.81 \\
    I105 & 25 & $\alpha_2$ & 854 & 855 & 853 & 0.23 & 3600 & 859 & 573.1 & 33.28 & 3600 & \textbf{854} & 854.2 & 3.23 & \textbf{854} & 854.3 & 2.53 & \textbf{854} & 854.3 & 1.99 \\
    I106 &   &   & 974.5 & 975 & 972.5 & 0.26 & 3600 & 975 & 650.8 & 33.25 & 3600 & \textbf{975} & 975 & 0.52 & \textbf{975} & 975 & 0.39 & \textbf{975} & 975 & 0.58 \\
    I107 &   &   & 888 & 890 & 887 & 0.34 & 3600 & 891 & 595 & 33.22 & 3600 & \textbf{888} & 888 & 1.96 & \textbf{888} & 888 & 2.28 & \textbf{888} & 888 & 3.28 \\
    I108 &   &   & 737.5 & 739 & 736.5 & 0.34 & 3600 & 738 & 558 & 24.39 & 3600 & \textbf{738} & 738 & 0.20 & \textbf{738} & 738 & 0.44 & \textbf{738} & 738 & 0.40 \\
    I109 &   &   & 978 & 978 & 976.5 & 0.15 & 3600 & 988 & 632.6 & 35.97 & 3600 & \textbf{978} & 978 & 0.29 & \textbf{978} & 978 & 0.24 & \textbf{978} & 978 & 0.44 \\
    I110 &   &   & 756 & 757 & 754.5 & 0.33 & 3600 & 757 & 576.9 & 23.79 & 3600 & \textbf{756} & 756 & 1.78 & \textbf{756} & 756 & 1.91 & \textbf{756} & 756 & 1.84 \\
    \hline
    I111 &   &   & 1023.5 & 1028 & 1021 & 0.68 & 3600 & 1052 & 648.7 & 38.34 & 3600 & 1027 & 1031.6 & 5.24 & 1027 & 1030.3 & 6.05 & \textbf{1026} & 1032.7 & 3.85 \\
    I112 &   &   & 1215 & 1227 & 1213 & 1.14 & 3600 & 1250 & 666.1 & 46.71 & 3600 & 1217 & 1220.2 & 4.31 & \textbf{1215} & 1221.3 & 4.77 & 1220 & 1224.4 & 5.19 \\
    I113 &   &   & 1108 & 1123 & 1105.5 & 1.56 & 3600 & 1156 & 718.4 & 37.85 & 3600 & 1124 & 1128.1 & 5.58 & \textbf{1116} & 1128.8 & 4.36 & 1127 & 1131.7 & 5.12 \\
    I114 &   &   & 1182.5 & 1187 & 1179.5 & 0.63 & 3600 & 1255 & 681.8 & 45.68 & 3600 & \textbf{1184} & 1189.6 & 5.79 & 1187 & 1192.4 & 5.50 & 1190 & 1192.6 & 4.68 \\
    I115 & 25 & $\alpha_3$ & 1216.5 & 1229 & 1214.5 & 1.18 & 3600 & 1241 & 794.5 & 35.98 & 3600 & 1226 & 1236.1 & 3.31 & \textbf{1223} & 1232.8 & 7.29 & 1230 & 1238.7 & 4.83 \\
    I116 &   &   & 1113.5 & 1123 & 1110.5 & 1.11 & 3600 & 1141 & 745.4 & 34.67 & 3600 & 1116 & 1121.9 & 5.34 & 1116 & 1121.8 & 6.24 & \textbf{1115} & 1121.4 & 5.52 \\
    I117 &   &   & 1045 & 1054 & 1043 & 1.04 & 3600 & 1074 & 659.3 & 38.62 & 3600 & 1057 & 1059.3 & 4.77 & 1055 & 1058 & 2.92 & \textbf{1050} & 1055.7 & 5.55 \\
    I118 &   &   & 943 & 944 & 941 & 0.32 & 3600 & 953 & 634.8 & 33.39 & 3600 & 947 & 951.2 & 2.96 & \textbf{946} & 950.8 & 5.04 & \textbf{946} & 949.9 & 6.76 \\
    I119 &   &   & 1230.5 & 1238 & 1227 & 0.89 & 3600 & 1237 & 780.9 & 36.87 & 3600 & \textbf{1233} & 1239.5 & 3.20 & \textbf{1233} & 1239.6 & 4.41 & 1236 & 1241 & 5.26 \\
    I120 &   &   & 1135 & 1138 & 1132 & 0.53 & 3600 & 1159 & 758.9 & 34.52 & 3600 & 1141 & 1145.9 & 3.31 & 1138 & 1146.3 & 5.10 & \textbf{1137} & 1144.8 & 4.97 \\
\hline
\end{tabular}%
}
\label{tab-5}
\end{table}

In Table~\ref{tab-5}, we compare the performance of $CF^{+}$, $TIF^{+}$, GVNS~I, GVNS~II, and GRASP for $n=25$. Each instance is characterized by the following information. The ID; the number $n$ of jobs; the loading/unloading times variance; and the value of the theoretical lower bound $LB_{T}$. For the formulation $CF^{+}$ (resp. $TIF^{+}$), the following information is given. The upper bound $UB_{{CF}^{+}}$ (respectively $UB_{{TIF}^{+}}$), the lower bound $LB_{{CF}^{+}}$ (respectively $LB_{{TIF}^{+}}$), the percentage gap to optimality $Gap_{{CF}^{+}}(\%)$ (resp. $Gap_{{TIF}^{+}}(\%)$), and the CPU time required to prove optimality (below 3600~s). The following results are presented for GVNS~I, GVNS~II and GRASP: the best (resp. the average) makespan value over 10 runs denoted as $C_{max}^{best}$  (resp. $C_{max}^{avg}$). Finally, the average computational times are also provided that are computed over the 10 runs, and the computational time of a run corresponds to the time at which the best solution
is found (the best results are indicated in bold face). 

The following observations can be made. Based on $CF^{+}$, CPLEX is not able to produce an optimal solution for all instances.  Furthermore, based on the $TIF^{+}$ formulation, CPLEX is able to find an optimal solution for 6 instances  for ($n=25,\alpha=\alpha_1$) (I2, I3, I5, I7, I9 and I10). For these instances, GVNS~I, GVNS~II, and GRASP are able to find the same optimal solution in a significantly less computational time in comparison with $TIF^{+}$. For ($n=25,\alpha=\alpha_2$),  based on $TIF^{+}$ formulation, CPLEX is able to generate an optimal solution for only 1 instance (I103). GVNS~I, GVNS~II, and GRASP are able to find the optimal solution for instance I103 in a significantly less computing time in comparison with $TIF^{+}$.  For $\alpha_3$, $CF^{+}$ produced better upper bounds than $TIF^{+}$. On average, GVNS~I, GVNS~II, and GRASP are able to produce approximate solutions of better quality in comparison with the  upper bounds generated by the formulations $CF^{+}$ and $TIF^{+}$. To sum up, for $n=25$ GVNS~I, GVNS~II, and GRASP have a similar performance. In addition, the difference between $C_{max}^{best}$ and $C_{max}^{avg}$ is very small (often below one unit).

\subsubsection{Results for medium-sized instances}
In Table~\ref{tab-6} in the Appendix, we compare the performance of $CF^{+}$, $TIF^{+}$, GVNS~I, GVNS~II, and GRASP for $n=50$, and in Table~\ref{tab-7}, we compare the performance of GVNS~I, GVNS~II, and GRASP only with $CF^{+}$ (since the $TIF^{+}$ formulation is not able to find a feasible solution for the majority of instances). Tables~\ref{tab-6},~\ref{tab-7} have the same structure as Table~\ref{tab-5}, and the best results are indicated in bold face.

The following observations can be made. 

\begin{itemize}
\item For $n=50$ : Overall, among the 30 instances, GVNS~I found 25 best solutions (83.33\%), whereas GVNS~II and GRASP found 23 (76.66\%) and 20 (66.66\%) ones, respectively.  It can be noted that the difference between $C_{max}^{best}$ and $C_{max}^{avg}$ is on average very small for GVNS~I, GVNS~II and GRASP.  For the instances with $\alpha_3$, $CF^{+}$ produced better upper bounds than $TIF^{+}$. On average, GVNS~I, GVNS~II and GRASP are able to produce approximate solutions of better quality in comparison with the  upper bounds generated by the formulations $CF^{+}$ and $TIF^{+}$. In addition, for all instances of $\alpha_1$ and $\alpha_2$, the gap between $C_{max}^{best}$ and $LB_{T}$ is very small for GVNS~I, GVNS~II and GRASP. The best performance in terms of solution quality and computational time is demonstrated by GVNS~I, with 24 best solutions and an overall average computational time of 23.86~s.

\item For $n=100$ : Overall, among the 30 instances, GVNS~I found 24 best solutions (80\%),
whereas GVNS~II and GRASP found only 16 (53.33\%) and 11 (36.66\%) ones, respectively. Based on $CF^{+}$, CPLEX is able to produce a feasible solution at best for 24 instances (4 instances without a feasible solution). For the instances with $\alpha_1$,  the gap between $C_{max}^{best}$ and $LB_{T}$ is very small for GVNS~I, GVNS~II and GRASP. On average, GVNS~I, GVNS~II and GRASP are able to produce approximate solutions of better quality in comparison with the  upper bounds generated by the formulation $CF^{+}$. For the instances with $\alpha_3$ and for GVNS~I, GVNS~II and GRASP, the difference between $C_{max}^{best} $ and $C_{max}^{avg}$ grows significantly. The best overall performance is demonstrated by GVNS~I, with 24 best solutions and an overall average computational time of 27.31~s. 
\end{itemize}

\subsubsection{Results for large-sized instances}

Tables~\ref{tab-8},~\ref{tab-9} describe the performance of GVNS~I, GVNS~II, and GRASP for large-sized instances. They have the same structure as Table~\ref{tab-7}, and the best results are indicated in bold face. Note that no mathematical formulation is able to obtain a feasible solution within 3600~s.

The following observations can be made:

\begin{itemize}
\item For $n=250$ : Overall, among the 30 instances, GVNS~I found 27 best solutions (90\%), whereas GVNS~II and GRASP found only 3 (10\%) and 1 (3.33\%), respectively. 
For all instances and for GVNS~I, GVNS~II and GRASP, the difference between $C_{max}^{best} $ and $C_{max}^{avg}$ grows significantly. The average computational time for GVNS~I (resp. GVNS~II and GRASP) is equal to 113~s (resp. 185.64 and 154.10). The best overall performance is demonstrated by GVNS~I with 27 best solutions and an overall average computational time of 113~s. 

\item For $n=500$ : Overall, among the 30 instances, GVNS~I found 29 best solutions (96.66\%), whereas GVNS~II and GRASP found only 1 (3.33\%) and 0 (0\%), respectively. For GVNS~I, GVNS~II and GRASP, the difference between $C_{max}^{best} $ and $C_{max}^{avg}$ grows significantly especially for the instances with $\alpha_3$. The average computational time for GVNS~I (resp. GVNS~II and GRASP) is equal to 99.72~s (resp. 225.52 and 218.88). The best overall performance is demonstrated by GVNS~I with 29 best solutions and an average computational time of 99.72~s.

\end{itemize}
To sum up, for small-sized instances, GVNS~I, GVNS~II and GRASP are able to produce an optimal solution for all instances (among the 120 instances) in significantly less computing time in comparison with $CF^{+}$ and $TIF^{+}$ formulations. For medium and large-sized instances, among the 120 instances, GVNS~I found 105 best solutions (87.50\%), whereas GVNS~II and GRASP found only 43 (35.83\%) and 32 (26.66\%) ones, respectively. This success can be explained by the quality of the initial solution since the iterative improvement procedure contributes significantly to the minimization of the makespan. 

Note that the difference between $C_{max}^{best} $ and $C_{max}^{avg}$ for all methods grows with $n$ and $\alpha$. These results can lead to the indication that the instances with $\alpha_3$ are more difficult to solve in comparison with $\alpha_1$ and $\alpha_2$, which is expected since the loading/unloading times variance is large for $\alpha_3$.

\subsubsection{Discussion}

Table~\ref{tab--} presents the performance of GVNS~I, GVNS~II, and GRASP  in terms of the
percentage deviation from the theoretical lower bound  ($Gap_{LB_T}(\%)$) according to the number of jobs and the  loading/unloading variance coefficient. In order to compute each percentage deviation, two values are compared: the value of $LB_{T}$, and the value of the best approximate makespan obtained over 10 runs by the considered method (see Equation~\ref{GAPLBT}). 

\begin{equation}
\label{GAPLBT}
Gap_{LB_T}(\%) = 100 \times \frac{C_{max}^{best} - LB_T}{LB_T}
\end{equation}

\begin{table}[!h]
\centering
\scriptsize
\caption{Comparison of GVNS~I, GVNS~II, and GRASP in terms of percentage deviation from the theoretical lower bound for $n \in \{8, 10,12,25,50,100,250,500\}$.}
\begin{tabular}{|c|c|c|c|c|}
\hline
\multirow{2}[4]{*}{$n$} & \multirow{2}[4]{*}{$\alpha$} & \multicolumn{3}{c|}{$Gap_{LB_{T}}(\%)$} \\
\cline{3-5}      &   & GVNSI & GVNSI & GRASP \\
    \hline
    8 & $\alpha_1$ & 0.29 & 0.29 & 0.29 \\
      & $\alpha_2$ & 0.34 & 0.34 & 0.34 \\
      & $\alpha_3$ & 0.66 & 0.66 & 0.66 \\
    \hline
    10 & $\alpha_1$ & 0.07 & 0.07 & 0.07 \\
      & $\alpha_2$ & 0.19 & 0.19 & 0.19 \\
      & $\alpha_3$ & 0.27 & 0.27 & 0.27 \\
    \hline
    12 & $\alpha_1$ & 0.05 & 0.05 & 0.05 \\
      & $\alpha_2$ & 0.16 & 0.16 & 0.16 \\
      & $\alpha_3$ & 0.13 & 0.13 & 0.13 \\
    \hline
    25 & $\alpha_1$ & 0.02 & 0.02 & 0.02 \\
      & $\alpha_2$ & 0.02 & 0.02 & 0.02 \\
      & $\alpha_3$ & 0.54 & 0.39 & 0.57 \\
    \hline
    50 & $\alpha_1$ & 0.02 & 0.02 & 0.02 \\
      & $\alpha_2$ & 0.02 & 0.01 & 0.02 \\
      & $\alpha_3$ & 1.50 & 1.57 & 1.80 \\
    \hline
    100 & $\alpha_1$ & 0.01 & 0.01 & 0.01 \\
      & $\alpha_2$ & 0.14 & 0.15 & 0.17 \\
      & $\alpha_3$ & 2.91 & 3.80 & 3.68 \\
    \hline
    250 & $\alpha_1$ & 0.03 & 0.06 & 0.09 \\
      & $\alpha_2$ & 0.35 & 0.82 & 0.85 \\
      & $\alpha_3$ & 3.70 & 5.73 & 6.22 \\
    \hline
    500 & $\alpha_1$ & 0.02 & 0.23 & 0.21 \\
      & $\alpha_2$ & 0.31 & 1.37 & 1.34 \\
      & $\alpha_3$ & 3.66 & 7.07 & 7.31 \\
    \hline
\end{tabular}
\label{tab--}
\end{table}%

The average percentage deviation from the theoretical lower bound of GVNS~I  (respectively GVNS~II and GRASP) for the instances with $\alpha_1$ and $n \in \{8, 10, 12, 25, 50, 100, 250, 500\}$ is equal to 0.06 (respectively 0.09 and 0.10). The average percentage deviation from the theoretical lower bound of GVNS~I  (resp. GVNS~II and GRASP) for the instances with $\alpha_2$ and $n \in \{8, 10, 12, 25, 50, 100, 250, 500\}$ is equal to 0.19 (respectively 0.38\% and 0.39\%). Finally, the average percentage deviation from the theoretical lower bound of GVNS~I  (respectively GVNS~II and GRASP) for $\alpha_3$ and $n \in \{8, 10, 12, 25, 50, 100, 250, 500\}$ is equal to 1.67\% (resp. 2.45\% and 2.58\%). 
One can see that the average percentage deviation from the theoretical lower bound increases with the increase of the loading/unloading times variance. Indeed, as shown in the preceding section, the instances with a large loading/unloading times variance are more difficult to solve than the other instances (we recall that the difference between $C_{max}^{best}$ and $C_{max}^{avg}$ is very small for $\alpha_1$ and $\alpha_2$). The total average percentage deviation from the theoretical lower bound of GVNS~I  (respectively GVNS~II and GRASP) for all instances is equal to 0.642\% (respectively 0.98\% and 1.02\%). To sum up, we can observe the superiority of GVNS~I over GVNS~II and GRASP.

Moreover, Table~\ref{tab--0} summarizes the performance of GVNS~I, GVNS~II, and GRASP in terms of the percentage deviation from the best-known solution
(the best one over all the runs of all the metaheuristics, and the one obtained by the considered metaheuristic) according to $n$ and $\alpha$. For each metaheuristic,
the following features are given: the minimum value of the percentage deviation over all instance', Min; the average value of the percentage deviation over
all instance', Avg; and the maximum value of the percentage deviation over all instance', Max. The last line of the table shows total average results. The results show again that GVNS~I based on the iterative improvement procedure as  initial-solution finding mechanism, on average, yielded a superior performance in terms of minimum, average and maximum gaps when compared to  GVNS~II, and GRASP.

\begin{table}[!h]
\centering
\scriptsize
\caption{Percentage deviation from the best known solution for GVNS~I, GVNS~II, and GRASP.}
\begin{tabular}{|cc|ccc|ccc|ccc|}
\hline
\multicolumn{1}{|c|}{\multirow{2}[6]{*}{n}} & \multirow{2}[6]{*}{$\alpha$} & \multicolumn{9}{c|}{$Gap_{Dev}(\%)$} \\
\cline{3-11}    \multicolumn{1}{|c|}{} &   & \multicolumn{3}{c|}{GVNS~I} & \multicolumn{3}{c|}{GVNS~II} & \multicolumn{3}{c|}{GRASP} \\
\cline{3-11}    \multicolumn{1}{|c|}{} &   & Min. & Avg.  & Max. & Min. & Avg.  & Max. & Min. & Avg.  & Max. \\
    \hline
    \multicolumn{1}{|c|}{8} & $\alpha_1$ & 0.00 & 0.00 & 0.00 & 0.00 & 0.00 & 0.00 & 0.00 & 0.00 & 0.00 \\
    \multicolumn{1}{|c|}{} & $\alpha_2$ & 0.00 & 0.00 & 0.00 & 0.00 & 0.00 & 0.00 & 0.00 & 0.00 & 0.00 \\
    \multicolumn{1}{|c|}{} & $\alpha_3$ & 0.00 & 0.00 & 0.00 & 0.00 & 0.00 & 0.00 & 0.00 & 0.00 & 0.00 \\
    \hline
    \multicolumn{1}{|c|}{10} & $\alpha_1$ & 0.00 & 0.00 & 0.00 & 0.00 & 0.00 & 0.00 & 0.00 & 0.00 & 0.00 \\
    \multicolumn{1}{|c|}{} & $\alpha_2$ & 0.00 & 0.00 & 0.00 & 0.00 & 0.00 & 0.00 & 0.00 & 0.00 & 0.00 \\
    \multicolumn{1}{|c|}{} & $\alpha_3$ & 0.00 & 0.00 & 0.00 & 0.00 & 0.00 & 0.00 & 0.00 & 0.00 & 0.00 \\
    \hline
    \multicolumn{1}{|c|}{12} & $\alpha_1$ & 0.00 & 0.00 & 0.00 & 0.00 & 0.00 & 0.00 & 0.00 & 0.00 & 0.00 \\
    \multicolumn{1}{|c|}{} & $\alpha_2$ & 0.00 & 0.00 & 0.00 & 0.00 & 0.00 & 0.00 & 0.00 & 0.00 & 0.00 \\
    \multicolumn{1}{|c|}{} & $\alpha_3$ & 0.00 & 0.00 & 0.00 & 0.00 & 0.00 & 0.00 & 0.00 & 0.00 & 0.00 \\
    \hline
    \multicolumn{1}{|c|}{25} & $\alpha_1$ & 0.00 & 0.00 & 0.00 & 0.00 & 0.00 & 0.00 & 0.00 & 0.00 & 0.00 \\
    \multicolumn{1}{|c|}{} & $\alpha_2$ & 0.00 & 0.00 & 0.00 & 0.00 & 0.00 & 0.00 & 0.00 & 0.00 & 0.00 \\
    \multicolumn{1}{|c|}{} & $\alpha_3$ & 0.00 & 0.24 & 0.72 & 0.00 & 0.10 & 0.48 & 0.00 & 0.27 & 0.99 \\
    \hline
    \multicolumn{1}{|c|}{50} & $\alpha_1$ & 0.00 & 0.00 & 0.00 & 0.00 & 0.00 & 0.00 & 0.00 & 0.00 & 0.00 \\
    \multicolumn{1}{|c|}{} & $\alpha_2$ & 0.00 & 0.01 & 0.06 & 0.00 & 0.00 & 0.00 & 0.00 & 0.01 & 0.06 \\
    \multicolumn{1}{|c|}{} & $\alpha_3$ & 0.00 & 0.13 & 0.61 & 0.00 & 0.20 & 0.66 & 0.00 & 0.43 & 1.26 \\
    \hline
    \multicolumn{1}{|c|}{100} & $\alpha_1$ & 0.00 & 0.00 & 0.00 & 0.00 & 0.00 & 0.00 & 0.00 & 0.00 & 0.00 \\
    \multicolumn{1}{|c|}{} & $\alpha_2$ & 0.00 & 0.04 & 0.15 & 0.00 & 0.05 & 0.17 & 0.00 & 0.08 & 0.18 \\
    \multicolumn{1}{|c|}{} & $\alpha_3$ & 0.00 & 0.00 & 0.00 & 0.00 & 0.87 & 1.47 & 0.16 & 0.75 & 1.29 \\
    \hline
    \multicolumn{1}{|c|}{250} & $\alpha_1$ & 0.00 & 0.00 & 0.01 & 0.00 & 0.03 & 0.06 & 0.04 & 0.06 & 0.13 \\
    \multicolumn{1}{|c|}{} & $\alpha_2$ & 0.00 & 0.02 & 0.15 & 0.22 & 0.48 & 0.74 & 0.00 & 0.51 & 0.84 \\
    \multicolumn{1}{|c|}{} & $\alpha_3$ & 0.00 & 0.00 & 0.04 & 0.00 & 1.97 & 3.45 & 0.91 & 2.44 & 3.94 \\
    \hline
    \multicolumn{1}{|c|}{500} & $\alpha_1$ & 0.00 & 0.00 & 0.00 & 0.14 & 0.21 & 0.29 & 0.11 & 0.19 & 0.25\\
    \multicolumn{1}{|c|}{} & $\alpha_2$ & 0.00 & 0.00 & 0.00 & 0.77 & 1.06 & 1.40 & 0.63 & 1.02 & 1.27 \\
    \multicolumn{1}{|c|}{} & $\alpha_3$ & 0.00 & 0.10 & 0.96 & 0.00 & 3.41 & 4.32 & 1.83 & 3.64 & 4.80 \\
    \hline
    \multicolumn{2}{|c|}{\textbf{Avg.}} & \textbf{0.00} & \textbf{0.02} & \textbf{0.11} & \textbf{0.05} & \textbf{0.35} & \textbf{0.54} & \textbf{0.15} & \textbf{0.39} & \textbf{0.63} \\
\hline
\end{tabular}
\label{tab--0}
\end{table}

\section{Managerial insights} \label{sec7}

In this section, some managerial insights are presented regarding the investigated problem $P2,S1|s_j,t_j|C_{max}$. We propose to compare our results with the ones of \cite{benmansour2021scheduling}. Indeed, \cite{benmansour2021scheduling} suggested a MILP formulation and a GVNS metaheuristic for the problem $P2,S2|s_j,t_j|C_{max}$ involving two dedicated servers : one for the loading operations and one for the unloading operations. Indeed, in the problem $P2,S2|s_j,t_j|C_{max}$, each job has to be loaded by a dedicated (loading) server and unloaded by a dedicated (unloading) server, respectively, immediately before and after being
processed on one of the two machines, while in the problem $P2,S1|s_j,t_j|C_{max}$, only one resource (server) is in charge of both the loading and unloading operations. The objective of this section is to show the impact of removing the unloading server on the makespan (i.e., the single server will be in charge of both the loading and unloading operations).

We propose first to improve the mathematical formulation  proposed in \cite{benmansour2021scheduling} denoted by $MIP_{2S}$ by adding the two valid inequalities proposed in Section~\ref{valin}. The new obtained formulation is denoted by $MIP_{2S}^{+}$. Therefore, the formulations $MIP_{2S}$ and $MIP_{2S}^{+}$ are compared with the formulations $CF^{+}$ and $TIF^{+}$ (since they presented a better performance than $CF$ and $TIF$). In total, four mathematical formulations are obtained and compared: two formulations regarding the problem $P2,S1|s_j,t_j|C_{max}$ with one single server ($CF^{+}$ and $TIF^{+}$), and two formulations regarding the problem $P2,S2|s_j,t_j|C_{max}$ with two dedicated servers ($MIP_{2S}$ and $MIP_{2S}^{+}$).  The computational experiments were conducted using the same computer as described in Section~\ref{sec6}. In addition, the time limit for solving the formulations $CF^{+}$, $TIF^{+}$, $MIP_{2S}$, and $MIP_{2S}^{+}$ was set to 3600~s. Note that the four formulations are compared using the same instances as presented in Section~\ref{inst} with up to 25 jobs (since we can obtain a proof of optimality for at least one formulation for each problem). To solve the $CF^{+}$, $TIF^{+}$, $MIP_{2S}$, and $MIP_{2S}^{+}$ formulations, we have used the Concert Technology library of the CPLEX 12.6 version with default settings in C++.

In Table~\ref{tabM1}, we compare the performance of $CV^{+}$, $TIF^{+}$, $MIP_{2S}$ and $MIP_{2S}^{*}$ for $n=8$. First, each instance is characterized by the following information: the ID; the number $n$ of jobs; the loading/unloading variance coefficient ($\alpha_1$, $\alpha_2$, and $\alpha_3$). Next, for each mathematical formulation, the following features are given: $i)$ the optimal makespan solution ($C_{max}^{*}$) and  $ii)$ the time required to find an optimal solution (CPU). Finally, the gap between the optimal makespan of the problem with one server and the optimal makespan of the problem with two servers denoted as $Gap_{MI}(\%)$ (calculated in  Equation~\ref{GAPML}) are presented:  

\begin{equation}
\label{GAPML}
Gap_{MI}(\%) = 100 \times \frac{C_{max}^{*}(1 server) - C_{max}^{*}(2 servers)}{C_{max}^{*}(2 servers)}
\end{equation}

The following observations can be made:

\begin{itemize}
\item Based on the formulations $CV^{+}$, $TIF^{+}$, $MIP_{2S}$ and $MIP_{2S}^{*}$, CPLEX is able to find an optimal solution for any instance. The average computing time for $CV^{+}$ is 0.71 s, and the average computational time for $MIP_{2S}^{*}$ is 0.27 s.
\item All formulations are able to find the same optimal solution for 5 instances (I1, I4, I5, I7 and I24). 
\item The average value of $Gap_{MI}(\%)$ is equal to 0.31\% for $\alpha_1$, it is equal to 0.76\% for $\alpha_2$, and it is equal to 1.45\% for $\alpha_3$. Therefore, the value $Gap_{MI}(\%)$ increases for a large loading/unloading times variance.
\item The overall average value of $Gap_{MI}(\%)$ over the 30 instances is equal to 0.84\%. It can be noticed that the gap between the two problems is very small and for 5 instances,  the same optimal solution is obtained with only one single server. 
\end{itemize}

\begin{table}[htbp]
\caption{Comparison of $CF^{+}$, $TIF^{+}$, $MIP_{2S}^{+}$  with $MIP_{2S}$ by \cite{benmansour2021scheduling} for $n=8$.}
\centering
\vspace*{3mm}
\renewcommand{\tabcolsep}{1.5pt}
{\scriptsize
\renewcommand{\arraystretch}{0.8}
\begin{tabular}{|ccc|cc|cc|cc|cc|c|}
\hline
     & \multicolumn{1}{c}{Instance} &   & \multicolumn{4}{c|}{2 servers} & \multicolumn{4}{c|}{1 server} &  \\
\cline{4-11}      & \multicolumn{1}{r}{} &   & \multicolumn{2}{c|}{$MIP_{2S}$} & \multicolumn{2}{c|}{$MIP_{2S}^{+}$} & \multicolumn{2}{c|}{$CF^{+}$} & \multicolumn{2}{c|}{$TIF^{+}$} & $Gap_{MI}(\%)$ \\
\cline{1-11}    \multicolumn{1}{|c|}{ID} & \multicolumn{1}{c|}{$n$} & \multicolumn{1}{c|}{$\alpha$} &$ C_{max}^{*}$ & CPU & $C_{max}^{*}$ & CPU & $C_{max}^{*}$ & CPU & $C_{max}^{*}$ & CPU &  \\
\hline
    \multicolumn{1}{|c|}{I1} & \multicolumn{1}{r|}{} &   & 295 & 2.98 & 295 & 0.33 & 295 & 0.46 & 295 & 4.11 & 0 \\
    \multicolumn{1}{|c|}{I2} & \multicolumn{1}{c|}{} &   & 288 & 2.19 & 288 & 0.29 & 289 & 0.61 & 289 & 5.49 & 0.35 \\
    \multicolumn{1}{|c|}{I3} & \multicolumn{1}{c|}{} &   & 258 & 2.66 & 258 & 0.31 & 259 & 0.53 & 259 & 4.27 & 0.39 \\
    \multicolumn{1}{|c|}{I4} & \multicolumn{1}{c|}{} &   & 217 & 2.22 & 217 & 0.36 & 217 & 0.42 & 217 & 2.82 & 0 \\
    \multicolumn{1}{|c|}{I5} & \multicolumn{1}{c|}{8} & \multicolumn{1}{c|}{$\alpha_{1}$} & 237 & 2.84 & 237 & 0.28 & 237 & 0.35 & 237 & 2.95 & 0 \\
    \multicolumn{1}{|c|}{I6} & \multicolumn{1}{c|}{} &   & 236 & 3.20 & 236 & 0.30 & 238 & 0.75 & 238 & 3.03 & 0.85 \\
    \multicolumn{1}{|c|}{I7} & \multicolumn{1}{c|}{} &   & 218 & 4.10 & 218 & 0.26 & 218 & 0.31 & 218 & 2.95 & 0 \\
    \multicolumn{1}{|c|}{I8} & \multicolumn{1}{c|}{} &   & 229 & 7.32 & 229 & 0.26 & 230 & 0.63 & 230 & 3.06 & 0.44 \\
    \multicolumn{1}{|c|}{I9} & \multicolumn{1}{c|}{} &   & 192 & 1.48 & 192 & 0.34 & 193 & 0.73 & 193 & 1.94 & 0.52 \\
    \multicolumn{1}{|c|}{I10} & \multicolumn{1}{c|}{} &   & 195 & 1.82 & 195 & 0.45 & 196 & 0.56 & 196 & 1.94 & 0.51 \\
    \hline
    \multicolumn{1}{|c|}{I11} & \multicolumn{1}{r|}{} &   & 376 & 11.79 & 376 & 0.23 & 383 & 1.05 & 383 & 9.91 & 1.86 \\
    \multicolumn{1}{|c|}{I12} & \multicolumn{1}{c|}{} &   & 276 & 1.05 & 276 & 0.23 & 277 & 0.54 & 277 & 5.43 & 0.36 \\
    \multicolumn{1}{|c|}{I13} & \multicolumn{1}{c|}{} &   & 275 & 2.82 & 275 & 0.32 & 276 & 0.56 & 276 & 6.06 & 0.36 \\
    \multicolumn{1}{|c|}{I14} & \multicolumn{1}{c|}{} &   & 325 & 1.49 & 325 & 0.27 & 328 & 0.73 & 328 & 5.67 & 0.92 \\
    \multicolumn{1}{|c|}{I15} & \multicolumn{1}{c|}{8} & \multicolumn{1}{c|}{$\alpha_{2}$} & 259 & 1.14 & 259 & 0.24 & 260 & 0.52 & 260 & 3.85 & 0.39 \\
    \multicolumn{1}{|c|}{I16} & \multicolumn{1}{c|}{} &   & 310 & 8.60 & 310 & 0.26 & 312 & 0.85 & 312 & 6.80 & 0.65 \\
    \multicolumn{1}{|c|}{I17} & \multicolumn{1}{c|}{} &   & 319 & 2.19 & 319 & 0.23 & 320 & 0.64 & 320 & 10.52 & 0.31 \\
    \multicolumn{1}{|c|}{I18} & \multicolumn{1}{c|}{} &   & 285 & 1.06 & 285 & 0.25 & 286 & 0.83 & 286 & 5.88 & 0.35 \\
    \multicolumn{1}{|c|}{I19} & \multicolumn{1}{c|}{} &   & 333 & 3.03 & 333 & 0.21 & 336 & 0.85 & 336 & 7.46 & 0.90 \\
    \multicolumn{1}{|c|}{I20} & \multicolumn{1}{c|}{} &   & 344 & 1.96 & 344 & 0.25 & 349 & 0.81 & 349 & 9.33 & 1.45 \\
    \hline
    \multicolumn{1}{|c|}{I21} & \multicolumn{1}{r|}{} &   & 321 & 2.27 & 321 & 0.26 & 325 & 0.98 & 325 & 8.03 & 1.25 \\
    \multicolumn{1}{|c|}{I22} & \multicolumn{1}{c|}{} &   & 397 & 4.82 & 397 & 0.30 & 408 & 1.08 & 408 & 10.12 & 2.77 \\
    \multicolumn{1}{|c|}{I23} & \multicolumn{1}{c|}{} &   & 321 & 5.72 & 321 & 0.24 & 325 & 0.99 & 325 & 15.21 & 1.25 \\
    \multicolumn{1}{|c|}{I24} & \multicolumn{1}{c|}{} &   & 248 & 2.07 & 248 & 0.22 & 248 & 0.63 & 248 & 3.62 & 0 \\
    \multicolumn{1}{|c|}{I25} & \multicolumn{1}{c|}{8} & \multicolumn{1}{c|}{$\alpha_{3}$} & 345 & 1.78 & 345 & 0.24 & 352 & 0.84 & 352 & 15.51 & 2.03 \\
    \multicolumn{1}{|c|}{I26} & \multicolumn{1}{c|}{} &   & 329 & 1.53 & 329 & 0.27 & 335 & 0.77 & 335 & 13.74 & 1.82 \\
    \multicolumn{1}{|c|}{I27} & \multicolumn{1}{c|}{} &   & 262 & 1.69 & 262 & 0.29 & 266 & 0.81 & 266 & 3.30 & 1.53 \\
    \multicolumn{1}{|c|}{I28} & \multicolumn{1}{c|}{} &   & 297 & 1.03 & 297 & 0.27 & 300 & 0.86 & 300 & 25.17 & 1.01 \\
    \multicolumn{1}{|c|}{I29} & \multicolumn{1}{c|}{} &   & 381 & 1.98 & 381 & 0.31 & 387 & 0.83 & 387 & 9.25 & 1.57 \\
    \multicolumn{1}{|c|}{I30} & \multicolumn{1}{c|}{} &   & 327 & 2.43 & 327 & 0.19 & 331 & 0.90 & 331 & 8.18 & 1.22 \\
    \hline
      & Avg. &   & 289.83 & 3.04 & 289.83 & 0.27 & 292.53 & 0.71 & 292.53 & 7.19 & 0.84 \\
\hline
\end{tabular}
}
\label{tabM1}
\end{table}

In Tables~\ref{tabM2},~\ref{tabM3},~\ref{tabM4}, we compare the performance of  $MIP_{2S}$, $MIP_{2S}^{+}$, $CF^{+}$, $TIF^{+}$, for $n \in \{10,12,25\}$. First, each instance is characterized by the following information: the ID; the number $n$ of jobs; the loading/unloading times variance coefficient ($\alpha_1$, $\alpha_2$, and $\alpha_3$). Then for each formulation, the following information is given: the upper bound ($UB_{MIP_{2S}}$, $UB_{MIP_{2S}^{+}}$, $UB_{{CF}^{+}}$, $UB_{{TIF}^{+}}$), the lower bound ($LB_{MIP_{2S}}$, $LB_{MIP_{2S}^{+}}$, $LB_{{CF}^{+}}$, $LB_{{TIF}^{+}}$), the percentage gap to optimality ($Gap_{MIP_{2S}}(\%)$, $Gap_{MIP_{2S}^{+}}(\%)$, $Gap_{{CF}^{+}}(\%)$, $Gap_{{TIF}^{+}}(\%)$), and the time required to prove optimality (CPU). Finally, the gap between the optimal makespan of the problem with one server and the optimal makespan of the problem with two servers $Gap_{MI}(\%)$ is given. (In Table~\ref{tabM4}, $Gap_{MI}(\%)$ is not reported since CPLEX is not able to find an optimal solution for any formulation). 

The following observations can be made:

\begin{itemize}
\item For $n=10$: Based on the formulations $MIP_{2S}^{+}$, $CF^{+}$, $TIF^{+}$, CPLEX is able to find an optimal solution for any instance.  Based on the formulation $MIP_{2S}$, CPLEX is able to produce an optimal solution only for 17 instances among the 30 ones. It can be noted that for the improved formulation $MIP_{2S}^{+}$, CPLEX is able to produce optimal solutions in less computational time in comparison with the original one. The average CPU time for $MIP_{2S}^{+}$ is equal to 0.33~s, whereas the average CPU time for $CF^{+}$ is equal to 19.12~s.  The overall average value of $Gap_{MI}(\%)$ over the 30 instances is equal to 0.62\%. 
\item For $n=12$ : Based on the formulations $MIP_{2S}^{+}$ and $TIF^{+}$, CPLEX is able to find an optimal solution for any instance.  Based on the formulation $CF^{+}$, CPLEX is able to produce an optimal solution only for 23 instances among the 30 ones. In addition, CPLEX is not able to produce an optimal solution solution for any instance using the formulation $MIP_{2S}$. It can be noted that for the improved formulation $MIP_{2S}^{+}$, CPLEX is able to produce optimal solutions in less computational time in comparison with the original one. The average CPU time for $MIP_{2S}^{+}$ is equal to 0.37~s, whereas the average CPU time for $TIF^{+}$ is equal to 69.02~s.  The overall average value of $Gap_{MI}(\%)$ over the 30 instances is equal to 0.61\%.  
\item For $n=25$ : In the case  of 1 server, $TIF^{+}$ is the only formulation for which CPLEX is able to produce an optimal solution for 7 instances among the 30 ones. In the case of 2 servers, $MIP_{2S}^{+}$ is the only formulation for which CPLEX is able to produce an optimal solution for 20 instances among the 30 ones. For $\alpha_3$, using the formulations $MIP_{2S}$ and $MIP_{2S}^{+}$, CPLEX is not able to produce a feasible solution for all instances (except 1 instance I118).  The overall average value of $Gap_{MI}(\%)$ for the instances that can be solved to optimality using $MIP_{2S}^{+}$ and $TIF^{+}$ (I92, I93, I95, I97, I99, I100, and I103) is equal to 0.13\%.
\end{itemize}

As it was shown in the previous Table~\ref{tabM1} and in the Tables~\ref{tabM2},~\ref{tabM3},~\ref{tabM4} in the Appendix, the gap between the optimal makespan of the problem with one server and the optimal makespan of the problem with two servers ($Gap_{MI}(\%)$) is very small for almost all small-sized instances with $n \in \{8,10,12,25\}$. Therefore, the use of an extra resource (unloading server) has a small impact on the reduction of the makespan. It can be noticed that in some cases, the same optimal makespan is obtained using only one single server. Indeed, since the loading, processing and unloading operations are non separable, the waiting time of a machine due to the unavailability of the unloading server will always be significant. To sum up, from a managerial point of view, we indicate to use only one single server for both the loading and unloading operations, which can lead to a significant reduction of costs (e.g., electricity and maintenance) and hence a better preservation of environment.

\section{Conclusions} \label{sec8}

In this paper, the scheduling problem with two identical parallel machines and a single server in charge of loading and unloading operations of jobs was addressed. Each job has to be loaded by a single server immediately before its processing. Once the processing operations finished, each job has to be immediately unloaded by the same server. The objective function considered was 
the minimization of the makespan. We presented two mixed-integer linear programming (MILP) formulations to model the problem, namely: a completion-time variables  formulation $CF$ and a time-indexed formulation $TIF$, and we proposed sets of inequalities that can be used to improve the $CF$ formulation. We also proposed some polynomial-time solvable cases and a tight lower bound. In addition, we showed that the minimization of the makespan is equivalent to the minimization of the total idle time of the machines.

Since the mathematical formulations were not able to cope with the majority of instances, an efficient General Variable Neighborhood Search metaheuristic (GVNS) with two  mechanisms for finding an initial solution (one with an iterative improvement procedure (GVNS~I) and one with a random initial solution (GVNS~II)), and a Greedy Randomized Adaptive Search Procedures (GRASP) metaheuristic were designed. To validate the performance of the proposed GVNS~I, GVNS~II and GRASP approaches, exhaustive computational experiments on 240 instances were performed. For small-sized instances with up to 25 jobs, GVNS~I,  GVNS~II and GRASP algorithms outperformed the MILP formulations in terms of
the computational time to find an optimal solution. However, for medium and large-sized instances, the GVNS~I yielded better results than the other approaches. In particular, the average percentage deviation from the theoretical lower bound was equal to 0.642\%. Finally, we presented some managerial insights, and our MILP formulations were compared with the ones of \cite{benmansour2021scheduling} regarding the problem $P2,S2|s_j,p_j|C_{max}$ involving a dedicated loading server and a dedicated unloading server. It turned out that adding an unloading server (extra resource) contributed less to the reduction of the makespan (since the average percentage deviation of the optimal makespan for the two problems is equal to 0.69\% for small-sized instances with up to 12 jobs), and in some cases the same optimal solution can be obtained using only one single server. 
In future research, it would be interesting to adapt the
presented approaches to the more general problem $P,S1|s_j,t_j|C_{max}$ involving an arbitrary number of machines. Further works could also measure the effect of the unloading server on the makespan for the problem $P,S2|s_j,t_j|C_{max}$.




\bibliography{mybibfile}

\begin{thebibliography}{49}
\expandafter\ifx\csname natexlab\endcsname\relax\def\natexlab#1{#1}\fi
\providecommand{\url}[1]{\texttt{#1}}
\providecommand{\href}[2]{#2}
\providecommand{\path}[1]{#1}
\providecommand{\DOIprefix}{doi:}
\providecommand{\ArXivprefix}{arXiv:}
\providecommand{\URLprefix}{URL: }
\providecommand{\Pubmedprefix}{pmid:}
\providecommand{\doi}[1]{\href{http://dx.doi.org/#1}{\path{#1}}}
\providecommand{\Pubmed}[1]{\href{pmid:#1}{\path{#1}}}
\providecommand{\bibinfo}[2]{#2}
\ifx\xfnm\undefined \def\xfnm[#1]{\unskip,\space#1}\fi
\bibitem[{Abdekhodaee et~al.(2006)Abdekhodaee, Wirth and
  Gan}]{abdekhodaee2006scheduling}
\bibinfo{author}{Abdekhodaee\xfnm[ A.H.]}, \bibinfo{author}{Wirth\xfnm[ A.]},
  \bibinfo{author}{Gan\xfnm[ H.S.]}.
\newblock \bibinfo{title}{Scheduling two parallel machines with a single
  server: the general case}.
\newblock \bibinfo{journal}{Computers \& Operations Research}
  \bibinfo{year}{2006};\bibinfo{volume}{33}(\bibinfo{number}{4}):\bibinfo{pages}{994--1009}.
\bibitem[{Alharkan et~al.(2019)Alharkan, Saleh, Ghaleb, Kaid, Farhan and
  Almarfadi}]{alharkan2019tabu}
\bibinfo{author}{Alharkan\xfnm[ I.]}, \bibinfo{author}{Saleh\xfnm[ M.]},
  \bibinfo{author}{Ghaleb\xfnm[ M.A.]}, \bibinfo{author}{Kaid\xfnm[ H.]},
  \bibinfo{author}{Farhan\xfnm[ A.]}, \bibinfo{author}{Almarfadi\xfnm[ A.]}.
\newblock \bibinfo{title}{Tabu search and particle swarm optimization
  algorithms for two identical parallel machines scheduling problem with a
  single server}.
\newblock \bibinfo{journal}{Journal of King Saud University-Engineering
  Sciences} \bibinfo{year}{2019};.
\bibitem[{Allahverdi and Soroush(2008)}]{allahverdi2008significance}
\bibinfo{author}{Allahverdi\xfnm[ A.]}, \bibinfo{author}{Soroush\xfnm[ H.]}.
\newblock \bibinfo{title}{The significance of reducing setup times/setup
  costs}.
\newblock \bibinfo{journal}{European Journal of Operational Research}
  \bibinfo{year}{2008};\bibinfo{volume}{187}(\bibinfo{number}{3}):\bibinfo{pages}{978--984}.
\bibitem[{Arnaout(2017)}]{arnaout2017heuristics}
\bibinfo{author}{Arnaout\xfnm[ J.P.]}.
\newblock \bibinfo{title}{Heuristics for the two-machine scheduling problem
  with a single server}.
\newblock \bibinfo{journal}{International Transactions in Operational Research}
  \bibinfo{year}{2017};\bibinfo{volume}{24}(\bibinfo{number}{6}):\bibinfo{pages}{1347--1355}.
\bibitem[{Arnaout(2021)}]{arnaout2021worm}
\bibinfo{author}{Arnaout\xfnm[ J.P.]}.
\newblock \bibinfo{title}{Worm optimisation algorithm to minimise the makespan
  for the two-machine scheduling problem with a single server}.
\newblock \bibinfo{journal}{International Journal of Operational Research}
  \bibinfo{year}{2021};\bibinfo{volume}{41}(\bibinfo{number}{2}):\bibinfo{pages}{270--281}.
\bibitem[{B{\'a}ez et~al.(2019)B{\'a}ez, Angel-Bello, Alvarez and
  Meli{\'a}n-Batista}]{baez2019hybrid}
\bibinfo{author}{B{\'a}ez\xfnm[ S.]}, \bibinfo{author}{Angel-Bello\xfnm[ F.]},
  \bibinfo{author}{Alvarez\xfnm[ A.]},
  \bibinfo{author}{Meli{\'a}n-Batista\xfnm[ B.]}.
\newblock \bibinfo{title}{A hybrid metaheuristic algorithm for a parallel
  machine scheduling problem with dependent setup times}.
\newblock \bibinfo{journal}{Computers \& Industrial Engineering}
  \bibinfo{year}{2019};\bibinfo{volume}{131}:\bibinfo{pages}{295--305}.
\bibitem[{Baker and Keller(2010)}]{baker2010solving}
\bibinfo{author}{Baker\xfnm[ K.R.]}, \bibinfo{author}{Keller\xfnm[ B.]}.
\newblock \bibinfo{title}{Solving the single-machine sequencing problem using
  integer programming}.
\newblock \bibinfo{journal}{Computers \& Industrial Engineering}
  \bibinfo{year}{2010};\bibinfo{volume}{59}(\bibinfo{number}{4}):\bibinfo{pages}{730--735}.
\bibitem[{Balas(1985)}]{balas1985facial}
\bibinfo{author}{Balas\xfnm[ E.]}.
\newblock \bibinfo{title}{On the facial structure of scheduling polyhedra}.
\newblock In: \bibinfo{booktitle}{Mathematical Programming Essays in Honor of
  George B. Dantzig Part I}. \bibinfo{publisher}{Springer};
  \bibinfo{year}{1985}. p. \bibinfo{pages}{179--218}.
\bibitem[{Bektur and Sara{\c{c}}(2019)}]{bektur2019mathematical}
\bibinfo{author}{Bektur\xfnm[ G.]}, \bibinfo{author}{Sara{\c{c}}\xfnm[ T.]}.
\newblock \bibinfo{title}{A mathematical model and heuristic algorithms for an
  unrelated parallel machine scheduling problem with sequence-dependent setup
  times, machine eligibility restrictions and a common server}.
\newblock \bibinfo{journal}{Computers \& Operations Research}
  \bibinfo{year}{2019};\bibinfo{volume}{103}:\bibinfo{pages}{46--63}.
\bibitem[{Benmansour and Sifaleras(2021)}]{benmansour2021scheduling}
\bibinfo{author}{Benmansour\xfnm[ R.]}, \bibinfo{author}{Sifaleras\xfnm[ A.]}.
\newblock \bibinfo{title}{Scheduling in parallel machines with two servers: the
  restrictive case}.
\newblock In: \bibinfo{booktitle}{Variable Neighborhood Search: 8th
  International Conference, ICVNS 2021, Abu Dhabi, United Arab Emirates, March
  21--25, 2021, Proceedings 8}. \bibinfo{organization}{Springer International
  Publishing}; \bibinfo{year}{2021}. p. \bibinfo{pages}{71--82}.
\bibitem[{Chung et~al.(2019)Chung, Gupta, Zhao and
  Werner}]{chung2019minimizing}
\bibinfo{author}{Chung\xfnm[ T.]}, \bibinfo{author}{Gupta\xfnm[ J.N.]},
  \bibinfo{author}{Zhao\xfnm[ H.]}, \bibinfo{author}{Werner\xfnm[ F.]}.
\newblock \bibinfo{title}{Minimizing the makespan on two identical parallel
  machines with mold constraints}.
\newblock \bibinfo{journal}{Computers \& Operations Research}
  \bibinfo{year}{2019};\bibinfo{volume}{105}:\bibinfo{pages}{141--155}.
\bibitem[{Elidrissi et~al.(2021)Elidrissi, Benmansour, Benbrahim and
  Duvivier}]{elidrissi2021mathematical}
\bibinfo{author}{Elidrissi\xfnm[ A.]}, \bibinfo{author}{Benmansour\xfnm[ R.]},
  \bibinfo{author}{Benbrahim\xfnm[ M.]}, \bibinfo{author}{Duvivier\xfnm[ D.]}.
\newblock \bibinfo{title}{Mathematical formulations for the parallel machine
  scheduling problem with a single server}.
\newblock \bibinfo{journal}{International Journal of Production Research}
  \bibinfo{year}{2021};\bibinfo{volume}{59}(\bibinfo{number}{20}):\bibinfo{pages}{6166--6184}.
\bibitem[{Elidrissi et~al.(2022)Elidrissi, Benmansour and
  Sifaleras}]{elidrissi2022general}
\bibinfo{author}{Elidrissi\xfnm[ A.]}, \bibinfo{author}{Benmansour\xfnm[ R.]},
  \bibinfo{author}{Sifaleras\xfnm[ A.]}.
\newblock \bibinfo{title}{General variable neighborhood search for the parallel
  machine scheduling problem with two common servers}.
\newblock \bibinfo{journal}{Optimization Letters}
  \bibinfo{year}{2022};:\bibinfo{pages}{1--31}.
\bibitem[{Feo and Resende(1995)}]{feo1995greedy}
\bibinfo{author}{Feo\xfnm[ T.A.]}, \bibinfo{author}{Resende\xfnm[ M.G.]}.
\newblock \bibinfo{title}{Greedy randomized adaptive search procedures}.
\newblock \bibinfo{journal}{Journal of global optimization}
  \bibinfo{year}{1995};\bibinfo{volume}{6}(\bibinfo{number}{2}):\bibinfo{pages}{109--133}.
\bibitem[{Graham et~al.(1979)Graham, Lawler, Lenstra and
  Kan}]{graham1979optimization}
\bibinfo{author}{Graham\xfnm[ R.L.]}, \bibinfo{author}{Lawler\xfnm[ E.L.]},
  \bibinfo{author}{Lenstra\xfnm[ J.K.]}, \bibinfo{author}{Kan\xfnm[ A.R.]}.
\newblock \bibinfo{title}{Optimization and approximation in deterministic
  sequencing and scheduling: a survey}.
\newblock In: \bibinfo{booktitle}{Annals of discrete mathematics}.
  \bibinfo{publisher}{Elsevier}; volume~\bibinfo{volume}{5};
  \bibinfo{year}{1979}. p. \bibinfo{pages}{287--326}.
\bibitem[{Hamzadayi and Yildiz(2016)}]{hamzadayi2016event}
\bibinfo{author}{Hamzadayi\xfnm[ A.]}, \bibinfo{author}{Yildiz\xfnm[ G.]}.
\newblock \bibinfo{title}{Event driven strategy based complete rescheduling
  approaches for dynamic m identical parallel machines scheduling problem with
  a common server}.
\newblock \bibinfo{journal}{Computers \& Industrial Engineering}
  \bibinfo{year}{2016};\bibinfo{volume}{91}:\bibinfo{pages}{66--84}.
\bibitem[{Hamzadayi and Yildiz(2017)}]{hamzadayi2017modeling}
\bibinfo{author}{Hamzadayi\xfnm[ A.]}, \bibinfo{author}{Yildiz\xfnm[ G.]}.
\newblock \bibinfo{title}{Modeling and solving static m identical parallel
  machines scheduling problem with a common server and sequence dependent setup
  times}.
\newblock \bibinfo{journal}{Computers \& Industrial Engineering}
  \bibinfo{year}{2017};\bibinfo{volume}{106}:\bibinfo{pages}{287--298}.
\bibitem[{Hansen et~al.(2017)Hansen, Mladenovi{\'c}, Todosijevi{\'c} and
  Hanafi}]{hansen2017variable}
\bibinfo{author}{Hansen\xfnm[ P.]}, \bibinfo{author}{Mladenovi{\'c}\xfnm[ N.]},
  \bibinfo{author}{Todosijevi{\'c}\xfnm[ R.]}, \bibinfo{author}{Hanafi\xfnm[
  S.]}.
\newblock \bibinfo{title}{Variable neighborhood search: basics and variants}.
\newblock \bibinfo{journal}{EURO Journal on Computational Optimization}
  \bibinfo{year}{2017};\bibinfo{volume}{5}(\bibinfo{number}{3}):\bibinfo{pages}{423--454}.
\bibitem[{Hasani et~al.(2014{\natexlab{a}})Hasani, Kravchenko and
  Werner}]{hasani2014block}
\bibinfo{author}{Hasani\xfnm[ K.]}, \bibinfo{author}{Kravchenko\xfnm[ S.A.]},
  \bibinfo{author}{Werner\xfnm[ F.]}.
\newblock \bibinfo{title}{Block models for scheduling jobs on two parallel
  machines with a single server}.
\newblock \bibinfo{journal}{Computers \& Operations Research}
  \bibinfo{year}{2014}{\natexlab{a}};\bibinfo{volume}{41}:\bibinfo{pages}{94--97}.
\bibitem[{Hasani et~al.(2014{\natexlab{b}})Hasani, Kravchenko and
  Werner}]{hasani2014hybridization}
\bibinfo{author}{Hasani\xfnm[ K.]}, \bibinfo{author}{Kravchenko\xfnm[ S.A.]},
  \bibinfo{author}{Werner\xfnm[ F.]}.
\newblock \bibinfo{title}{A hybridization of harmony search and simulated
  annealing to minimize mean flow time for the two-machine scheduling problem
  with a single server}.
\newblock \bibinfo{journal}{Int J Oper Res}
  \bibinfo{year}{2014}{\natexlab{b}};\bibinfo{volume}{3}(\bibinfo{number}{1}):\bibinfo{pages}{9--26}.
\bibitem[{Hasani et~al.(2014{\natexlab{c}})Hasani, Kravchenko and
  Werner}]{hasani2014minimising}
\bibinfo{author}{Hasani\xfnm[ K.]}, \bibinfo{author}{Kravchenko\xfnm[ S.A.]},
  \bibinfo{author}{Werner\xfnm[ F.]}.
\newblock \bibinfo{title}{Minimising interference for scheduling two parallel
  machines with a single server}.
\newblock \bibinfo{journal}{International Journal of Production Research}
  \bibinfo{year}{2014}{\natexlab{c}};\bibinfo{volume}{52}(\bibinfo{number}{24}):\bibinfo{pages}{7148--7158}.
\bibitem[{Hasani et~al.(2014{\natexlab{d}})Hasani, Kravchenko and
  Werner}]{hasani2014simulated}
\bibinfo{author}{Hasani\xfnm[ K.]}, \bibinfo{author}{Kravchenko\xfnm[ S.A.]},
  \bibinfo{author}{Werner\xfnm[ F.]}.
\newblock \bibinfo{title}{Simulated annealing and genetic algorithms for the
  two-machine scheduling problem with a single server}.
\newblock \bibinfo{journal}{International Journal of Production Research}
  \bibinfo{year}{2014}{\natexlab{d}};\bibinfo{volume}{52}(\bibinfo{number}{13}):\bibinfo{pages}{3778--3792}.
\bibitem[{Hu et~al.(2013)Hu, Zhang, Dong and Jiang}]{hu2013parallel}
\bibinfo{author}{Hu\xfnm[ J.]}, \bibinfo{author}{Zhang\xfnm[ Q.]},
  \bibinfo{author}{Dong\xfnm[ J.]}, \bibinfo{author}{Jiang\xfnm[ Y.]}.
\newblock \bibinfo{title}{Parallel machine scheduling with a single server:
  Loading and unloading}.
\newblock In: \bibinfo{booktitle}{International Conference on Combinatorial
  Optimization and Applications}. \bibinfo{organization}{Springer};
  \bibinfo{year}{2013}. p. \bibinfo{pages}{106--116}.
\bibitem[{Huang et~al.(2010)Huang, Cai and Zhang}]{huang2010parallel}
\bibinfo{author}{Huang\xfnm[ S.]}, \bibinfo{author}{Cai\xfnm[ L.]},
  \bibinfo{author}{Zhang\xfnm[ X.]}.
\newblock \bibinfo{title}{Parallel dedicated machine scheduling problem with
  sequence-dependent setups and a single server}.
\newblock \bibinfo{journal}{Computers \& Industrial Engineering}
  \bibinfo{year}{2010};\bibinfo{volume}{58}(\bibinfo{number}{1}):\bibinfo{pages}{165--174}.
\bibitem[{Jiang et~al.(2014)Jiang, Wang and Zhou}]{jiang2014optimal}
\bibinfo{author}{Jiang\xfnm[ Y.]}, \bibinfo{author}{Wang\xfnm[ H.]},
  \bibinfo{author}{Zhou\xfnm[ P.]}.
\newblock \bibinfo{title}{An optimal preemptive algorithm for the single-server
  parallel-machine scheduling with loading and unloading times}.
\newblock \bibinfo{journal}{Asia-Pacific Journal of Operational Research}
  \bibinfo{year}{2014};\bibinfo{volume}{31}(\bibinfo{number}{05}):\bibinfo{pages}{1450039}.
\bibitem[{Jiang et~al.(2015{\natexlab{a}})Jiang, Yu, Zhou and
  Hu}]{jiang2015online}
\bibinfo{author}{Jiang\xfnm[ Y.]}, \bibinfo{author}{Yu\xfnm[ F.]},
  \bibinfo{author}{Zhou\xfnm[ P.]}, \bibinfo{author}{Hu\xfnm[ J.]}.
\newblock \bibinfo{title}{Online algorithms for scheduling two parallel
  machines with a single server}.
\newblock \bibinfo{journal}{International Transactions in Operational Research}
  \bibinfo{year}{2015}{\natexlab{a}};\bibinfo{volume}{22}(\bibinfo{number}{5}):\bibinfo{pages}{913--927}.
\bibitem[{Jiang et~al.(2015{\natexlab{b}})Jiang, Zhang, Hu, Dong and
  Ji}]{jiang2015single}
\bibinfo{author}{Jiang\xfnm[ Y.]}, \bibinfo{author}{Zhang\xfnm[ Q.]},
  \bibinfo{author}{Hu\xfnm[ J.]}, \bibinfo{author}{Dong\xfnm[ J.]},
  \bibinfo{author}{Ji\xfnm[ M.]}.
\newblock \bibinfo{title}{Single-server parallel-machine scheduling with
  loading and unloading times}.
\newblock \bibinfo{journal}{Journal of Combinatorial Optimization}
  \bibinfo{year}{2015}{\natexlab{b}};\bibinfo{volume}{30}(\bibinfo{number}{2}):\bibinfo{pages}{201--213}.
\bibitem[{Jiang et~al.(2017)Jiang, Zhou, Wang and Hu}]{jiang2017scheduling}
\bibinfo{author}{Jiang\xfnm[ Y.]}, \bibinfo{author}{Zhou\xfnm[ P.]},
  \bibinfo{author}{Wang\xfnm[ H.]}, \bibinfo{author}{Hu\xfnm[ J.]}.
\newblock \bibinfo{title}{Scheduling on two parallel machines with two
  dedicated servers}.
\newblock \bibinfo{journal}{The ANZIAM Journal}
  \bibinfo{year}{2017};\bibinfo{volume}{58}(\bibinfo{number}{3-4}):\bibinfo{pages}{314--323}.
\bibitem[{Keha et~al.(2009)Keha, Khowala and Fowler}]{keha2009mixed}
\bibinfo{author}{Keha\xfnm[ A.B.]}, \bibinfo{author}{Khowala\xfnm[ K.]},
  \bibinfo{author}{Fowler\xfnm[ J.W.]}.
\newblock \bibinfo{title}{Mixed integer programming formulations for single
  machine scheduling problems}.
\newblock \bibinfo{journal}{Computers \& Industrial Engineering}
  \bibinfo{year}{2009};\bibinfo{volume}{56}(\bibinfo{number}{1}):\bibinfo{pages}{357--367}.
\bibitem[{Kim and Lee(2021)}]{kim2021scheduling}
\bibinfo{author}{Kim\xfnm[ H.J.]}, \bibinfo{author}{Lee\xfnm[ J.H.]}.
\newblock \bibinfo{title}{Scheduling uniform parallel dedicated machines with
  job splitting, sequence-dependent setup times, and multiple servers}.
\newblock \bibinfo{journal}{Computers \& Operations Research}
  \bibinfo{year}{2021};\bibinfo{volume}{126}:\bibinfo{pages}{105115}.
\bibitem[{Kim and Lee(2012)}]{kim2012mip}
\bibinfo{author}{Kim\xfnm[ M.Y.]}, \bibinfo{author}{Lee\xfnm[ Y.H.]}.
\newblock \bibinfo{title}{Mip models and hybrid algorithm for minimizing the
  makespan of parallel machines scheduling problem with a single server}.
\newblock \bibinfo{journal}{Computers \& Operations Research}
  \bibinfo{year}{2012};\bibinfo{volume}{39}(\bibinfo{number}{11}):\bibinfo{pages}{2457--2468}.
\bibitem[{Koulamas(1996)}]{koulamas1996scheduling}
\bibinfo{author}{Koulamas\xfnm[ C.P.]}.
\newblock \bibinfo{title}{Scheduling two parallel semiautomatic machines to
  minimize machine interference}.
\newblock \bibinfo{journal}{Computers \& Operations Research}
  \bibinfo{year}{1996};\bibinfo{volume}{23}(\bibinfo{number}{10}):\bibinfo{pages}{945--956}.
\bibitem[{Kramer et~al.(2021)Kramer, Iori and Lacomme}]{kramer2021mathematical}
\bibinfo{author}{Kramer\xfnm[ A.]}, \bibinfo{author}{Iori\xfnm[ M.]},
  \bibinfo{author}{Lacomme\xfnm[ P.]}.
\newblock \bibinfo{title}{Mathematical formulations for scheduling jobs on
  identical parallel machines with family setup times and total weighted
  completion time minimization}.
\newblock \bibinfo{journal}{European Journal of Operational Research}
  \bibinfo{year}{2021};\bibinfo{volume}{289}(\bibinfo{number}{3}):\bibinfo{pages}{825--840}.
\bibitem[{Kravchenko and Werner(1997)}]{kravchenko1997parallel}
\bibinfo{author}{Kravchenko\xfnm[ S.A.]}, \bibinfo{author}{Werner\xfnm[ F.]}.
\newblock \bibinfo{title}{Parallel machine scheduling problems with a single
  server}.
\newblock \bibinfo{journal}{Mathematical and Computer Modelling}
  \bibinfo{year}{1997};\bibinfo{volume}{26}(\bibinfo{number}{12}):\bibinfo{pages}{1--11}.
\bibitem[{Kravchenko and Werner(1998)}]{kravchenko1998scheduling}
\bibinfo{author}{Kravchenko\xfnm[ S.A.]}, \bibinfo{author}{Werner\xfnm[ F.]}.
\newblock \bibinfo{title}{Scheduling on parallel machines with a single and
  multiple servers}.
\newblock \bibinfo{journal}{Otto-von-Guericke-Universitat Magdeburg}
  \bibinfo{year}{1998};\bibinfo{volume}{30}(\bibinfo{number}{98}):\bibinfo{pages}{1--18}.
\bibitem[{Lee and Kim(2021)}]{lee2021heuristic}
\bibinfo{author}{Lee\xfnm[ J.H.]}, \bibinfo{author}{Kim\xfnm[ H.J.]}.
\newblock \bibinfo{title}{A heuristic algorithm for identical parallel machine
  scheduling: splitting jobs, sequence-dependent setup times, and limited setup
  operators}.
\newblock \bibinfo{journal}{Flexible Services and Manufacturing Journal}
  \bibinfo{year}{2021};\bibinfo{volume}{33}:\bibinfo{pages}{992--1026}.
\bibitem[{Maecker et~al.(2023)Maecker, Shen and
  M{\"o}nch}]{maecker2023unrelated}
\bibinfo{author}{Maecker\xfnm[ S.]}, \bibinfo{author}{Shen\xfnm[ L.]},
  \bibinfo{author}{M{\"o}nch\xfnm[ L.]}.
\newblock \bibinfo{title}{Unrelated parallel machine scheduling with
  eligibility constraints and delivery times to minimize total weighted
  tardiness}.
\newblock \bibinfo{journal}{Computers \& Operations Research}
  \bibinfo{year}{2023};\bibinfo{volume}{149}:\bibinfo{pages}{105999}.
\bibitem[{Michael(2018)}]{michael2018scheduling}
\bibinfo{author}{Michael\xfnm[ L.P.]}.
\newblock \bibinfo{title}{Scheduling: theory, algorithms, and systems}.
\newblock \bibinfo{publisher}{Springer}, \bibinfo{year}{2018}.
\bibitem[{Mladenovi{\'c} and Hansen(1997)}]{mladenovic1997variable}
\bibinfo{author}{Mladenovi{\'c}\xfnm[ N.]}, \bibinfo{author}{Hansen\xfnm[ P.]}.
\newblock \bibinfo{title}{Variable neighborhood search}.
\newblock \bibinfo{journal}{Computers \& operations research}
  \bibinfo{year}{1997};\bibinfo{volume}{24}(\bibinfo{number}{11}):\bibinfo{pages}{1097--1100}.
\bibitem[{Ou et~al.(2010)Ou, Qi and Lee}]{ou2010parallel}
\bibinfo{author}{Ou\xfnm[ J.]}, \bibinfo{author}{Qi\xfnm[ X.]},
  \bibinfo{author}{Lee\xfnm[ C.Y.]}.
\newblock \bibinfo{title}{Parallel machine scheduling with multiple unloading
  servers}.
\newblock \bibinfo{journal}{Journal of Scheduling}
  \bibinfo{year}{2010};\bibinfo{volume}{13}(\bibinfo{number}{3}):\bibinfo{pages}{213--226}.
\bibitem[{Ruiz and St{\"u}tzle(2007)}]{ruiz2007simple}
\bibinfo{author}{Ruiz\xfnm[ R.]}, \bibinfo{author}{St{\"u}tzle\xfnm[ T.]}.
\newblock \bibinfo{title}{A simple and effective iterated greedy algorithm for
  the permutation flowshop scheduling problem}.
\newblock \bibinfo{journal}{European journal of operational research}
  \bibinfo{year}{2007};\bibinfo{volume}{177}(\bibinfo{number}{3}):\bibinfo{pages}{2033--2049}.
\bibitem[{Ruiz and St{\"u}tzle(2008)}]{ruiz2008iterated}
\bibinfo{author}{Ruiz\xfnm[ R.]}, \bibinfo{author}{St{\"u}tzle\xfnm[ T.]}.
\newblock \bibinfo{title}{An iterated greedy heuristic for the sequence
  dependent setup times flowshop problem with makespan and weighted tardiness
  objectives}.
\newblock \bibinfo{journal}{European Journal of Operational Research}
  \bibinfo{year}{2008};\bibinfo{volume}{187}(\bibinfo{number}{3}):\bibinfo{pages}{1143--1159}.
\bibitem[{Silva et~al.(2019)Silva, Teixeira and Subramanian}]{silva2019exact}
\bibinfo{author}{Silva\xfnm[ J.M.P.]}, \bibinfo{author}{Teixeira\xfnm[ E.]},
  \bibinfo{author}{Subramanian\xfnm[ A.]}.
\newblock \bibinfo{title}{Exact and metaheuristic approaches for identical
  parallel machine scheduling with a common server and sequence-dependent setup
  times}.
\newblock \bibinfo{journal}{Journal of the Operational Research Society}
  \bibinfo{year}{2019};:\bibinfo{pages}{1--15}.
\bibitem[{Sousa and Wolsey(1992)}]{sousa1992time}
\bibinfo{author}{Sousa\xfnm[ J.P.]}, \bibinfo{author}{Wolsey\xfnm[ L.A.]}.
\newblock \bibinfo{title}{A time indexed formulation of non-preemptive single
  machine scheduling problems}.
\newblock \bibinfo{journal}{Mathematical programming}
  \bibinfo{year}{1992};\bibinfo{volume}{54}(\bibinfo{number}{1}):\bibinfo{pages}{353--367}.
\bibitem[{Todosijevi{\'c} et~al.(2016)Todosijevi{\'c}, Benmansour, Hanafi,
  Mladenovi{\'c} and Artiba}]{todosijevic2016nested}
\bibinfo{author}{Todosijevi{\'c}\xfnm[ R.]}, \bibinfo{author}{Benmansour\xfnm[
  R.]}, \bibinfo{author}{Hanafi\xfnm[ S.]},
  \bibinfo{author}{Mladenovi{\'c}\xfnm[ N.]}, \bibinfo{author}{Artiba\xfnm[
  A.]}.
\newblock \bibinfo{title}{Nested general variable neighborhood search for the
  periodic maintenance problem}.
\newblock \bibinfo{journal}{European Journal of Operational Research}
  \bibinfo{year}{2016};\bibinfo{volume}{252}(\bibinfo{number}{2}):\bibinfo{pages}{385--396}.
\bibitem[{Unlu and Mason(2010)}]{unlu2010evaluation}
\bibinfo{author}{Unlu\xfnm[ Y.]}, \bibinfo{author}{Mason\xfnm[ S.J.]}.
\newblock \bibinfo{title}{Evaluation of mixed integer programming formulations
  for non-preemptive parallel machine scheduling problems}.
\newblock \bibinfo{journal}{Computers \& Industrial Engineering}
  \bibinfo{year}{2010};\bibinfo{volume}{58}(\bibinfo{number}{4}):\bibinfo{pages}{785--800}.
\bibitem[{Werner and Kravchenko(2010)}]{werner2010scheduling}
\bibinfo{author}{Werner\xfnm[ F.]}, \bibinfo{author}{Kravchenko\xfnm[ S.A.]}.
\newblock \bibinfo{title}{Scheduling with multiple servers}.
\newblock \bibinfo{journal}{Automation and Remote Control}
  \bibinfo{year}{2010};\bibinfo{volume}{71}(\bibinfo{number}{10}):\bibinfo{pages}{2109--2121}.
\bibitem[{Xie et~al.(2012)Xie, Li, Zhou and Zheng}]{xie2012scheduling}
\bibinfo{author}{Xie\xfnm[ X.]}, \bibinfo{author}{Li\xfnm[ Y.]},
  \bibinfo{author}{Zhou\xfnm[ H.]}, \bibinfo{author}{Zheng\xfnm[ Y.]}.
\newblock \bibinfo{title}{Scheduling parallel machines with a single server}.
\newblock In: \bibinfo{booktitle}{Proceedings of 2012 International Conference
  on Measurement, Information and Control}. \bibinfo{organization}{IEEE};
  volume~\bibinfo{volume}{1}; \bibinfo{year}{2012}. p.
  \bibinfo{pages}{453--456}.
\bibitem[{Yepes-Borrero et~al.(2020)Yepes-Borrero, Villa, Perea and
  Caballero-Villalobos}]{yepes2020grasp}
\bibinfo{author}{Yepes-Borrero\xfnm[ J.C.]}, \bibinfo{author}{Villa\xfnm[ F.]},
  \bibinfo{author}{Perea\xfnm[ F.]},
  \bibinfo{author}{Caballero-Villalobos\xfnm[ J.P.]}.
\newblock \bibinfo{title}{Grasp algorithm for the unrelated parallel machine
  scheduling problem with setup times and additional resources}.
\newblock \bibinfo{journal}{Expert Systems with Applications}
  \bibinfo{year}{2020};\bibinfo{volume}{141}:\bibinfo{pages}{112959}.

\end{thebibliography}

\newpage
\appendix

\section{Detailed results}

\begin{table}[!h]
\caption{Comparison of $CF^{+}$, $TIF^{+}$, GVNS~I, GVNS~II and GRASP for $n=50$.}
\centering
\vspace*{3mm}
\renewcommand{\tabcolsep}{0.85pt}
{\scriptsize
\renewcommand{\arraystretch}{0.70}
\begin{tabular}{|c|c|c|c|cccc|cccc|ccc|ccc|ccc|}
\hline
\multicolumn{4}{|c|}{Instance} & \multicolumn{4}{c|}{$CF^{+}$} & \multicolumn{4}{c|}{$TIF^{+}$} & \multicolumn{3}{c|}{GVNS I} & \multicolumn{3}{c|}{GVNS II} & \multicolumn{3}{c|}{GRASP} \\
\hline
    ID & $n$ & $\alpha$ & $LB_{T}$ & $UB_{CF^{+}}$ & $LB_{CF^{+}}$ & $Gap_{CF^{+}}(\%)$ & CPU & $UB_{TIF^{+}}$ & $LB_{TIF^{+}}$ & $Gap_{TIF^{+}}(\%)$ & CPU & $C_{max}^{best}$ & $C_{max}^{avg}$ & CPU & $C_{max}^{best}$ & $C_{max}^{avg}$ & CPU & $C_{max}^{best}$ & $C_{max}^{avg}$ & CPU \\
\hline
    I121 &   &   & 1742.5 & 1745 & 1741.5 & 0.20 & 3600 & 1764 & 911.2 & 48.35 & 3600 & \textbf{1743} & 1743 & 0.19 & \textbf{1743} & 1743 & 0.38 & \textbf{1743} & 1743 & 0.19 \\
    I122 &   &   & 1558 & 1563 & 1557 & 0.38 & 3600 & 1582 & 874.8 & 44.70 & 3600 & \textbf{1558} & 1558 & 0.15 & \textbf{1558} & 1558 & 0.66 & \textbf{1558} & 1558 & 0.39 \\
    I123 &   &   & 1659.5 & 1662 & 1658.5 & 0.21 & 3600 & 1683 & 869 & 48.37 & 3600 & \textbf{1660} & 1660 & 0.12 & \textbf{1660} & 1660 & 0.35 & \textbf{1660} & 1660 & 0.36 \\
    I124 &   &   & 1453 & 1454 & 1452 & 0.14 & 3600 & 1482 & 815.2 & 45.00 & 3600 & \textbf{1453} & 1453 & 0.30 & \textbf{1453} & 1453 & 0.42 & \textbf{1453} & 1453 & 0.29 \\
    I125 & 50 & $\alpha_1$ & 1627 & 1634 & 1626 & 0.49 & 3600 & 1649 & 851.6 & 48.36 & 3600 & \textbf{1627} & 1627 & 0.72 & \textbf{1627} & 1627 & 0.64 & \textbf{1627} & 1627 & 1.25 \\
    I126 &   &   & 1664.5 & 1671 & 1663.5 & 0.45 & 3600 & 1226628 & 870.7 & 99.93 & 3600 & \textbf{1665} & 1665 & 0.22 & \textbf{1665} & 1665 & 0.27 & \textbf{1665} & 1665 & 0.20 \\
    I127 &   &   & 1527 & 1528 & 1526 & 0.13 & 3600 & 1553 & 870.3 & 43.96 & 3600 & \textbf{1527} & 1527 & 0.18 & \textbf{1527} & 1527 & 0.58 & \textbf{1527} & 1527 & 0.50 \\
    I128 &   &   & 1503.5 & 1504 & 1502.5 & 0.10 & 3600 & 1537 & 857.6 & 44.20 & 3600 & \textbf{1504} & 1504 & 0.07 & \textbf{1504} & 1504 & 0.13 & \textbf{1504} & 1504 & 0.18 \\
    I129 &   &   & 1698 & 1699 & 1697 & 0.12 & 3600 & 1735 & 889 & 48.76 & 3600 & \textbf{1698} & 1698 & 1.96 & \textbf{1698} & 1698 & 3.23 & \textbf{1698} & 1698 & 2.87 \\
    I130 &   &   & 1724.5 & 1730 & 1723.5 & 0.38 & 3600 & 1759 & 902.5 & 48.69 & 3600 & \textbf{1725} & 1725 & 0.08 & \textbf{1725} & 1725 & 0.26 & \textbf{1725} & 1725 & 0.35 \\
    \hline
    I131 &   &   & 1820.5 & 1825 & 1819 & 0.33 & 3600 & 1843 & 954.2 & 48.23 & 3600 & \textbf{1821} & 1821 & 22.47 & \textbf{1821} & 1821 & 23.73 & \textbf{1821} & 1821.3 & 33.17 \\
    I132 &   &   & 1980.5 & 1998 & 1979.5 & 0.93 & 3600 & 1841388 & 1038.1 & 99.94 & 3600 & \textbf{1981} & 1981.9 & 26.68 & \textbf{1981} & 1981.8 & 33.72 & \textbf{1981} & 1982 & 36.83 \\
    I133 &   &   & 1636 & 1637 & 1635 & 0.12 & 3600 & 1680 & 861 & 48.75 & 3600 & \textbf{1636} & 1636.6 & 25.43 & \textbf{1636} & 1636.9 & 23.33 & \textbf{1636} & 1636.8 & 45.90 \\
    I134 &   &   & 1724.5 & 1732 & 1723.5 & 0.49 & 3600 & 1749 & 906 & 48.20 & 3600 & \textbf{1725} & 1725.2 & 41.57 & \textbf{1725} & 1725.6 & 31.09 & \textbf{1725} & 1725.8 & 42.43 \\
    I135 & 50 & $\alpha_2$ & 1722 & 1739 & 1721 & 1.04 & 3600 & 1761 & 904 & 48.67 & 3600 & \textbf{1722} & 1722.8 & 44.52 & \textbf{1722} & 1723 & 38.90 & 1723 & 1723.1 & 32.88 \\
    I136 &   &   & 1619 & 1621 & 1618 & 0.19 & 3600 & 1644 & 850.4 & 48.27 & 3600 & 1620 & 1620 & 20.20 & \textbf{1619} & 1619.7 & 33.83 & 1620 & 1620.1 & 24.41 \\
    I137 &   &   & 1749 & 1761 & 1748 & 0.74 & 3600 & 1393159 & 918.5 & 99.93 & 3600 & \textbf{1749} & 1749.5 & 44.27 & \textbf{1749} & 1749.4 & 20.54 & \textbf{1749} & 1749.4 & 34.38 \\
    I138 &   &   & 1754.5 & 1768 & 1752.5 & 0.88 & 3600 & 1788 & 920 & 48.55 & 3600 & \textbf{1755} & 1755.1 & 31.69 & \textbf{1755} & 1755 & 13.08 & \textbf{1755} & 1755.2 & 34.82 \\
    I139 &   &   & 1462 & 1474 & 1461 & 0.88 & 3600 & 1513 & 771 & 49.04 & 3600 & \textbf{1462} & 1462.6 & 22.75 & \textbf{1462} & 1462.5 & 26.40 & \textbf{1462} & 1462.5 & 41.41 \\
    I140 &   &   & 1994 & 2001 & 1993 & 0.40 & 3600 & 2027 & 1044.4 & 48.48 & 3600 & \textbf{1994} & 1995.1 & 24.91 & \textbf{1994} & 1994.7 & 33.26 & \textbf{1994} & 1994.5 & 41.06 \\
    \hline
    I141 &   &   & 2031 & 2090 & 2030 & 2.87 & 3600 & 2094 & 1068.1 & 48.99 & 3600 & \textbf{2058} & 2068.1 & 25.90 & 2063 & 2070 & 48.70 & 2064 & 2069.6 & 61.96 \\
    I142 &   &   & 1929.5 & 1977 & 1928.5 & 2.45 & 3600 & 1777900 & 1015.2 & 99.94 & 3600 & 1955 & 1967.9 & 42.96 & \textbf{1953} & 1965 & 32.29 & 1958 & 1966 & 52.15 \\
    I143 &   &   & 2289.5 & 2389 & 2288 & 4.23 & 3600 & 2341 & 1200.3 & 48.73 & 3600 & \textbf{2324} & 2352.8 & 53.18 & 2328 & 2347.6 & 50.41 & 2344 & 2352.4 & 53.89 \\
    I144 &   &   & 2127.5 & 2214 & 2126 & 3.97 & 3600 & 2452 & 1118.3 & 54.39 & 3600 & 2159 & 2178.4 & 46.19 & \textbf{2146} & 2173.3 & 43.45 & 2173 & 2181.6 & 39.35 \\
    I145 & 50 & $\alpha_3$ & 2255 & 2322 & 2252.5 & 2.99 & 3600 & 2382 & 1182.3 & 50.37 & 3600 & \textbf{2281} & 2311.7 & 28.00 & 2296 & 2311.4 & 29.97 & 2297 & 2308.3 & 58.64 \\
    I146 &   &   & 1984 & 2034 & 1981.5 & 2.58 & 3600 & 2129 & 1041.4 & 51.08 & 3600 & \textbf{1997} & 2015 & 48.82 & 2003 & 2015.3 & 53.33 & 2008 & 2014.4 & 39.57 \\
    I147 &   &   & 2205 & 2302 & 2202.5 & 4.32 & 3600 & 2443 & 1156.9 & 52.64 & 3600 & \textbf{2242} & 2250.6 & 47.34 & 2245 & 2251.9 & 38.19 & 2244 & 2250.8 & 45.78 \\
    I148 &   &   & 2313.5 & 2375 & 2312.5 & 2.63 & 3600 & 2366 & 1214.4 & 48.67 & 3600 & 2349 & 2365.1 & 39.90 & \textbf{2347} & 2365.9 & 49.14 & 2353 & 2366.1 & 41.94 \\
    I149 &   &   & 2104 & 2176 & 2103 & 3.35 & 3600 & 2145 & 1105.7 & 48.45 & 3600 & \textbf{2147} & 2160.1 & 37.86 & 2149 & 2162.6 & 37.67 & \textbf{2147} & 2160.4 & 70.19 \\
    I150 &   &   & 2196 & 2301 & 2194.5 & 4.63 & 3600 & 2272 & 1152.3 & 49.28 & 3600 & 2247 & 2259.1 & 37.19 & 2243 & 2254.8 & 49.39 & \textbf{2235} & 2257.4 & 62.15 \\
\hline
\end{tabular}
}
\label{tab-6}
\end{table}

\begin{table}[!h]
\caption{Comparison of $CF^{+}$, GVNS~I, GVNS~II and GRASP for $n=100$.}
\centering
\vspace*{3mm}
\renewcommand{\tabcolsep}{1.5pt}
{\scriptsize
\renewcommand{\arraystretch}{0.85}
    \begin{tabular}{|c|c|c|c|cccc|ccc|ccc|ccc|}
    \hline
    \multicolumn{4}{|c|}{Instance} & \multicolumn{4}{c|}{$CF^{+}$} & \multicolumn{3}{c|}{GVNS I} & \multicolumn{3}{c|}{GVNS II} & \multicolumn{3}{c|}{GRASP} \\
\hline
    ID & $n$ & $\alpha$ & $LB_{T}$ & $UB_{CF^{+}}$ & $LB_{CF^{+}}$ & $Gap_{CF^{+}}(\%)$ & CPU & $C_{max}^{best}$ & $C_{max}^{avg}$ & CPU & $C_{max}^{best}$ & $C_{max}^{avg}$ & CPU & $C_{max}^{best}$ & $C_{max}^{avg}$ & CPU \\
    \hline
    I151 &   &   & 3296 & 3309 & 3295 & 0.42 & 3600 & \textbf{3296} & 3296 & 12.27 & \textbf{3296} & 3296 & 14.88 & \textbf{3296} & 3296 & 14.31 \\
    I152 &   &   & 3293.5 & 3325 & 3292.5 & 0.98 & 3600 & \textbf{3294} & 3294 & 3.69 & \textbf{3294} & 3294 & 6.05 & \textbf{3294} & 3294 & 8.22 \\
    I153 &   &   & 3349 & 3399 & 3348 & 1.50 & 3600 & \textbf{3349} & 3349 & 22.51 & \textbf{3349} & 3349 & 21.44 & \textbf{3349} & 3349 & 21.71 \\
    I154 &   &   & 3128 & 3133 & 3127 & 0.19 & 3600 & \textbf{3128} & 3128 & 11.03 & \textbf{3128} & 3128 & 13.00 & \textbf{3128} & 3128 & 13.96 \\
    I155 & 100 & $\alpha_1$ & 2840.5 & 2853 & 2839.5 & 0.47 & 3600 & \textbf{2841} & 2841 & 4.53 & \textbf{2841} & 2841 & 3.89 & \textbf{2841} & 2841 & 5.91 \\
    I156 &   &   & 3105.5 & 3140 & 3104.5 & 1.13 & 3600 & \textbf{3106} & 3106 & 7.56 & \textbf{3106} & 3106 & 5.39 & \textbf{3106} & 3106 & 5.82 \\
    I157 &   &   & 3044 & 3052 & 3043 & 0.29 & 3600 & \textbf{3044} & 3044 & 9.01 & \textbf{3044} & 3044 & 14.66 & \textbf{3044} & 3044 & 13.94 \\
    I158 &   &   & 3167 & 3235 & 3166 & 2.13 & 3600 & \textbf{3167} & 3167.1 & 18.28 & \textbf{3167} & 3167 & 21.30 & \textbf{3167} & 3167 & 11.03 \\
    I159 &   &   & 3126.5 & 3157 & 3125.5 & 1.00 & 3600 & \textbf{3127} & 3127 & 7.01 & \textbf{3127} & 3127 & 5.40 & \textbf{3127} & 3127 & 5.51 \\
    I160 &   &   & 3088 & 3132 & 3087 & 1.44 & 3600 & \textbf{3088} & 3088.1 & 19.97 & \textbf{3088} & 3088 & 20.61 & \textbf{3088} & 3088.1 & 30.69 \\
    \hline
    I161 &   &   & 3580.5 & 3728 & 3579.5 & 3.98 & 3600 & 3586 & 3590.8 & 54.01 & 3587 & 3591.8 & 40.44 & \textbf{3585} & 3591.4 & 48.55 \\
    I162 &   &   & 3639 & 3860 & 3638 & 5.75 & 3600 & \textbf{3644} & 3650.9 & 26.92 & 3646 & 3651 & 46.31 & 3645 & 3650.5 & 51.24 \\
    I163 &   &   & 3212.5 & 3285 & 3211.5 & 2.24 & 3600 & 3220 & 3222.9 & 34.46 & \textbf{3217} & 3222.2 & 59.64 & 3218 & 3223.8 & 33.83 \\
    I164 &   &   & 3372 & 3451 & 3371 & 2.32 & 3600 & 3377 & 3382 & 32.69 & \textbf{3375} & 3382.9 & 54.89 & 3381 & 3385.5 & 40.25 \\
    I165 & 100 & $\alpha_2$ & 3576.5 & 3656 & 3575.5 & 2.20 & 3600 & 3582 & 3588.1 & 37.54 & \textbf{3581} & 3586.9 & 48.74 & 3583 & 3587.5 & 64.07 \\
    I166 &   &   & 3355.5 & 3788 & 3354.5 & 11.44 & 3600 & \textbf{3358} & 3363.9 & 34.64 & 3362 & 3365.3 & 47.19 & 3362 & 3367.2 & 55.01 \\
    I167 &   &   & 3565 & 3666 & 3564 & 2.78 & 3600 & 3572 & 3576.1 & 38.93 & \textbf{3569} & 3575.3 & 36.02 & 3570 & 3577 & 33.46 \\
    I168 &   &   & 3344 & 3448 & 3343 & 3.05 & 3600 & 3351 & 3353.2 & 45.02 & \textbf{3346} & 3352.3 & 60.23 & 3349 & 3352.5 & 58.84 \\
    I169 &   &   & 3627 & 3749 & 3626 & 3.28 & 3600 & \textbf{3629} & 3636.4 & 47.02 & 3635 & 3637.3 & 31.12 & 3635 & 3640.5 & 42.71 \\
    I170 &   &   & 3270 & 3296 & 3269 & 0.82 & 3600 & \textbf{3272} & 3278.9 & 34.82 & 3275 & 3280.4 & 51.30 & 3274 & 3281 & 45.42 \\
    \hline
    I171 &   &   & 4461.5 & * & * & * & * & \textbf{4564} & 4650.7 & 23.31 & 4631 & 4670.7 & 61.91 & 4622 & 4678.2 & 54.00 \\
    I172 &   &   & 4337 & 5254 & 4335.5 & 17.48 & 3600 & \textbf{4460} & 4509.9 & 34.86 & 4508 & 4522.3 & 56.35 & 4498 & 4518.3 & 58.21 \\
    I173 &   &   & 3755.5 & * & * & * & * & \textbf{3855} & 3898.1 & 21.28 & 3871 & 3902 & 44.49 & 3861 & 3911.7 & 47.72 \\
    I174 &   &   & 4390 & 4531 & 4389 & 3.13 & 3600  & \textbf{4540} & 4590.5 & 48.07 & \textbf{4540} & 4583.4 & 46.22 & 4561 & 4590.1 & 59.40 \\
    I175 & 100 & $\alpha_3$ & 4144 & * & * & * & * & \textbf{4260} & 4314 & 20.84 & 4318 & 4340.9 & 40.58 & 4298 & 4342.1 & 50.20 \\
    I176 &   &   & 4608.5 & * & * & * & * & \textbf{4752} & 4801.3 & 29.87 & 4776 & 4812.6 & 49.47 & 4769 & 4815.6 & 39.64 \\
    I177 &   &   & 4411.5 & 5093 & 4410.5 & 13.40 & 3600 & \textbf{4570} & 4616.8 & 32.81 & 4625 & 4645.5 & 44.56 & 4601 & 4650.3 & 54.33 \\
    I178 &   &   & 4384.5 & 4771 & 4383.5 & 8.12 & 3600 & \textbf{4509} & 4565 & 34.10 & 4526 & 4562.2 & 38.27 & 4553 & 4576.6 & 52.03 \\
    I179 &   &   & 4152.5 & 5674 & 4151.5 & 26.83 & 3600 & \textbf{4270} & 4318.6 & 23.29 & 4316 & 4335.9 & 42.11 & 4294 & 4343.2 & 40.96 \\
    I180 &   &   & 3986 & 5105 & 3985 & 21.94 & 3600 & \textbf{4093} & 4159.2 & 49.02 & 4143 & 4164.1 & 42.89 & 4146 & 4177.5 & 60.32 \\
    \hline
\end{tabular}
}
\label{tab-7}
\end{table}

\begin{table}[!h]
\caption{Comparison of GVNS~I, GVNS~II and GRASP for $n=250$.}
\centering
\vspace*{3mm}
\renewcommand{\tabcolsep}{1.5pt}
{\scriptsize
\renewcommand{\arraystretch}{0.80}
\begin{tabular}{|c|c|c|c|ccc|ccc|ccc|}
\hline
\multicolumn{4}{|c|}{Instance} & \multicolumn{3}{c|}{GVNS~I} & \multicolumn{3}{c|}{GVNS~II} & \multicolumn{3}{c|}{GRASP} \\
\hline
ID & $n$ & $\alpha$ & $LB_{T}$ & $C_{max}^{best}$ & $C_{max}^{avg}$ & CPU & $C_{max}^{best}$ & $C_{max}^{avg}$ & CPU & $C_{max}^{best}$ & $C_{max}^{avg}$ & CPU \\
    \hline
    I181 &   &   & 7985 & \textbf{7988} & 7993.9 & 157.51 & \textbf{7988} & 7994.4 & 181.84 & 7991 & 7995.5 & 155.60 \\
    I182 &   &   & 8118 & \textbf{8120} & 8124.7 & 180.11 & 8124 & 8129.2 & 198.61 & 8124 & 8128.4 & 157.28 \\
    I183 &   &   & 8361.5 & \textbf{8364} & 8369.1 & 135.30 & 8366 & 8371.9 & 178.68 & 8370 & 8374.3 & 116.51 \\
    I184 &   &   & 7941.5 & \textbf{7943} & 7949.5 & 151.44 & 7947 & 7953.3 & 133.65 & 7946 & 7953.6 & 121.18 \\
    I185 & 250 & $\alpha_1$ & 7812 & \textbf{7812} & 7820.9 & 139.86 & 7813 & 7821.2 & 188.11 & 7822 & 7825.6 & 133.99 \\
    I186 &   &   & 7925 & 7930 & 7933.7 & 132.45 & \textbf{7929} & 7935.3 & 192.06 & 7933 & 7937 & 179.39 \\
    I187 &   &   & 8768 & \textbf{8769} & 8772.8 & 45.51 & 8773 & 8776.5 & 192.07 & 8774 & 8781.6 & 129.45 \\
    I188 &   &   & 8215.5 & \textbf{8218} & 8224.1 & 139.55 & 8220 & 8227.1 & 254.57 & 8222 & 8227.7 & 149.58 \\
    I189 &   &   & 8350.5 & \textbf{8352} & 8356.8 & 84.99 & 8353 & 8361.1 & 121.01 & 8359 & 8363.6 & 132.89 \\
    I190 &   &   & 7698.5 & \textbf{7704} & 7707.1 & 121.58 & 7709 & 7712 & 123.33 & 7708 & 7712.9 & 175.84 \\
    \hline
    I191 &   &   & 9315.5 & \textbf{9332} & 9365.4 & 50.05 & 9401 & 9415.8 & 172.26 & 9402 & 9421.5 & 204.80 \\
    I192 &   &   & 9120 & \textbf{9148} & 9209.8 & 191.30 & 9198 & 9221.2 & 198.00 & 9208 & 9228.6 & 167.77 \\
    I193 &   &   & 9611.5 & \textbf{9652} & 9699.1 & 124.61 & 9685 & 9707 & 209.73 & 9690 & 9723.9 & 186.50 \\
    I194 &   &   & 8972.5 & \textbf{9001} & 9055.8 & 135.90 & 9056 & 9077.8 & 193.12 & 9065 & 9081.3 & 154.83 \\
    I195 & 250 & $\alpha_2$ & 9138.5 & \textbf{9174} & 9234.3 & 168.61 & 9195 & 9227.7 & 220.72 & 9195 & 9246.8 & 186.97 \\
    I196 &   &   & 9239.5 & \textbf{9257} & 9305.2 & 48.81 & 9316 & 9338.9 & 167.96 & 9335 & 9351.7 & 103.83 \\
    I197 &   &   & 9230 & \textbf{9278} & 9320.8 & 140.50 & 9298 & 9330.8 & 146.42 & 9324 & 9341.9 & 148.61 \\
    I198 &   &   & 9247.5 & \textbf{9258} & 9307.8 & 57.58 & 9320 & 9345.2 & 172.22 & 9305 & 9347.1 & 193.85 \\
    I199 &   &   & 9178 & \textbf{9212} & 9265 & 118.21 & 9262 & 9278.6 & 200.98 & 9261 & 9283.3 & 207.56 \\
    I200 &   &   & 9162 & 9223 & 9244.9 & 173.41 & 9238 & 9260.4 & 208.11 & \textbf{9209} & 9263.4 & 163.48 \\
    \hline
    I201 &   &   & 11626.5 & \textbf{12019} & 12281.6 & 96.05 & 12266 & 12337.9 & 149.72 & 12323 & 12374.2 & 187.44 \\
    I202 &   &   & 11171.5 & \textbf{11491} & 11756.2 & 45.78 & 11888 & 11930.1 & 178.71 & 11869 & 11944.1 & 132.79 \\
    I203 &   &   & 11095 & \textbf{11417} & 11667 & 59.15 & 11702 & 11848.7 & 234.54 & 11751 & 11853.1 & 144.67 \\
    I204 &   &   & 11264.5 & 11874 & 11958.6 & 162.26 & \textbf{11869} & 11971.3 & 253.40 & 11977 & 12033.8 & 80.53 \\
    I205 & 250 & $\alpha_3$ & 11385 & \textbf{11965} & 12095.2 & 161.03 & 11996 & 12133 & 220.39 & 12105 & 12183.2 & 168.57 \\
    I206 &   &   & 11298.5 & \textbf{11640} & 11814.5 & 1.58 & 11971 & 12045.4 & 179.96 & 11972 & 12063.9 & 178.81 \\
    I207 &   &   & 11368 & \textbf{11727} & 11966.7 & 82.51 & 12030 & 12111.2 & 167.23 & 12072 & 12149.9 & 85.02 \\
    I208 &   &   & 11262.5 & \textbf{11562} & 11826.1 & 46.11 & 11919 & 12001 & 132.54 & 12018 & 12062.1 & 159.30 \\
    I209 &   &   & 11443.5 & \textbf{11962} & 12160 & 101.88 & 12182 & 12235.7 & 224.77 & 12196 & 12260.2 & 173.39 \\
    I210 &   &   & 11460 & \textbf{11922} & 12144.9 & 136.39 & 12047 & 12176.9 & 174.42 & 12147 & 12245.4 & 142.54 \\
    \hline
\end{tabular}
}
\label{tab-8}
\end{table}

\begin{table}[!h]
\caption{Comparison of GVNS~I, GVNS~II and GRASP for $n=500$.}
\centering
\vspace*{3mm}
\renewcommand{\tabcolsep}{1.5pt}
{\scriptsize
\renewcommand{\arraystretch}{0.80}
\begin{tabular}{|c|c|c|c|ccc|ccc|ccc|}
\hline
\multicolumn{4}{|c|}{Instance} & \multicolumn{3}{c|}{GVNS~I} & \multicolumn{3}{c|}{GVNS~II} & \multicolumn{3}{c|}{GRASP} \\
    \hline
    ID & $n$ & $\alpha$ & $LB_{T}$ & $C_{max}^{best}$ & $C_{max}^{avg}$ & CPU & $C_{max}^{best}$ & $C_{max}^{avg}$ & CPU & $C_{max}^{best}$ & $C_{max}^{avg}$ & CPU \\
    \hline
    I211 &   &   & 16183 & \textbf{16193} & 16221.3 & 95.35 & 16215 & 16239.5 & 219.08 & 16212 & 16231.7 & 229.66 \\
    I212 &   &   & 16168.5 & \textbf{16169} & 16190.7 & 44.93 & 16206 & 16223.2 & 269.14 & 16208 & 16224.4 & 217.81 \\
    I213 &   &   & 16052 & \textbf{16053} & 16061.5 & 0.00 & 16094 & 16110.3 & 140.95 & 16093 & 16104.6 & 163.79 \\
    I214 &   &   & 16039 & \textbf{16040} & 16059.9 & 36.41 & 16068 & 16091.9 & 215.93 & 16072 & 16084.6 & 176.23 \\
    I215 & 500 & $\alpha_1$ & 16386.5 & \textbf{16387} & 16408.9 & 49.72 & 16424 & 16450 & 240.40 & 16421 & 16436.7 & 210.35 \\
    I216 &   &   & 16549 & \textbf{16551} & 16562 & 0.00 & 16586 & 16605.4 & 214.22 & 16570 & 16599.7 & 212.29 \\
    I217 &   &   & 16123.5 & \textbf{16128} & 16154.1 & 106.97 & 16154 & 16170.5 & 280.12 & 16154 & 16171.5 & 215.64 \\
    I218 &   &   & 16501.5 & \textbf{16503} & 16514.1 & 58.74 & 16541 & 16557.5 & 226.26 & 16542 & 16550.6 & 211.71 \\
    I219 &   &   & 16476 & \textbf{16478} & 16505.6 & 167.26 & 16526 & 16541.1 & 149.62 & 16513 & 16524.7 & 220.66 \\
    I220 &   &   & 16395 & \textbf{16399} & 16436.5 & 93.41 & 16430 & 16459.6 & 279.06 & 16428 & 16444.8 & 212.82 \\
    \hline
    I221 &   &   & 18336.5 & \textbf{18391} & 18521.6 & 26.58 & 18614 & 18659.6 & 207.13 & 18591 & 18649.2 & 207.35 \\
    I222 &   &   & 18452 & \textbf{18479} & 18719.9 & 168.25 & 18737 & 18794 & 205.39 & 18714 & 18805.3 & 174.90 \\
    I223 &   &   & 18351 & \textbf{18421} & 18554.5 & 108.90 & 18605 & 18658.8 & 188.27 & 18596 & 18662.1 & 268.61 \\
    I224 &   &   & 18287.5 & \textbf{18352} & 18527.3 & 238.02 & 18541 & 18627.4 & 243.43 & 18468 & 18589.7 & 224.05 \\
    I225 & 500 & $\alpha_2$ & 18507 & \textbf{18565} & 18735.6 & 105.35 & 18737 & 18820.3 & 196.79 & 18780 & 18825 & 176.34 \\
    I226 &   &   & 17979.5 & \textbf{18037} & 18202.2 & 145.61 & 18221 & 18280.20 & 227.92 & 18224 & 18286.3 & 297.78 \\
    I227 &   &   & 18974 & \textbf{19065} & 19235.3 & 236.62 & 19211 & 19301.10 & 232.63 & 19227 & 19285.3 & 193.06 \\
    I228 &   &   & 18554.5 & \textbf{18613} & 18733.9 & 121.78 & 18798 & 18857.00 & 189.89 & 18801 & 18848.9 & 171.66 \\
    I229 &   &   & 18235 & \textbf{18273} & 18477.2 & 122.11 & 18467 & 18521.40 & 216.95 & 18462 & 18550.4 & 206.81 \\
    I230 &   &   & 18619 & \textbf{18676} & 18739.6 & 0.00 & 18891 & 18932.80 & 192.21 & 18899 & 18925.6 & 242.40 \\
    \hline
    I231 &   &   & 23731.5 & 25317 & 25496.4 & 129.52 & \textbf{25077} & 25615.40 & 253.68 & 25535 & 25721.4 & 204.73 \\
    I232 &   &   & 21490.5 & \textbf{22093} & 22885.5 & 95.73 & 23048 & 23265.20 & 245.40 & 22862 & 23160.9 & 218.52 \\
    I233 &   &   & 22338 & \textbf{23166} & 24063.5 & 172.02 & 24007 & 24169.80 & 302.08 & 23958 & 24143.5 & 278.67 \\
    I234 &   &   & 23310.5 & \textbf{24229} & 24902.1 & 160.24 & 24880 & 25142.60 & 275.73 & 24970 & 25184.4 & 260.70 \\
    I235 & 500 & $\alpha_3$ & 23104 & \textbf{23905} & 24303.2 & 13.52 & 24907 & 25162.40 & 278.16 & 25034 & 25133.4 & 219.44 \\
    I236 &   &   & 22535.5 & \textbf{23253} & 24055.7 & 166.31 & 24199 & 24376.10 & 193.90 & 24167 & 24312 & 196.42 \\
    I237 &   &   & 21615.5 & \textbf{22336} & 22953.3 & 141.24 & 23132 & 23298.40 & 174.15 & 23264 & 23450.3 & 232.60 \\
    I238 &   &   & 22629.5 & \textbf{23454} & 23969.7 & 47.90 & 24301 & 24547.60 & 240.40 & 24309 & 24447.9 & 271.19 \\
    I239 &   &   & 21475.5 & \textbf{22011} & 22522.5 & 74.06 & 22923 & 23110.90 & 209.81 & 23067 & 23236.9 & 210.89 \\
    I240 &   &   & 22855.5 & \textbf{23611} & 24153.2 & 64.92 & 24519 & 24670.40 & 256.98 & 24396 & 24781.9 & 239.34 \\
    \hline
\end{tabular}
}
\label{tab-9}
\end{table}


\begin{sidewaystable}
\caption{Comparison of $CF^{+}$, $TIF^{+}$, $MIP_{2S}^{+}$  with $MIP_{2S}$ by \cite{benmansour2021scheduling} for $n=10$.}
\centering
\vspace*{3mm}
\renewcommand{\tabcolsep}{0.85pt}
{\scriptsize
\renewcommand{\arraystretch}{0.85}
\begin{tabular}{|ccc|cccc|cccc|cccc|cccc|c|}
\hline
      & \multicolumn{1}{c}{Instance} &   & \multicolumn{8}{c|}{2 servers} & \multicolumn{8}{c|}{1 server} &  \\
\cline{4-19}      & \multicolumn{1}{c}{} &   & \multicolumn{4}{c|}{$MIP_{2S}$} & \multicolumn{4}{c|}{$MIP_{2S}^{+}$} & \multicolumn{4}{c|}{$CF^{+}$} & \multicolumn{4}{c|}{$TIF^{+}$} & $Gap_{MI}(\%)$ \\
\cline{1-19}    \multicolumn{1}{|c|}{ID} & \multicolumn{1}{c|}{$n$} & $\alpha$ & $UB_{MIP_{2S}}$ & $LB_{MIP_{2S}}$ & $Gap_{MIP_{2S}}(\%)$ & CPU & $UB_{MIP_{2S}^{+}}$ & $LB_{MIP_{2S}^{+}}$ & $Gap_{MIP_{2S}^{+}}(\%)$ & CPU & $UB_{CF^{+}}$& $LB_{CF^{+}}$ & $Gap_{CF^{+}}(\%)$ & CPU & $UB_{TIF^{+}}$ & $LB_{TIF^{+}}$ & $Gap_{TIF^{+}}(\%)$ & CPU &  \\
\hline
    \multicolumn{1}{|c|}{I31} & \multicolumn{1}{c|}{} &  & 276 & 276 & 0 & 221.90 & 276 & 276 & 0 & 0.39 & 277 & 277 & 0 & 0.68 & 277 & 277 & 0 & 4.22 & 0.36 \\
    \multicolumn{1}{|c|}{I32} & \multicolumn{1}{c|}{} &   & 241 & 232 & 3.73 & 3600 & 241 & 241 & 0 & 0.56 & 242 & 242 & 0 & 0.72 & 242 & 242 & 0 & 5.05 & 0.41 \\
    \multicolumn{1}{|c|}{I33} & \multicolumn{1}{c|}{} &   & 309 & 301 & 2.59 & 3600 & 309 & 309 & 0 & 0.26 & 310 & 310 & 0 & 9.83 & 310 & 310 & 0 & 7.94 & 0.32 \\
    \multicolumn{1}{|c|}{I34} & \multicolumn{1}{c|}{} &   & 298 & 294 & 1.34 & 3600 & 298 & 298 & 0 & 0.37 & 299 & 299 & 0 & 1.87 & 299 & 299 & 0 & 4.86 & 0.34 \\
    \multicolumn{1}{|c|}{I35} & \multicolumn{1}{c|}{10} &  $\alpha_{1}$ & 312 & 312 & 0 & 2349.40 & 312 & 312 & 0 & 0.27 & 313 & 313 & 0 & 1.15 & 313 & 313 & 0 & 6.52 & 0.32 \\
    \multicolumn{1}{|c|}{I36} & \multicolumn{1}{c|}{} &   & 379 & 368 & 2.90 & 3600 & 379 & 379 & 0 & 0.26 & 380 & 380 & 0 & 17.25 & 380 & 380 & 0 & 8.27 & 0.26 \\
    \multicolumn{1}{|c|}{I37} & \multicolumn{1}{c|}{} &   & 310 & 310 & 0 & 1554.82 & 310 & 310 & 0 & 0.31 & 311 & 311 & 0 & 1.07 & 311 & 311 & 0 & 8.64 & 0.32 \\
    \multicolumn{1}{|c|}{I38} & \multicolumn{1}{c|}{} &   & 292 & 292 & 0 & 2414.57 & 292 & 292 & 0 & 0.24 & 293 & 293 & 0 & 1.67 & 293 & 293 & 0 & 10.13 & 0.34 \\
    \multicolumn{1}{|c|}{I39} & \multicolumn{1}{c|}{} &   & 320 & 320 & 0 & 1029.12 & 320 & 320 & 0 & 0.28 & 321 & 321 & 0 & 0.68 & 321 & 321 & 0 & 9.69 & 0.31 \\
    \multicolumn{1}{|c|}{I40} & \multicolumn{1}{c|}{} &   & 192 & 179 & 6.77 & 3600 & 192 & 192 & 0 & 0.23 & 193 & 193 & 0 & 30.94 & 193 & 193 & 0 & 2.80 & 0.52 \\
        \hline
    \multicolumn{1}{|c|}{I41} & \multicolumn{1}{c|}{} &  & 414 & 414 & 0 & 641.43 & 414 & 414 & 0 & 0.22 & 416 & 416 & 0 & 44.25 & 416 & 416 & 0 & 38.63 & 0.48 \\
    \multicolumn{1}{|c|}{I42} & \multicolumn{1}{c|}{} &   & 444 & 434 & 2.25 & 3600 & 444 & 444 & 0 & 0.31 & 448 & 448 & 0 & 14.01 & 448 & 448 & 0 & 59.41 & 0.90 \\
    \multicolumn{1}{|c|}{I43} & \multicolumn{1}{c|}{} &   & 309 & 309 & 0 & 1004.36 & 309 & 309 & 0 & 0.35 & 311 & 311 & 0 & 9.02 & 311 & 311 & 0 & 8.27 & 0.65 \\
    \multicolumn{1}{|c|}{I44} & \multicolumn{1}{c|}{} &   & 226 & 226 & 0 & 81.07 & 226 & 226 & 0 & 0.31 & 227 & 227 & 0 & 0.53 & 227 & 227 & 0 & 6.37 & 0.44 \\
    \multicolumn{1}{|c|}{I45} & \multicolumn{1}{c|}{10} & $\alpha_{2}$ & 346 & 346 & 0 & 165.86 & 346 & 346 & 0 & 0.57 & 347 & 347 & 0 & 6.84 & 347 & 347 & 0 & 14.49 & 0.29 \\
    \multicolumn{1}{|c|}{I46} & \multicolumn{1}{c|}{} &   & 221 & 221 & 0.01 & 567.00 & 221 & 221 & 0 & 0.33 & 222 & 222 & 0 & 1.98 & 222 & 222 & 0 & 3.90 & 0.45 \\
    \multicolumn{1}{|c|}{I47} & \multicolumn{1}{c|}{} &   & 314 & 311 & 0.96 & 3600 & 314 & 314 & 0 & 0.38 & 317 & 317 & 0 & 298.73 & 317 & 317 & 0 & 7.43 & 0.96 \\
    \multicolumn{1}{|c|}{I48} & \multicolumn{1}{c|}{} &   & 280 & 280 & 0.01 & 712.92 & 280 & 280 & 0 & 0.33 & 281 & 281 & 0 & 1.43 & 281 & 281 & 0 & 9.47 & 0.36 \\
    \multicolumn{1}{|c|}{I49} & \multicolumn{1}{c|}{} &   & 272 & 271 & 0.37 & 3600 & 272 & 272 & 0 & 0.42 & 273 & 273 & 0 & 12.54 & 273 & 273 & 0 & 4.96 & 0.37 \\
    \multicolumn{1}{|c|}{I50} & \multicolumn{1}{c|}{} &   & 355 & 355 & 0.01 & 1935.59 & 355 & 355 & 0 & 0.41 & 357 & 357 & 0 & 11.42 & 357 & 357 & 0 & 15.36 & 0.56 \\
    \hline
    \multicolumn{1}{|c|}{I51} & \multicolumn{1}{c|}{} &  & 477 & 477 & 0 & 840.29 & 477 & 477 & 0 & 0.43 & 479 & 479 & 0 & 5.49 & 479 & 479 & 0 & 19.16 & 0.42 \\
    \multicolumn{1}{|c|}{I52} & \multicolumn{1}{c|}{} &   & 313 & 300 & 4.15 & 3600 & 313 & 313 & 0 & 0.26 & 316 & 316 & 0 & 21.08 & 316 & 316 & 0 & 16.94 & 0.96 \\
    \multicolumn{1}{|c|}{I53} & \multicolumn{1}{c|}{} &   & 515 & 514 & 0.19 & 3600 & 515 & 515 & 0 & 0.30 & 519 & 519 & 0 & 8.02 & 519 & 519 & 0 & 30.66 & 0.78 \\
    \multicolumn{1}{|c|}{I54} & \multicolumn{1}{c|}{} &   & 451 & 434 & 3.77 & 3600 & 451 & 451 & 0 & 0.25 & 458 & 458 & 0 & 17.57 & 458 & 458 & 0 & 43.69 & 1.55 \\
    \multicolumn{1}{|c|}{I55} & \multicolumn{1}{c|}{10} &  $\alpha_{3}$ & 410 & 410 & 0 & 439.66 & 410 & 410 & 0 & 0.28 & 413 & 413 & 0 & 5.64 & 413 & 413 & 0 & 33.46 & 0.73 \\
    \multicolumn{1}{|c|}{I56} & \multicolumn{1}{c|}{} &   & 345 & 338 & 2.03 & 3600 & 345 & 345 & 0 & 0.31 & 349 & 349 & 0 & 7.12 & 349 & 349 & 0 & 21.73 & 1.16 \\
    \multicolumn{1}{|c|}{I57} & \multicolumn{1}{c|}{} &   & 356 & 356 & 0.00 & 588.64 & 356 & 356 & 0 & 0.38 & 358 & 358 & 0 & 7.30 & 358 & 358 & 0 & 25.91 & 0.56 \\
    \multicolumn{1}{|c|}{I58} & \multicolumn{1}{c|}{} &   & 481 & 481 & 0 & 91.85 & 481 & 481 & 0 & 0.27 & 487 & 487 & 0 & 7.68 & 487 & 487 & 0 & 67.40 & 1.25 \\
    \multicolumn{1}{|c|}{I59} & \multicolumn{1}{c|}{} &   & 518 & 518 & 0.01 & 3264.02 & 518 & 518 & 0 & 0.41 & 523 & 523 & 0 & 13.47 & 523 & 523 & 0 & 92.01 & 0.97 \\
    \multicolumn{1}{|c|}{I60} & \multicolumn{1}{c|}{} &   & 438 & 395 & 9.82 & 3601 & 438 & 438 & 0 & 0.30 & 444 & 444 & 0 & 13.58 & 444 & 444 & 0 & 55.66 & 1.37 \\
    \hline
      & Avg. &   & 347.13 & 342.47 & 1.36 & 2156.77 & 347.13 & 347.13 & 0 & 0.33 & 349.47 & 349.47 & 0 & 19.12 & 349.47 & 349.47 & 0 & 21.43 & 0.62 \\
\hline
\end{tabular}
}
\label{tabM2}
\end{sidewaystable}

\begin{sidewaystable}
\caption{Comparison of $CF^{+}$, $TIF^{+}$, $MIP_{2S}^{+}$  with $MIP_{2S}$ by \cite{benmansour2021scheduling} for $n=12$.}
\centering
\vspace*{3mm}
\renewcommand{\tabcolsep}{0.85pt}
{\scriptsize
\renewcommand{\arraystretch}{0.85}
\begin{tabular}{|ccc|cccc|cccc|cccc|cccc|c|}
\hline
      & \multicolumn{1}{c}{Instance} &   & \multicolumn{8}{c|}{2 servers} & \multicolumn{8}{c|}{1 server} &  \\
\cline{4-19}      & \multicolumn{1}{c}{} &   & \multicolumn{4}{c|}{$MIP_{2S}$} & \multicolumn{4}{c|}{$MIP_{2S}^{+}$} & \multicolumn{4}{c|}{$CF^{+}$} & \multicolumn{4}{c|}{$TIF^{+}$} & $Gap_{MI}(\%)$ \\
\cline{1-19}    \multicolumn{1}{|c|}{ID} & \multicolumn{1}{c|}{$n$} & $\alpha$ & $UB_{MIP_{2S}}$ & $LB_{MIP_{2S}}$ & $Gap_{MIP_{2S}}(\%)$ & CPU & $UB_{MIP_{2S}^{+}}$ & $LB_{MIP_{2S}^{+}}$ & $Gap_{MIP_{2S}^{+}}(\%)$ & CPU & $UB_{CF^{+}}$& $LB_{CF^{+}}$ & $Gap_{CF^{+}}(\%)$ & CPU & $UB_{TIF^{+}}$ & $LB_{TIF^{+}}$ & $Gap_{TIF^{+}}(\%)$ & CPU &  \\
\hline
    \multicolumn{1}{|c|}{I61} & \multicolumn{1}{r|}{} &   & 402 & 318 & 20.90 & 3600 & 402 & 402 & 0 & 0.21 & 403 & 403 & 0 & 709.72 & 403 & 403 & 0 & 21.77 & 0.25 \\
    \multicolumn{1}{|c|}{I62} & \multicolumn{1}{c|}{} &   & 337 & 253.1 & 24.89 & 3600 & 337 & 337 & 0 & 0.28 & 338 & 338 & 0 & 2780.13 & 338 & 338 & 0 & 13.47 & 0.30 \\
    \multicolumn{1}{|c|}{I63} & \multicolumn{1}{c|}{} &   & 338 & 240.2 & 28.93 & 3600 & 338 & 338 & 0 & 0.40 & 339 & 339 & 0 & 1327.14 & 339 & 339 & 0 & 6.65 & 0.30 \\
    \multicolumn{1}{|c|}{I64} & \multicolumn{1}{c|}{} &   & 329 & 273 & 17.02 & 3600 & 329 & 329 & 0 & 0.40 & 330 & 329 & 0.30 & 3600 & 330 & 330 & 0 & 9.11 & 0.30 \\
    \multicolumn{1}{|c|}{I65} & \multicolumn{1}{c|}{12} & \multicolumn{1}{c|}{$\alpha_1$} & 391 & 278.7 & 28.71 & 3600 & 391 & 391 & 0 & 0.49 & 393 & 393 & 0 & 1318.60 & 393 & 393 & 0 & 17.89 & 0.51 \\
    \multicolumn{1}{|c|}{I66} & \multicolumn{1}{c|}{} &   & 219 & 135 & 38.33 & 3600 & 219 & 219 & 0 & 0.44 & 220 & 220 & 0 & 3101.09 & 220 & 220 & 0 & 4.22 & 0.46 \\
    \multicolumn{1}{|c|}{I67} & \multicolumn{1}{c|}{} &   & 351 & 325 & 7.41 & 3600 & 351 & 351 & 0 & 0.27 & 352 & 351 & 0.28 & 3600 & 352 & 352 & 0 & 7.46 & 0.28 \\
    \multicolumn{1}{|c|}{I68} & \multicolumn{1}{c|}{} &   & 382 & 274.4 & 28.16 & 3600 & 382 & 382 & 0 & 0.27 & 383 & 383 & 0 & 2033.21 & 383 & 383 & 0 & 12.57 & 0.26 \\
    \multicolumn{1}{|c|}{I69} & \multicolumn{1}{c|}{} &   & 367 & 244.8 & 33.31 & 3600 & 367 & 367 & 0 & 0.46 & 369 & 369 & 0 & 811.76 & 369 & 369 & 0 & 10.42 & 0.54 \\
    \multicolumn{1}{|c|}{I70} & \multicolumn{1}{c|}{} &   & 371 & 262 & 29.38 & 3600 & 371 & 371 & 0 & 0.35 & 372 & 372 & 0 & 1107.71 & 372 & 372 & 0 & 20.40 & 0.27 \\
    \hline
    \multicolumn{1}{|c|}{I71} & \multicolumn{1}{c|}{} &   & 472 & 324.9 & 31.17 & 3600 & 472 & 472 & 0 & 0.32 & 475 & 475 & 0 & 1933.03 & 475 & 475 & 0 & 44.77 & 0.64 \\
    \multicolumn{1}{|c|}{I72} & \multicolumn{1}{c|}{} &   & 444 & 304 & 31.53 & 3600 & 444 & 444 & 0 & 0.37 & 446 & 446 & 0 & 1062.03 & 446 & 446 & 0 & 36.42 & 0.45 \\
    \multicolumn{1}{|c|}{I73} & \multicolumn{1}{c|}{} &   & 457 & 299.5 & 34.46 & 3600 & 457 & 457 & 0 & 0.44 & 459 & 459 & 0 & 1393.99 & 459 & 459 & 0 & 47.07 & 0.44 \\
    \multicolumn{1}{|c|}{I74} & \multicolumn{1}{c|}{} &   & 379 & 271.2 & 28.44 & 3600 & 379 & 379 & 0 & 0.54 & 382 & 382 & 0 & 1687.02 & 382 & 382 & 0 & 18.44 & 0.79 \\
    \multicolumn{1}{|c|}{I75} & \multicolumn{1}{c|}{12} & \multicolumn{1}{c|}{$\alpha2$} & 351 & 284.4 & 18.97 & 3600 & 351 & 351 & 0 & 0.33 & 353 & 353 & 0 & 1490.81 & 353 & 353 & 0 & 21.96 & 0.57 \\
    \multicolumn{1}{|c|}{I76} & \multicolumn{1}{c|}{} &   & 496 & 319.5 & 35.58 & 3600 & 496 & 496 & 0 & 0.39 & 501 & 500 & 0.20 & 3600 & 501 & 501 & 0 & 64.85 & 1.01 \\
    \multicolumn{1}{|c|}{I77} & \multicolumn{1}{c|}{} &   & 362 & 300 & 17.13 & 3600 & 362 & 362 & 0 & 0.31 & 364 & 364 & 0 & 1915.16 & 364 & 364 & 0 & 14.92 & 0.55 \\
    \multicolumn{1}{|c|}{I78} & \multicolumn{1}{c|}{} &   & 543 & 327 & 39.77 & 3600 & 543 & 543 & 0 & 0.40 & 547 & 547 & 0 & 2810.76 & 547 & 547 & 0 & 69.65 & 0.74 \\
    \multicolumn{1}{|c|}{I79} & \multicolumn{1}{c|}{} &   & 584 & 369 & 36.82 & 3600 & 584 & 584 & 0 & 0.40 & 590 & 589 & 0.17 & 3600 & 590 & 590 & 0 & 105.08 & 1.03 \\
    \multicolumn{1}{|c|}{I80} & \multicolumn{1}{c|}{} &   & 400 & 332 & 17.00 & 3600 & 400 & 400 & 0 & 0.26 & 402 & 402 & 0 & 1037.85 & 402 & 402 & 0 & 24.29 & 0.50 \\
    \hline
    \multicolumn{1}{|c|}{I81} & \multicolumn{1}{c|}{} &   & 512 & 306.2 & 40.19 & 3600 & 512 & 512 & 0 & 0.42 & 519 & 519 & 0 & 1848.72 & 519 & 519 & 0 & 150.53 & 1.37 \\
    \multicolumn{1}{|c|}{I82} & \multicolumn{1}{c|}{} &   & 508 & 398.8 & 21.50 & 3600 & 508 & 508 & 0 & 0.56 & 512 & 512 & 0 & 459.09 & 512 & 512 & 0 & 196.29 & 0.79 \\
    \multicolumn{1}{|c|}{I83} & \multicolumn{1}{c|}{} &   & 587 & 414 & 29.47 & 3600 & 587 & 587 & 0 & 0.30 & 590 & 589.9 & 0 & 1538.58 & 590 & 590 & 0 & 171.30 & 0.51 \\
    \multicolumn{1}{|c|}{I84} & \multicolumn{1}{c|}{} &   & 680 & 395 & 41.91 & 3600 & 680 & 680 & 0 & 0.42 & 686 & 686 & 0 & 1020.65 & 686 & 686 & 0 & 213.73 & 0.88 \\
    \multicolumn{1}{|c|}{I85} & \multicolumn{1}{c|}{12} & \multicolumn{1}{c|}{$\alpha_3$} & 483 & 399.5 & 17.29 & 3600 & 483 & 483 & 0 & 0.28 & 486 & 486 & 0 & 219.78 & 486 & 486 & 0 & 99.75 & 0.62 \\
    \multicolumn{1}{|c|}{I86} & \multicolumn{1}{c|}{} &   & 566 & 388.3 & 31.40 & 3600 & 566 & 566 & 0 & 0.35 & 570 & 569 & 0.18 & 3600 & 570 & 570 & 0 & 116.83 & 0.71 \\
    \multicolumn{1}{|c|}{I87} & \multicolumn{1}{c|}{} &   & 625 & 397.5 & 36.39 & 3600 & 625 & 625 & 0 & 0.40 & 632 & 626 & 0.95 & 3600 & 632 & 632 & 0 & 207.81 & 1.12 \\
    \multicolumn{1}{|c|}{I88} & \multicolumn{1}{c|}{} &   & 527 & 372.8 & 29.26 & 3600 & 527 & 527 & 0 & 0.28 & 533 & 528 & 0.94 & 3600 & 533 & 533 & 0 & 148.40 & 1.14 \\
    \multicolumn{1}{|c|}{I89} & \multicolumn{1}{c|}{} &   & 429 & 270.2 & 37.03 & 3600 & 429 & 429 & 0 & 0.40 & 432 & 432 & 0 & 949.44 & 432 & 432 & 0 & 46.51 & 0.70 \\
    \multicolumn{1}{|c|}{I90} & \multicolumn{1}{c|}{} &   & 491 & 395 & 19.55 & 3600 & 491 & 491 & 0 & 0.30 & 493 & 493 & 0 & 217.03 & 493 & 493 & 0 & 147.99 & 0.41 \\
    \hline
      & Avg. &   & 446.1 & 315.77 & 28.40 & 3600 & 446.1 & 446.1 & 0.00 & 0.37 & 449.03 & 448.50 & 0.10 & 1932.47 & 449.03 & 449.03 & 0 & 69.02 & 0.61 \\
\hline
\end{tabular}
}
\label{tabM3}
\end{sidewaystable}

\begin{sidewaystable}
\caption{Comparison of $CF^{+}$, $TIF^{+}$, $MIP_{2S}^{+}$  with $MIP_{2S}$ by \cite{benmansour2021scheduling} for $n=25$.}
\centering
\vspace*{3mm}
\renewcommand{\tabcolsep}{0.85pt}
{\scriptsize
\renewcommand{\arraystretch}{0.85}
\begin{tabular}{|c|c|c|cccc|cccc|cccc|cccc|}
\hline
\multicolumn{1}{|r}{} & \multicolumn{1}{c}{Instance} &   & \multicolumn{8}{c|}{2 servers} & \multicolumn{8}{c|}{1 server} \\
\cline{4-19}    \multicolumn{1}{|r}{} & \multicolumn{1}{r}{} &   & \multicolumn{4}{c|}{$MIP_{2S}$} & \multicolumn{4}{c|}{$MIP_{2S}^{+}$} & \multicolumn{4}{c|}{$CF^{+}$} & \multicolumn{4}{c|}{$TIF^{+}$} \\
\hline
    ID & $n$ & $\alpha$ & $UB_{MIP_{2S}}$ & $LB_{MIP_{2S}}$ & $Gap_{MIP_{2S}}(\%)$ & CPU & $UB_{MIP_{2S}^{+}}$ & $LB_{MIP_{2S}^{+}}$ & $Gap_{MIP_{2S}^{+}}(\%)$ & CPU & $UB_{CF^{+}}$ & $LB_{CF^{+}}$ & $Gap_{CF^{+}}(\%)$ & CPU & $UB_{TIF^{+}}$ & $LB_{TIF^{+}}$ & $Gap_{TIF^{+}}(\%)$ & CPU \\
    \hline
    I91 &   &   & 800 & 246 & 69.25 & 3600 & 800 & 800 & 0 & 277.00 & 801 & 799.5 & 0.19 & 3600 & 801 & 549 & 31.47 & 3600 \\
    I92 &   &   & 762 & 309 & 59.45 & 3600 & 762 & 762 & 0 & 0.68 & 763 & 762 & 0.13 & 3600 & 763 & 763 & 0 & 1782.26 \\
    I93 &   &   & 860 & 303.5 & 64.71 & 3600 & 860 & 860 & 0 & 253.33 & 861 & 859.5 & 0.17 & 3600 & 861 & 861 & 0 & 1454.95 \\
    I94 &   &   & 880 & 300 & 65.91 & 3600 & 880 & 880 & 0 & 1.01 & 882 & 880 & 0.23 & 3600 & 881 & 677 & 23.16 & 3600 \\
    I95 & 25 & $\alpha_1$ & 815 & 280.5 & 65.58 & 3600 & 815 & 815 & 0 & 0.86 & 816 & 815 & 0.12 & 3600 & 816 & 816 & 0 & 970.43 \\
    I96 &   &   & 704 & 240.6 & 65.83 & 3600 & 704 & 704 & 0 & 0.49 & 705 & 704 & 0.14 & 3600 & 705 & 545 & 22.70 & 3600 \\
    I97 &   &   & 710 & 275 & 61.27 & 3600 & 710 & 710 & 0 & 1.71 & 711 & 710 & 0.14 & 3600 & 711 & 711 & 0 & 1189.41 \\
    I98 &   &   & 899 & 275 & 69.41 & 3600 & 899 & 899 & 0 & 1193.36 & 900 & 898.5 & 0.17 & 3600 & 901 & 706.7 & 21.56 & 3600 \\
    I99 &   &   & 789 & 294 & 62.74 & 3600 & 789 & 789 & 0 & 10.56 & 790 & 789 & 0.13 & 3600 & 790 & 790 & 0 & 3025.83 \\
    I100 &   &   & 741 & 287.7 & 61.18 & 3600 & 741 & 741 & 0 & 0.52 & 742 & 741 & 0.13 & 3600 & 742 & 742 & 0 & 1884.51 \\
    \hline
    I101 &   &   & * & * & * & * & * & * & * & * & 1080 & 1077 & 0.28 & 3600 & 1085 & 719.6 & 33.68 & 3600 \\
    I102 &   &   & 996 & 349 & 64.96 & 3600 & 996 & 996 & 0 & 0.92 & 998 & 996 & 0.20 & 3600 & 1008 & 664.9 & 34.04 & 3600 \\
    I103 &   &   & 767 & 315 & 58.93 & 3600 & 767 & 767 & 0 & 1244.90 & 768 & 766.5 & 0.20 & 3600 & 768 & 768 & 0 & 2456.37 \\
    I104 &   &   & 830 & 317.9 & 61.70 & 3600 & 830 & 830 & 0 & 686.88 & 832 & 829.5 & 0.30 & 3600 & 831 & 586.3 & 29.45 & 3600 \\
    I105 & 25 & $\alpha_2$ & 853 & 336 & 60.61 & 3600 & 853 & 853 & 0 & 0.69 & 855 & 853 & 0.23 & 3600 & 859 & 573.1 & 33.28 & 3600 \\
    I106 &   &   & 973 & 325 & 66.60 & 3600 & 973 & 973 & 0 & 425.89 & 975 & 972.5 & 0.26 & 3600 & 975 & 650.8 & 33.25 & 3600 \\
    I107 &   &   & 887 & 373 & 57.95 & 3600 & 887 & 887 & 0 & 1.01 & 890 & 887 & 0.34 & 3600 & 891 & 595 & 33.22 & 3600 \\
    I108 &   &   & 737 & 302 & 59.02 & 3600 & 737 & 737 & 0 & 550.15 & 739 & 736.5 & 0.34 & 3600 & 738 & 558 & 24.39 & 3600 \\
    I109 &   &   & 977 & 363 & 62.85 & 3600 & 977 & 977 & 0 & 1038.33 & 978 & 976.5 & 0.15 & 3600 & 988 & 632.6 & 35.97 & 3600 \\
    I110 &   &   & 755 & 281 & 62.78 & 3600 & 755 & 755 & 0 & 3088.83 & 757 & 754.5 & 0.33 & 3600 & 757 & 576.9 & 23.79 & 3600 \\
    \hline
    I111 &   &   & * & * & * & * & * & * & * & * & 1028 & 1021 & 0.68 & 3600 & 1052 & 648.7 & 38.34 & 3600 \\
    I112 &   &   & * & * & * & * & * & * & * & * & 1227 & 1213 & 1.14 & 3600 & 1250 & 666.1 & 46.71 & 3600 \\
    I113 &   &   & * & * & * & * & * & * & * & * & 1123 & 1105.5 & 1.56 & 3600 & 1156 & 718.4 & 37.85 & 3600 \\
    I114 &   &   & * & * & * & * & * & * & * & * & 1187 & 1179.5 & 0.63 & 3600 & 1255 & 681.8 & 45.68 & 3600 \\
    I115 & 25 & $\alpha_3$ & * & * & * & * & * & * & * & * & 1229 & 1214.5 & 1.18 & 3600 & 1241 & 794.5 & 35.98 & 3600 \\
    I116 &   &   & * & * & * & * & * & * & * & * & 1123 & 1110.5 & 1.11 & 3600 & 1141 & 745.4 & 34.67 & 3600 \\
    I117 &   &   & * & * & * & * & * & * & * & * & 1054 & 1043 & 1.04 & 3600 & 1074 & 659.3 & 38.62 & 3600 \\
    I118 &   &   & 941 & 376 & 60.042508 & 3600 & 941 & 941 & 0 & 0.65 & 944 & 941 & 0.32 & 3600 & 953 & 634.8 & 33.39 & 3600 \\
    I119 &   &   & * & * & * & * & * & * & * & * & 1238 & 1227 & 0.89 & 3600 & 1237 & 780.9 & 36.87 & 3600 \\
    I120 &   &   & * & * & * & * & * & * & * & * & 1138 & 1132 & 0.53 & 3600 & 1159 & 758.9 & 34.52 & 3600 \\
    \hline
\end{tabular}
}
\label{tabM4}
\end{sidewaystable}


\end{document}